\newcommand{\N}{{\mathbb{N}}}
\newcommand{\Z}{{\mathbb{Z}}}
\newcommand{\ol}{\overline}
\newcommand{\uloopr}[1]{\ar@'{@+{[0,0]+(-4,5)}@+{[0,0]+(0,10)}@+{[0,0] +(4,5)}}^{#1}}
\newcommand{\uloopd}[1]{\ar@'{@+{[0,0]+(5,4)}@+{[0,0]+(10,0)}@+{[0,0]+ (5,-4)}}^{#1}}
\newcommand{\dloopr}[1]{\ar@'{@+{[0,0]+(-4,-5)}@+{[0,0]+(0,-10)}@+{[0, 0]+(4,-5)}}_{#1}}
\newcommand{\dloopd}[1]{\ar@'{@+{[0,0]+(-5,4)}@+{[0,0]+(-10,0)}@+{[0,0 ]+(-5,-4)}}_{#1}}
\newcommand{\dnw}{\mathbin{\downarrow}}
\newcommand{\luloop}[1]{\ar@'{@+{[0,0]+(-8,2)}@+{[0,0]+(-10,10)}@+{[0, 0]+(2,2)}}^{#1}}
\newtheorem{lem}{Lemma}[section]
\newtheorem{corol}[lem]{Corollary}
\newtheorem{theor}[lem]{Theorem}
\newtheorem{prop}[lem]{Proposition}
\newtheorem{rema}[lem]{Remark}
\newtheorem{defi}[lem]{Definition}
\newtheorem{exem}[lem]{Example}
\newtheorem{point}[lem]{}
\newtheorem*{nota}{Notation}
\newcommand{\les}{\leqslant}
\newcommand{\AC}[2]{G_{#1}[#2]}
\newcommand{\Ma}{{\rm Max}}
\DeclareMathOperator{\rL}{L}
\numberwithin{equation}{section}
\begin{document}
\title[Representing finitely generated refinement monoids]{Representing finitely generated refinement monoids as graph monoids}
\author{Pere Ara}
\address{Departament de Matem\`atiques, Universitat Aut\`onoma de Barcelona,
08193 Bellaterra (Barcelona), Spain.} \email{para@mat.uab.cat}
\author{Enrique Pardo}
\address{Departamento de Matem\'aticas, Facultad de Ciencias, Universidad de C\'adiz,
Campus de Puerto Real, 11510 Puerto Real (C\'adiz),
Spain.}\email{enrique.pardo@uca.es}\urladdr{https://sites.google.com/a/gm.uca.es/enrique-pardo-s-home-page/}

\thanks{Both authors are partially supported by the DGI-MINECO and European Regional Development Fund, jointly, through Project MTM2014-53644-P. The second author was partially supported by PAI III grant FQM-298 of the Junta de Andaluc\'{\i}a. } 
\subjclass[2010]{Primary 06F20, Secondary 16D70, 19K14, 20K20, 46L05, 46L55} 
\keywords{Refinement monoid, regular monoid, primely generated monoid, graph monoid.}
%

\maketitle

\begin{abstract} Graph monoids arise naturally in the study of non-stable K-theory of graph C*-algebras and Leavitt path
algebras. They play also an important role in the current approaches to the realization problem for von Neumann regular rings.
In this paper, we characterize when a finitely generated conical refinement monoid can be represented as a graph monoid.
The characterization is expressed in terms of the behavior of the structural maps of the associated $I$-system at the free primes of the
monoid.
\end{abstract}


\section{Introduction}\label{Introduction}

The class of commutative monoids satisfying the Riesz refinement property --refinement monoids for short-- has been largely studied over the last decades 
in connection with various problems such as non-stable K-Theory of rings and $C^*$-algebras (see e.g. \cite{Aposet, AGOP, AMFP, GPW, PaWe}), classification of Boolean 
algebras (see e.g. \cite{Ket}, \cite{Pierce}), or its own structure theory (see e.g. \cite{Brook, Dobb84, Whr3}).

An important 
invariant in non-stable K-theory is the commutative monoid $\mathcal V(R)$ associated to any ring $R$, consisting of the isomorphism classes of finitely generated projective (left, say) 
$R$-modules, with the operation induced from direct sum. If $R$ is a (von Neumann) regular ring or a C*-algebra with real rank zero (more generally, an exchange ring), 
then $\mathcal V(R)$ is a refinement monoid (e.g., \cite[Corollary 1.3, Theorem 7.3]{AGOP}). 

The \emph{realization problem} asks which refinement monoids appear as a $\mathcal V(R)$ for $R$ in 
one of the above-mentioned classes. Wehrung \cite{W98IsraelJ} constructed a conical refinement monoid of cardinality $\aleph_2$ which is not isomorphic to $\mathcal V(R)$ for any regular 
ring $R$, but it is an important open problem, appearing for the first time in \cite{directsum}, to determine whether every countable conical refinement monoid can be realized 
as $\mathcal V(R)$ for some regular ring $R$. See \cite{Areal} for a survey on this problem, and \cite{AGtrans} for some recent progress on the problem, with connections with the Atiyah Problem.

An interesting situation in which the answer to the realization problem is affirmative is the following:

\begin{theor}[{\cite[Theorem 4.2, Theorem 4.4]{AB}}]
\label{thm:AB-result}
Let $E$ be a row-finite graph, let $M(E)$ be its graph monoid, and let $K$ be any field. Then there exists a (not necessarily unital) von Neumann regular $K$-algebra $Q_K(E)$ such that $\mathcal V(Q_K(E))\cong M(E)$.
\end{theor}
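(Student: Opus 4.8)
The plan is to realise $Q_K(E)$ as a universal (Cohn) localization of the Leavitt path algebra $L_K(E)$ of $E$, chosen large enough to become von Neumann regular yet controlled enough that the localization map leaves the $\mathcal{V}$-monoid unchanged; since $\mathcal{V}(L_K(E)) \cong M(E)$ by the Ara--Moreno--Pardo computation, this yields $\mathcal{V}(Q_K(E)) \cong M(E)$. One may first reduce to the case that $E$ has no sinks by attaching an infinite tail at each sink: this passes to a graph $E'$ with $M(E') \cong M(E)$ (the former sink and the new tail vertices collapse to a single generator of the monoid), while $L_K(E)$ sits inside $L_K(E')$ as a full hereditary corner, so that a suitable corner of the algebra built for $E'$ will serve for $E$.

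Two structural facts drive the construction. First, $L_K(E)$ is hereditary: it is the universal localization of the path algebra $P := KE$ --- which is hereditary, being the path algebra of a quiver --- at the set $\Sigma_1$ of the morphisms $\mu_v \colon \bigoplus_{s(e)=v} r(e)P \to vP$, $(x_e)_e \mapsto \sum_e e x_e$, one for each regular vertex $v$; inverting each $\mu_v$ adjoins exactly the ghost edges (the components of $\mu_v^{-1}$) and turns $P$ into $L_K(E)$, and universal localizations of hereditary rings are again hereditary. Second, $L_K(E)$ carries a $\mathbb{Z}$-grading ($\deg v = 0$, $\deg e = 1$, $\deg e^* = -1$) whose degree-zero part $L_K(E)_0$ is an ultramatricial $K$-algebra, hence von Neumann unit-regular. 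Now let $\widehat{L_K(E)}$ be the completion of $L_K(E)$ with respect to this grading by formal series that are bounded below in degree, let $\Sigma$ be the set of square matrices over $L_K(E)$ that become invertible over $\widehat{L_K(E)}$, and set $Q_K(E) := \Sigma^{-1} L_K(E)$, together with its canonical homomorphism $L_K(E) \to Q_K(E)$, which one checks is injective.

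The crux is that $Q_K(E)$ is von Neumann regular. As a directed union of universal localizations of the hereditary ring $KE$ it is hereditary, so every finitely generated right ideal is projective; the point is to upgrade this to being a direct summand, equivalently to show that $\mathrm{coker}(f)$ is projective for every homomorphism $f$ between finitely generated free $Q_K(E)$-modules. Here the grading does the work: invertibility over $\widehat{L_K(E)}$ is detected by a condition on the lowest-degree homogeneous component, which reduces to invertibility of a matrix over the unit-regular algebra $L_K(E)_0$, and using this one rewrites $f$, after composing with matrices invertible over $\widehat{L_K(E)}$ and hence over $Q_K(E)$, in a form whose image is visibly a direct summand. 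Along the way one checks that $Q_K(E) \neq 0$, that the grading extends to $Q_K(E)$, and that the degree-zero part of $Q_K(E)$ remains (unit-)regular. I expect this to be the main obstacle: the delicate point is to choose $\Sigma$ at once rich enough to force regularity and sparse enough to preserve $\mathcal{V}$, and it is precisely the completion/grading bookkeeping that reconciles the two demands.

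Finally one checks that the inclusion induces an isomorphism $\mathcal{V}(L_K(E)) \cong \mathcal{V}(Q_K(E))$. Surjectivity is the general fact that every finitely generated projective module over a universal localization $\Sigma^{-1}R$ is extended from $R$. For injectivity, each matrix in $\Sigma$ has source and target of equal size, so inverting it imposes no relation in $K_0$ between distinct classes of finitely generated projectives; this can be made rigorous either via Malcolmson's criterion for universal localization, or via the vanishing of $\mathrm{Tor}^{L_K(E)}_{n}(Q_K(E), Q_K(E))$ for all $n \geq 1$ --- which holds because $L_K(E)$ is hereditary, so that $Q_K(E)$ is stably flat over it --- together with the long exact sequence in algebraic $K$-theory associated to such a localization. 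Combining the three steps, $Q_K(E)$ is von Neumann regular and $\mathcal{V}(Q_K(E)) \cong \mathcal{V}(L_K(E)) \cong M(E)$, which proves the theorem.
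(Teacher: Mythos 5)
The paper does not prove this statement: it is quoted from Ara--Brustenga \cite{AB}, so the only meaningful comparison is with the proof given there. Your outline does reconstruct the architecture of that proof: $Q_K(E)$ is indeed a universal localization of the hereditary algebra $KE$ (equivalently of $L_K(E)=\mu^{-1}(KE)$, $\mu=\{\mu_v\}$) at a set $\Sigma$ of square matrices that become invertible over a power-series-type completion; regularity is extracted from the $\Z$-grading and the unit-regularity of the degree-zero part; and the $\mathcal V$-computation rests on the fact that finitely generated projectives over a universal localization of a hereditary ring are induced. Two reservations. First, the von Neumann regularity of $Q_K(E)$ is the actual content of \cite[Theorem 4.2]{AB} and occupies most of that paper; your paragraph on it (``one rewrites $f$ \dots\ in a form whose image is visibly a direct summand'') is a statement of intent rather than an argument, as you yourself acknowledge, so that step cannot be counted as proved. (The sink-removal preliminary, incidentally, is harmless but unnecessary: the maps $\mu_v$ are only introduced at non-sinks.)

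Second, there is a genuine gap in the final step as written. You argue that inverting square matrices ``imposes no relation in $K_0$ between distinct classes of finitely generated projectives,'' invoking stable flatness and the localization long exact sequence in algebraic $K$-theory. That sequence controls $K_0$, not $\mathcal V$, and for graph algebras the natural map $\mathcal V(L_K(E))\to K_0(L_K(E))$ is badly non-injective: already for one vertex with two loops one has $M(E)=\{0,a\}$ with $a=2a$, while $K_0=G(M(E))=0$. So injectivity of $\mathcal V(L_K(E))\to\mathcal V(Q_K(E))$ cannot follow from any statement at the level of $K_0$. What is actually needed --- and what \cite{AB} uses --- is the Bergman--Dicks/Schofield description of the full monoid $\mathcal V(\Sigma^{-1}R)$, for $R$ hereditary, as the quotient of $\mathcal V(R)$ by the congruence identifying $[P_\sigma]$ with $[Q_\sigma]$ for each $\sigma\colon P_\sigma\to Q_\sigma$ in $\Sigma$. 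Since your $\Sigma$ consists of endomorphisms of finitely generated free modules, that congruence is trivial and $\mathcal V$ is preserved; applied instead to the maps $\mu_v$, the same theorem is what identifies $\mathcal V(L_K(E))$ with $M(E)$ in the first place. With that substitution, and with the regularity argument actually carried out, your outline matches the cited proof.
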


Thus, an intermediate step that could be helpful to give an answer to the realization problem is to characterize which conical refinement monoids are representable as graph monoids. 
The first author, Perera and Wehrung gave such a characterization in the concrete case of finitely generated antisymmetric refinement monoids  \cite[Theorem 5.1]{APW08}. These monoids 
are a particular case of primely generated refinement monoids (see e.g. \cite{Brook}). Recall that an element $p$ in a monoid $M$ is a {\it prime element} if $p$ is not invertible in $M$, and, whenever 
$p\leq a+b$ for $a,b\in M$, then either $p\leq a$ or $p\leq b$ (where $x\le y$ means that $y=x+z$ for some $z\in M$). The monoid $M$ is {\it primely generated} if every non-invertible element of $M$ 
can be written as a sum of prime elements. Primely generated refinement monoids enjoy important cancellation properties, such as separative cancellation and 
unperforation, as shown by Brookfield in \cite[Theorem 4.5, Corollary 5.11(5)]{Brook}. Moreover, it was shown by Brookfield that any finitely 
generated refinement monoid is automatically primely generated \cite[Corollary 6.8]{Brook}.

In \cite{AP}, the authors of the present paper showed that any primely generated refinement monoid can be represented, 
up to isomorphism, as the monoid associated to an $I$-system (a sort of semilattice of cancellative semigroups defined over a suitable 
poset $I$). This result generalizes the representation of two well-known classes of monoids:
\begin{enumerate}
\item The class of {\it primitive monoids}, i.e. antisymmetric
primely generated refinement monoids, see \cite{Pierce}. These monoids are described by means of a set $I$ endowed with an antisymmetric transitive relation. 
\item The class of primely generated conical {\it regular} refinement monoids.
These monoids were characterized by Dobbertin in \cite{Dobb84} in terms of partial orders of abelian groups.
\end{enumerate}

In the present paper, we will benefit of the picture developed in \cite{AP} to state a characterization of monoids representable 
as graph monoids, in the case of finitely generated conical refinement monoids (see Theorem \ref{thm:main}). The condition, that relies 
on the behavior of free primes in the representation by $I$-systems, generalizes \cite[Theorem 5.1]{APW08}.  

A main device used to obtain realization results for refinement monoids by regular rings or C*-algebras of real rank zero has been the consideration of algebras
associated to graphs. Of particular importance has been the use of graph C*-algebras (see e.g. \cite{KPR}, \cite{KPRR}) and of Leavitt path algebras (see e.g. \cite{AA1},
\cite{AAS}, \cite{AMFP}). Indeed, the von Neumann regular algebra $Q_K(E)$ appearing in Theorem \ref{thm:AB-result} is a specific universal localization of the Leavitt path 
algebra $L_K(E)$ associated to the row-finite graph $E$. Some other algebras associated to graphs are also important in further constructions of von Neumann regular rings,
as the first author showed in \cite{Aposet}. In that paper, given any finite poset $\mathbb P$, a von Neumann regular algebra $Q_K(\mathbb P)$ is constructed so that 
$\mathcal V (Q_K( \mathbb P))\cong M(\mathbb P)$, where $M(\mathbb P )$ is the primitive monoid with associated poset of primes $\mathbb P$ and with all primes being free.
In the present paper, Leavitt path algebras play an instrumental role, being essential in determining the condition which characterizes finitely generated graph monoids (see 
Proposition \ref{thm:free-charac}). It is interesting to observe that, as shown in that Proposition, the obstruction to realize an arbitrary finitely generated conical 
refinement monoid as a graph monoid is of K-theoretical nature. 
The solution of this K-theoretical problem was one of the keys to obtain the main result of \cite{Aposet}
(see \cite[Theorem 3.2]{Aposet}), and will
surely play a vital role in the forthcoming approaches to the resolution of the realization problem for finitely generated refinement monoids.

\smallskip

The paper is organized as follows. In Section \ref{Sect:Basics}, we recall all the definitions and results that will be necessary to follow the contents of 
the subsequent sections. In Section \ref{Sect:ReprFinGen}, we study the special case of finitely generated \emph{regular} refinement monoids, and we prove that 
every finitely generated conical regular refinement monoid can be represented as a graph monoid (Theorem \ref{Th:FinGenRegRefAreRepr}). 
In the short Section \ref{Sect:Example}, we use the techniques developed in Section \ref{Sect:ReprFinGen} to offer an easy presentation of the monoid $M=   \Z^+ \cup \{\infty \}$ as a graph monoid. 
This was done in a somewhat more involved way in \cite[Example 6.5]{APW08}. In Section \ref{TopFree}, we prove the main result of the paper, obtaining the characterization of the finitely generated conical 
refinement monoids which are graph monoids. The result is stated in terms of the theory of $I$-systems established in \cite{AP}, and concerns the behavior of the maps associated to the $I$-system at 
the free primes (see Theorem \ref{thm:main}). Finally we use our main result to recover the characterization of finitely generated primitive graph monoids obtained in \cite{APW08}.


\section{Preliminaries}\label{Sect:Basics}

In this section, we will recall the definitions and results necessary to follow the contents of the paper in a self-contained way. We divide this section in four parts.


\subsection{Basics on commutative monoids.}
All semigroups and monoids considered in this paper are commutative. We will denote by $\N$ the semigroup of positive integers, and by $\Z^+$ the monoid of non-negative integers.  

Given a commutative monoid $M$, we set $M^*:=M\setminus\{0\}$. We say that $M$ is {\it conical} if $M^*$ is a semigroup, that is, if, for all $x$, $y$ in $M$, $x+y=0$ only when $x=y=0$. 

Given a monoid $M$, the antisymmetrization $\ol{M}$  of $M$ is the quotient monoid 
of $M$ by the congruence given by $x\equiv y$ if and only if $x\leq y$ and $y\leq x$ (see \cite[Notation 5.1]{Brook}). We will denote the class of an element $x$ of $M$ in $\ol{M}$ by $\ol{x}$.

We say that a monoid $M$ is {\it separative} 
provided $2x=2y=x+y$ always implies $x=y$; there are a number of equivalent formulations of this property, see e.g. \cite[Lemma 2.1]{AGOP}.
We say that  $M$ is
a {\it refinement monoid} if, for all $a$, $b$, $c$, $d$ in
$M$ such that $a+b=c+d$, there exist $w$, $x$, $y$, $z$ in $M$ such
that $a=w+x$, $b=y+z$, 
$c=w+y$ and $d=x+z$.  It will often be convenient to present this
situation in the form of a diagram, as follows:
$$\mbox{\begin{tabular}{|l|l|l|}
\cline{2-3}
\multicolumn{1}{l|}{} & ${c}$ & ${d}$ \\ \hline
${a}$ & ${w}$ & ${x}$ \\ \hline
${b}$ & ${y}$ & ${z}$ \\ \hline
\end{tabular}}.$$  
A basic example of refinement monoid is the monoid $M(E)$ associated to a countable row-finite graph $E$ \cite[Proposition 4.4]{AMFP}.

If $x, y\in M$, we write $x\leq y$ 
if there exists $z\in M$  such that $x+z = y$.
Note that $\le$ is a translation-invariant pre-order on $M$, called the {\it algebraic pre-order} of $M$. All inequalities in commutative monoids will be with respect to this pre-order. 
An element $p$ in a monoid $M$ is a {\it prime element} if $p$ is not invertible in $M$, and, whenever 
$p\leq a+b$ for $a,b\in M$, then either $p\leq a$ or $p\leq b$. The monoid $M$ is {\it primely generated} if every non-invertible element of $M$ 
can be written as a sum of prime elements.
 
An element $x\in M$ is {\it regular} if $2x\leq x$. An element $x\in M$ is an {\it idempotent} if $2x= x$. An element 
$x\in M$ is {\it free} if $nx\leq mx$ implies $n\leq m$. Any element of a separative monoid is either free or regular.
In particular, this is the case for any primely generated refinement monoid, by \cite[Theorem 4.5]{Brook}.

A subset $S$ of a monoid $M$ is called an {\it order-ideal} if $S$ is a subset of $M$ containing $0$,
closed under taking sums and summands within $M$.  An order-ideal can also be described as a submonoid $I$ of
$M$, which is hereditary with respect to the canonical pre-order
$\le $ on $M$: $x\le y$ and $y\in I$ imply $x\in I$. A non-trivial monoid is said to be {\it simple} if it has no non-trivial order-ideals.

If $(S_k)_{k\in \Lambda}$ is a family of (commutative) semigroups, $\bigoplus _{k\in \Lambda} S_k$ (resp. $\prod _{k\in \Lambda} S_k$) 
stands for the coproduct (resp. the product) of the semigroups $S_k$, $k\in \Lambda$, in the category of commutative
semigroups. If the semigroups $S_k$ are subsemigroups of a semigroup $S$, we will denote by $\sum_{k\in \Lambda} S_k$ the subsemigroup of $S$ generated by $\bigcup_{k\in \Lambda}S_k$.
Note that $\sum_{k\in \Lambda} S_k$ is the image of the canonical map $\bigoplus_{k\in \Lambda} S_k\to S$. We will use the notation $\langle X\rangle $ to denote the semigroup
generated by a subset $X$ of a semigroup $S$.

Given a semigroup $M$, we will denote by $G(M)$ the Grothendieck group of $M$. There exists a semigroup homomorphism $\psi_M\colon M\to G(M)$
such that for any semigroup homomorphism $\eta \colon M\to H$ to a group $H$ there is a unique group homomorphism $\widetilde{\eta}\colon G(M)\to H$ such that 
$\widetilde{\eta}\circ \psi_M= \eta$. $G(M)$ is abelian and it is generated as a group by $\psi (M)$.  If $M$ is already a group then $G(M)= M$. If $M$ is a semigroup of the form 
$\N\times G$, where $G$ is an abelian group, then $G(M)= \Z\times G$. In this case, we will view $G$ as a subgroup of $\Z\times G$ by means of the identification $g\leftrightarrow (0,g)$.

Let $M$ be a conical commutative monoid, and let $x\in M$ be any element. The {\it archimedean component} of $M$ generated by $x$ is the subsemigroup
$$G_M[x]:=\{a\in M : a\leq nx \text{ and } x\leq ma \text{ for some } n,m\in \N\}.$$

For any $x\in M$, $G_M[x]$ is a simple semigroup. If $M$ is separative, then $G_M[x]$ is a cancellative semigroup; if moreover $x$ is a regular element, 
then $G_M[x]$ is an abelian group.


\subsection{Primely generated refinement monoids}
\label{subsec:Primely-gen}

The structure of primely generated refinement monoids has been recently described in \cite{AP}. We recall here some basic facts.

Given a poset $(I, \leq)$, we say that a subset $A$ of $I$ is a {\it lower set} if $x\leq y$ in $I$ and $y\in A$ implies $x\in A$. For any $i\in I$, 
we will denote by $I\downarrow i=\{x\in I : x\leq i\}$ the lower subset generated by $i$. 
We will write $x<y$ if $x\le y$ and $x\ne y$. 

The following definition \cite[Definition 1.1]{AP} is crucial for this work:

\begin{defi}[{\cite[Definition 1.1]{AP}}]
\label{def:I-system} {\rm Let $I= (I,\le )$ be a poset.  An {\it $I$-system} $$\mathcal{J}=
\left(I, \leq , (G_i)_{i\in I}, \varphi_{ji} \, (i<j)\right)$$ is given by the following data:
\begin{enumerate}
\item[(a)] A partition
$I=I_{free}\sqcup I_{reg}$ (we admit one of the two sets
$I_{free}$ or $I_{reg}$ to be empty).
\item[(b)] A family $\{G_i\}_{i\in I}$ of abelian groups. We adopt the following notation: 
\begin{itemize}
\item[(1)] For $i\in I_{reg}$, set $M_i = G_i$, and $\widehat{G}_i=G_i=M_i$.
\item[(2)] For $i\in I_{free}$, set $M_i=\N \times G_i$, and $\widehat{G}_i= \Z\times G_i$
\end{itemize}
Observe that, in any case, $\widehat{G}_i$ is the Grothendieck group of $M_i$.
\item[(c)]
A family of  semigroup homomorphisms $\varphi _{ji}\colon M_i\to
G_j$ for all $i<j$, to which we associate, for all $i<j$, the unique extension $\widehat{\varphi}_{ji}\colon \widehat{G}_i \to G_j$ of $\varphi _{ji}$ to a group homomorphism
from the Grothendieck group of $M_i$  to $G_j$ (we look at these
maps as maps from $\widehat{G}_i$ to $\widehat{G}_j$). We require that the family $ \{ \varphi_{ji} \}$ satisfies the following conditions:
\begin{itemize}
\item[(1)]  The assignment
$$
\left\{
\begin{array}{ccc}
i & \mapsto  &  \widehat{G}_i   \\
 (i<j) & \mapsto  &   \widehat{\varphi}_{ji}
\end{array}
\right\}
$$ 
defines a functor from the
category $I$ to the category of abelian groups (where we set $\widehat{\varphi}_{ii}= {\rm  id}_{\widehat{G}_i}$
for all $i\in I$).
\item[(2)]  For each $i\in I_{free}$ we have that the map
$$\bigoplus _{k<i} \varphi _{ik}\colon  \bigoplus _{k<i}  M_k \to G_i$$
is surjective.
\end{itemize}
\end{enumerate}
We say that an $I$-system $\mathcal  J =  \left(I, \leq , (G_i)_{i\in I}, \varphi_{ji}\, (i<j)\right)$ is {\it finitely generated} 
in case $I$ is a finite poset and all the groups $G_i$ are finitely generated.} 
\end{defi}

\begin{exem}
  \label{exam:Easy-I-system} {\rm We present a family of $I$-systems, where $I$ is a fixed poset.  
  Let $$I= \{p,q_1,q_2,\dots , q_r \},$$ where $p>q_i$ for all $i$, and all $q_i$ are pairwise incomparable.  
We set $I= I_{{\rm free}}$. Since $q_i$ are minimal free primes we must have $G_{q_i}=\{e_{q_i}\}$. To complete the definition of the $I$-system  $\mathcal J$ we only need an
abelian group $G_p$ and semigroup homomorphisms  $\varphi_{p,q_i}\colon \N  \to G_p$ such that $\bigoplus _{i=1}^r \varphi_{p,q_i}\colon \bigoplus_{i=1}^r \N  \to G_p$ is surjective. We distinguish two cases:
\begin{enumerate}
\item If $r=1$, then $G_p$ must be a finite cyclic group. 
\item If $r>1$, then we can take $G_p$ of the form $\Z^s \oplus \Z_{k_1}\oplus \cdots \oplus  \Z_{k_{r-s}} $, where $0\le s< r$, $1\le k_j$ for $1\le j\le r-s$,  
and where $\varphi_{p,q_i}(1)= (0,\dots,0,1,0,\dots ,0)$, with a $1$ in the $i$-th position, for
$1\le i\le s$, and $\varphi_{p,q_i}(1)= (-1,-1,\dots ,-1, \ol{0},\dots , \ol{0},\ol{1},\ol{0},\dots ,\ol{0})$, where there are $s$ $-1$'s, and where $\ol{1}\in \Z_{k_{i-s}}$, for $s<i\le r$. 
\end{enumerate}} 
\end{exem}

To every $I$-system $\mathcal J$ one can associate a primely generated conical refinement monoid $M(\mathcal J)$, and conversely to any primely generated conical refinement monoid $M$, we can associate
an $I$-system $\mathcal J$ such that $M\cong M(\mathcal J)$, see Sections 1 and 2 of \cite{AP} respectively.

Given a poset $I$, and an $I$-system $\mathcal{J}$, we construct a semilattice of groups based on the partial order 
of groups $(I,  \le, \widehat{G}_i)$, by following the model introduced in  \cite{Dobb84}. Let $A(I)$ be
the semilattice (under set-theoretic union) of all the finitely generated lower subsets
of $I$. These are precisely the lower subsets $a$ of $I$ such that the set $\Ma (a)$ of maximal elements of $a$ is finite and each element of $a$ is under some of the maximal ones.
In case $I$ is finite, and since the intersection of lower subsets of $I$ is again a lower subset, $A(I)$ is a lattice. 
For any $a\in A(I)$, we define $\widehat{H}_a= \bigoplus _{i\in a}
\widehat{G}_i$, and we define $f^b_a$ $(a\subseteq b)$ to be the canonical
embedding of $\widehat{H}_a$ into $\widehat{H}_b$. Given $a\in A(I), i\in a$ and $ u\in \widehat{G}_i$, we define $\chi (a,i,u)\in \widehat H _a$ by 
$$\chi (a,i,u)_j=
\left\{
\begin{array}{cc}
  u   & \mbox{if }j=i,  \\
   0_j  &   \mbox{if }j\ne i. 
\end{array}
\right.
$$
Let $U_a$ be the subgroup of $\widehat{H}_a$
generated by the set 
$$\{\chi (a,i,u) -\chi (a,j,\widehat{\varphi}_{ji}(u)) :  i<j\in \Ma (a), u\in \widehat{G}_i\}.$$

Now, for any $a\in A(I)$, set $\widetilde{G}_a= \widehat{H}_a/U_a$, and let $\Phi_a:\widehat{H}_a\rightarrow \widetilde{G}_a$ be the natural onto map. 
Then, for any $a\subseteq b\in A(I)$ we have that $f_a^b(U_a)\subseteq U_b$, so that there exists a unique homomorphism $\widetilde{f}_a^b: \widetilde{G}_a\rightarrow \widetilde{G}_b$ which makes the diagram 
$$
\xymatrix{\widehat{H}_a\ar[r]^{f_a^b} \ar[d]_{\Phi_a} & \widehat{H}_b\ar[d]^{\Phi_b}\\
\widetilde{G}_a \ar[r]_{\widetilde{f}_a^b}  & \widetilde{G}_b}
$$
commutative. Hence,  $(A(I), (\widetilde{G}_a)_{a\in A(I)}, \widetilde{f}_a^b (a\subset b))$ is a semilattice of groups. Thus, the set 
$$\widetilde{M}(\mathcal J ):=\bigsqcup\limits_{a\in A(I)}\widetilde{G}_a,$$ endowed with the operation 
$x+y:= \widetilde{f}_a^{a\cup b}(x)+\widetilde{f}_b^{a\cup b}(y)$ for any $a,b\in A(I)$ and any $x\in \widetilde{G}_a, y\in \widetilde{G}_b$, 
is a primely generated regular refinement monoid by \cite[Proposition 1]{Dobb84}. Note that
$\widehat{H}_{\emptyset}=\widetilde{G}_{\emptyset}=\{ 0 \}$. We refer the reader to \cite{Dobb84} for further details on this
construction.

Let $H_a$ be
the subsemigroup of $\widehat{H}_a$ defined by
$$H_a= \left\{ (z_i)_{i\in a} \in \widehat{H}_a : z_i\in \left\{ 
\begin{array}{ccc}
 \N\times G_i  & \text{ for }  &  i\in \Ma
(a)_{{\rm free}} \\
 \{ (0,0_i) \} \cup (\N\times G_i) & \text{ for }  &  i\in a_{{\rm free}}\setminus \Ma (a)_{{\rm free}} 
\end{array}
\right.
 \right\}.
$$

In what follows, whenever $i<j \in I$ with $j$ a free element, $x= (n,g)\in \N \times G_j$ and $y\in M_i$,
we will see $x+\varphi _{ji}(y)$ as the element $(n, g+\varphi _{ji}(y))\in \N\times G_j$. This is coherent with our identification of $G_j$ as
the subgroup $\{0\} \times G_j$ of $\widehat{G}_j= \Z\times G_j$. 

By \cite[Lemma 1.3]{AP}, we can define a semilattice of semigroups
$$(A(I), (H_a)_{a\in A(I)}, f_a^b (a\subset b)).$$ 
Now, we construct a monoid associated to it. For this, consider the congruence $\sim $ defined on $H_a$, for $a\in A(I)$, given by
$$x\sim y \iff x-y\in U_a .$$

\begin{lem} [{\cite[Lemma 1.4]{AP}}]
\label{lem:equivrels} Let $a\in A(I)$. The congruence $\sim $ on
$H_a$ agrees with the congruence $\equiv$, generated by the pairs $(x+\chi
(a,i,\alpha), x+\chi (a,j, \varphi _{ji}(\alpha) ))$, for $x\in
H_a$,  $i<j\in \Ma (a)$ and $\alpha \in M_i$.
\end{lem}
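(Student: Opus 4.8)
The plan is to prove Lemma~\ref{lem:equivrels} by showing the two congruences $\sim$ and $\equiv$ on $H_a$ coincide, via a double inclusion. Recall $x\sim y$ means $x-y\in U_a$, where $U_a$ is the subgroup of $\widehat H_a$ generated by the elements $\chi(a,i,u)-\chi(a,j,\widehat\varphi_{ji}(u))$ with $i<j\in\Ma(a)$ and $u\in\widehat G_i$; and $\equiv$ is the monoid congruence on $H_a$ generated by the pairs $(x+\chi(a,i,\alpha),\,x+\chi(a,j,\varphi_{ji}(\alpha)))$ with $x\in H_a$, $i<j\in\Ma(a)$, $\alpha\in M_i$. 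Note the asymmetry to keep track of: the generators of $U_a$ use arbitrary $u$ in the \emph{group} $\widehat G_i$, and $i<j$ ranges over maximal elements only, whereas the generating pairs of $\equiv$ use $\alpha$ in the \emph{semigroup} $M_i$ (and a translation term $x\in H_a$).

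First I would prove $\equiv\,\subseteq\,\sim$. Since $\sim$ is already a congruence on the semigroup $H_a$ (it is the restriction of the coset relation of the subgroup $U_a\le\widehat H_a$, which is a group congruence, to the subsemigroup $H_a$), it suffices to check that each generating pair of $\equiv$ lies in $\sim$. But $(x+\chi(a,i,\alpha))-(x+\chi(a,j,\varphi_{ji}(\alpha)))=\chi(a,i,\alpha)-\chi(a,j,\varphi_{ji}(\alpha))$ in $\widehat H_a$, and since $\alpha\in M_i\subseteq\widehat G_i$ and $\widehat\varphi_{ji}$ extends $\varphi_{ji}$, this is precisely one of the generators of $U_a$ (with $u=\alpha$). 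Hence the pair is in $\sim$, and so $\equiv\,\subseteq\,\sim$.

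The reverse inclusion $\sim\,\subseteq\,\equiv$ is the substantive direction and the main obstacle, because a generic element of $U_a$ is a $\Z$-linear combination of generators (with both signs and with arbitrary group elements $u\in\widehat G_i$), while $\equiv$ is built only from ``positive'' moves with semigroup elements $\alpha\in M_i$. The strategy is: suppose $x,y\in H_a$ with $x-y\in U_a$; I want $x\equiv y$. Write $x-y=\sum_{t}\varepsilon_t\big(\chi(a,i_t,u_t)-\chi(a,j_t,\widehat\varphi_{j_ti_t}(u_t))\big)$ with $\varepsilon_t\in\{\pm1\}$, $i_t<j_t\in\Ma(a)$, $u_t\in\widehat G_{i_t}$. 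The key reductions are: (i) each $u_t\in\widehat G_{i_t}$ can be written as $\alpha_t-\beta_t$ with $\alpha_t,\beta_t\in M_{i_t}$ (for $i_t\in I_{reg}$, $M_{i_t}=\widehat G_{i_t}$ is already a group; for $i_t\in I_{free}$, $M_{i_t}=\N\times G_{i_t}$ has Grothendieck group $\widehat G_{i_t}=\Z\times G_{i_t}$, so any element is a difference of two elements of $M_{i_t}$); this lets me replace arbitrary-$u$ generators by differences of $\alpha$-type generators. (ii) A sign $\varepsilon_t=-1$ on a generator $\chi(a,i,\alpha)-\chi(a,j,\varphi_{ji}(\alpha))$ can be absorbed by moving in the opposite direction, i.e. using that $\equiv$ is symmetric so the pair $(x+\chi(a,j,\varphi_{ji}(\alpha)),x+\chi(a,i,\alpha))$ is also a valid $\equiv$-move. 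So after these reductions $x-y$ is a $\Z_{\ge0}$-combination of expressions $\chi(a,i,\alpha)-\chi(a,j,\varphi_{ji}(\alpha))$, $i<j\in\Ma(a)$, $\alpha\in M_i$.

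Finally I would run a telescoping/chaining argument: enumerate the terms $\chi(a,i_1,\alpha_1)-\chi(a,j_1,\varphi_{j_1i_1}(\alpha_1)),\dots$ in the positive combination and add them one at a time to $y$, at each stage invoking a single generating pair of $\equiv$ with the appropriate translation element $x\in H_a$ equal to the current partial sum; the point to verify is that all intermediate expressions genuinely lie in $H_a$ (not just in $\widehat H_a$), which is where the precise definition of $H_a$ — allowing the extra zero component $(0,0_i)$ at non-maximal free $i$, and requiring a genuine $\N$-component only at maximal free $i$ — is used, together with the fact that $j\in\Ma(a)$ so adding $\varphi_{ji}(\alpha)\in G_j$ keeps the $j$-component inside $\N\times G_j$. (If one chooses an order in which negative moves are applied only when there is ``room'', i.e. after the corresponding positive contributions have been added, the intermediate expressions stay in $H_a$; this bookkeeping can be streamlined by first observing that since $x,y\in H_a$ one may split off and match up the generator contributions componentwise at each maximal free index.) Combining the two inclusions gives $\sim\,=\,\equiv$, as claimed; I expect essentially this is how \cite[Lemma~1.4]{AP} is proved, and I would simply cite and recall that argument here.
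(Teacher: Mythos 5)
The paper does not actually prove this lemma --- it is quoted verbatim from \cite[Lemma 1.4]{AP} with no argument given --- so there is no in-paper proof to compare against; I can only assess your sketch on its merits. The inclusion $\equiv\,\subseteq\,\sim$ is correct and complete as you state it, and your two reduction devices (writing $u\in\widehat G_i$ as $\alpha-\beta$ with $\alpha,\beta\in M_i$, and using bilinearity of the generators in $u$ to absorb signs) are exactly the right preprocessing. The gap is in the sentence ``after these reductions $x-y$ is a $\Z_{\ge0}$-combination of expressions $\chi(a,i,\alpha)-\chi(a,j,\varphi_{ji}(\alpha))$, $\alpha\in M_i$.'' That is false: such a non-negative combination has, at every non-maximal free index $i$, an $i$-component whose $\Z$-coordinate is $\ge 0$, whereas $x-y\in U_a$ can perfectly well have negative $\Z$-coordinate there (already for $I=\{i<j\}$ both free with trivial groups, $x=(0,1)$, $y=(5,1)$ gives $x-y=(-5,0)\in U_a$). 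What the reductions actually give is $x-y=A-B$ with $A$ and $B$ two non-negative combinations of such expressions, and your subsequent parenthetical about ordering ``negative moves'' so that there is ``room'' is precisely where the unproved content sits: you would need to check, at each non-maximal free index, that the intermediate $i$-components never leave $\{(0,0_i)\}\cup(\N\times G_i)$, including the exceptional case where they must land exactly on $(0,0_i)$.

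The repair is short and makes the ordering discussion unnecessary. Observe that for \emph{any} $z\in H_a$, any $i<j\in\Ma(a)$ and any $\alpha\in M_i$, the element $w:=z-\chi(a,j,\varphi_{ji}(\alpha))$ lies in $H_a$: only the $j$-component changes, $j$ is maximal, and subtracting an element of $G_j$ (which sits in $\{0\}\times G_j$ when $j$ is free) does not alter the $\N$-coordinate. Hence $z=w+\chi(a,j,\varphi_{ji}(\alpha))\equiv w+\chi(a,i,\alpha)=z+\chi(a,i,\alpha)-\chi(a,j,\varphi_{ji}(\alpha))$ is an \emph{unconditionally} applicable move, and iterating it gives $z\equiv z+C$ for every non-negative combination $C$ of the expressions $\chi(a,i,\alpha)-\chi(a,j,\varphi_{ji}(\alpha))$. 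Writing $x-y=A-B$ as above, one gets $x\equiv x+B$ and $y\equiv y+A$, while $x+B=y+A$ in $\widehat H_a$ (and this common element visibly lies in $H_a$); therefore $x\equiv y$. With this observation inserted in place of the positive-combination claim, your proof is correct.
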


\begin{corol}[{\cite[Corollary 1.5]{AP}}]
\label{cor:NouSubmonoid}
For every $a\in A(I)$, $M_a:=H_a/{\sim} = H_a/{\equiv} $ is a submonoid of $\widetilde{G}_a$.
\end{corol}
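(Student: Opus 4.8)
The plan is to deduce the statement directly from Lemma~\ref{lem:equivrels} together with the definition of $\widetilde{G}_a = \widehat{H}_a/U_a$ and the construction of $\widetilde{M}(\mathcal J)$. First I would observe that the equality $H_a/{\sim} = H_a/{\equiv}$ is exactly the content of Lemma~\ref{lem:equivrels}: the congruence $\sim$ on $H_a$, defined by $x\sim y \iff x-y\in U_a$, coincides with the congruence $\equiv$ generated by the pairs $(x+\chi(a,i,\alpha),\, x+\chi(a,j,\varphi_{ji}(\alpha)))$ for $x\in H_a$, $i<j\in\Ma(a)$, $\alpha\in M_i$. So it remains to identify $H_a/{\sim}$ with a submonoid of $\widetilde{G}_a = \widehat{H}_a/U_a$.

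The key step is to consider the composite $H_a \hookrightarrow \widehat{H}_a \xrightarrow{\Phi_a} \widetilde{G}_a$, where the first map is the inclusion of the subsemigroup $H_a$ into the group $\widehat{H}_a$ and $\Phi_a$ is the natural quotient map with kernel $U_a$. This composite is a semigroup (indeed monoid) homomorphism, since $0\in H_a$ (the element with all coordinates $0$ lies in $H_a$, as $0_i$ is allowed in each non-maximal free coordinate and, for maximal free $i$, one should check $H_a$ is nonempty/contains the relevant identity — more precisely $\widetilde{G}_a$ is a monoid and $\Phi_a$ sends $0$ to $0$). The congruence on $H_a$ induced by this homomorphism is precisely $\sim$: for $x,y\in H_a$ we have $\Phi_a(x)=\Phi_a(y)$ iff $x-y\in U_a$ iff $x\sim y$. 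Therefore $H_a/{\sim}$ is isomorphic (as a monoid) to the image $\Phi_a(H_a)$, which is a submonoid of $\widetilde{G}_a$. Combining with Lemma~\ref{lem:equivrels} gives $M_a := H_a/{\sim} = H_a/{\equiv}$ and realizes it as a submonoid of $\widetilde{G}_a$.

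I expect the only real subtlety — and the point I would be careful about — is checking that the map $H_a/{\sim}\to\widetilde{G}_a$ is \emph{injective}, i.e. that $\sim$ is genuinely the pullback of the trivial congruence on $\widetilde{G}_a$ rather than something coarser or finer. By definition $\sim$ is literally $\{(x,y)\in H_a\times H_a : x-y\in U_a\}$, and $\Phi_a(x)=\Phi_a(y) \iff \Phi_a(x-y)=0 \iff x-y\in U_a$ (the subtraction taking place in the ambient group $\widehat{H}_a$), so injectivity is immediate from the definitions; the identification with $\equiv$ is what required the work, and that is already done in Lemma~\ref{lem:equivrels}. A secondary point is that $H_a$ is closed under the monoid operation of $\widehat{H}_a$ (so that $\Phi_a(H_a)$ is actually a submonoid and not merely a subset closed under addition up to the relation); this is part of the statement that $(A(I),(H_a)_a,f_a^b)$ is a semilattice of semigroups, established via \cite[Lemma 1.3]{AP}, which I may invoke. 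With these remarks the proof is essentially a one-line assembly of the preceding results.
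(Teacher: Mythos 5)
Your argument is correct and is exactly the intended one: the paper gives no proof here (the result is quoted from \cite[Corollary~1.5]{AP}), but the content is precisely that $\sim$ is by definition the congruence induced on $H_a$ by the quotient map $\Phi_a\colon\widehat{H}_a\to\widehat{H}_a/U_a=\widetilde{G}_a$, so $H_a/{\sim}\cong\Phi_a(H_a)$ sits inside $\widetilde{G}_a$, and the identification with $\equiv$ is Lemma~\ref{lem:equivrels}. One small correction to your parenthetical: the all-zero tuple does \emph{not} lie in $H_a$ when $a$ has a maximal free element (those coordinates are required to lie in $\N\times G_i$ with $\N$ the positive integers), and in that case $\Phi_a(H_a)$ misses the neutral element of $\widetilde{G}_a$ altogether; so ``submonoid'' here must be read loosely as ``subsemigroup,'' a convention the paper follows elsewhere (e.g.\ in Lemma~\ref{lem:GrotofM}), and this does not affect the embedding argument.
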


\begin{defi}
\label{def:MJ} {\rm Given an $I$-system $\mathcal J=(I,\le, G_i,\varphi _{ji} (i<j))$, we denote by
$M(\mathcal J)$ the set $\bigsqcup _{a\in A(I)} M_a$. By \cite[Lemma 1.3]{AP} and 
Corollary \ref{cor:NouSubmonoid}, $M(\mathcal J)$  is a submonoid of $\widetilde{M}(\mathcal J)$.}
\end{defi}

Observe that Lemma \ref{lem:equivrels} gives:

\begin{corol} [{\cite[Corollary 1.6]{AP}}]
\label{cor:presentationofM} $M(\mathcal{J})$ is the
monoid generated by $M_i$, $i\in I$, with respect to the defining
relations
$$x+y= x+\varphi _{ji}(y), \quad i<j, \, x\in M_j,\,  y\in M_i.$$
\end{corol}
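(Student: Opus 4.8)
The plan is to identify $M(\mathcal{J})$ with the monoid $P$ defined by the stated presentation, by building a comparison homomorphism $\Psi\colon P\to M(\mathcal{J})$ and then producing a left inverse for it; throughout I would use the identification $M_a=H_a/{\equiv}$ of Corollary~\ref{cor:NouSubmonoid} and the explicit generators of $\equiv$ given by Lemma~\ref{lem:equivrels}. For $i\in I$ set $\psi_i\colon M_i\to M(\mathcal{J})$, $\psi_i(x)=[\chi(I\downarrow i,i,x)]\in M_{I\downarrow i}$, and let $\iota_i\colon M_i\to P$ be the structural maps of the presentation. One first checks that the $\psi_i$ satisfy the defining relations of $P$: for $i<j$, $x\in M_j$, $y\in M_i$, the containment $I\downarrow i\subseteq I\downarrow j$ places $\psi_j(x)+\psi_i(y)$ and $\psi_j(x+\varphi_{ji}(y))$ both in $M_{I\downarrow j}$, and there their equality unwinds to the membership $\chi(I\downarrow j,i,y)-\chi(I\downarrow j,j,\widehat{\varphi}_{ji}(y))\in U_{I\downarrow j}$, which holds because $j$ is the unique maximal element of $I\downarrow j$, so that this difference is one of the defining generators of $U_{I\downarrow j}$. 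By the universal property of the presentation we thus obtain a monoid homomorphism $\Psi\colon P\to M(\mathcal{J})$ with $\Psi\circ\iota_i=\psi_i$ for every $i$.

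Next I record two normal-form observations. Since $M_a=H_a/{\equiv}$ and every $(z_i)_{i\in a}\in H_a$ is the sum of the $\chi(a,i,z_i)$, each element of $M_a$ is a finite sum of elements of the form $\psi_i(z)=\Psi(\iota_i(z))$, $z\in M_i$; hence $\Psi$ is surjective. On the other side, iterating the defining relation $\iota_j(x)+\iota_i(w)=\iota_j(x+\varphi_{ji}(w))$ of $P$ — each generator occurring at a non-maximal index being pushed, along a chain of $I$, up to a maximal one — one rewrites an arbitrary $p\in P$ in the normal form $p=\sum_{i\in\Ma(a)}\iota_i(z_i)$ with $z_i\in M_i$, where $a\in A(I)$ is the lower set generated by the maximal indices occurring in some word for $p$.

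For injectivity I would fix, for each $a\in A(I)$, a retraction $r_a\colon a\to\Ma(a)$ (so $i\le r_a(i)$ for all $i\in a$, and $r_a$ fixes $\Ma(a)$ pointwise), and define $h_a\colon H_a\to P$ by $h_a((z_i)_{i\in a})=\sum_{j\in\Ma(a)}\iota_j(\zeta_j)$, where $\zeta_j:=z_j+\sum_i\varphi_{ji}(z_i)\in M_j$, the sum running over $i\in a\setminus\Ma(a)$ with $r_a(i)=j$ and $z_i\neq0$. One checks that $h_a$ is a semigroup homomorphism; the essential claim is that it is constant on $\equiv$-classes, and by Lemma~\ref{lem:equivrels} this reduces to the equality $h_a(w+\chi(a,i,\alpha))=h_a(w+\chi(a,j_0,\varphi_{j_0 i}(\alpha)))$ for $w\in H_a$, $j_0\in\Ma(a)$, $i<j_0$, $\alpha\in M_i$. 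Writing $\zeta_j$ for the quantities computed from $w$ and $j_1:=r_a(i)$, the left-hand side is $h_a(w)$ with $\iota_{j_1}(\zeta_{j_1})$ replaced by $\iota_{j_1}(\zeta_{j_1}+\varphi_{j_1 i}(\alpha))$, and the right-hand side is $h_a(w)$ with $\iota_{j_0}(\zeta_{j_0})$ replaced by $\iota_{j_0}(\zeta_{j_0}+\varphi_{j_0 i}(\alpha))$; if $j_1=j_0$ these agree, and if $j_1\neq j_0$ they agree once one applies the two instances $\iota_{j_1}(\zeta_{j_1})+\iota_i(\alpha)=\iota_{j_1}(\zeta_{j_1}+\varphi_{j_1 i}(\alpha))$ and $\iota_{j_0}(\zeta_{j_0})+\iota_i(\alpha)=\iota_{j_0}(\zeta_{j_0}+\varphi_{j_0 i}(\alpha))$ of the defining relations of $P$ (valid because $i<j_1$ and $i<j_0$). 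Thus $h_a$ descends to $g_a\colon M_a\to P$, and since the sum defining $\zeta_j$ is empty on tuples supported on $\Ma(a)$, one has $g_a(\Psi(\sum_{i\in\Ma(a)}\iota_i(z_i)))=\sum_{i\in\Ma(a)}\iota_i(z_i)$.

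Finally, let $p,p'\in P$ with $\Psi(p)=\Psi(p')$, and put them in normal form: $p=\sum_{i\in\Ma(a)}\iota_i(z_i)$, $p'=\sum_{i\in\Ma(b)}\iota_i(z_i')$. Then $\Psi(p)=[(z_i)_{i\in\Ma(a)}]\in M_a$ and $\Psi(p')=[(z_i')_{i\in\Ma(b)}]\in M_b$; since $M(\mathcal{J})=\bigsqcup_c M_c$ is a disjoint union, $a=b$, and applying $g_a$ to the common value gives $p=g_a(\Psi(p))=g_a(\Psi(p'))=p'$. Hence $\Psi$ is bijective, which is the assertion. The two genuinely delicate points are those flagged in the previous paragraph: that $h_a$ is well defined modulo $\equiv$, and that the rewriting inside $P$ actually terminates in a tuple supported on $\Ma(a)$. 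Both are routine, but both call for care with the maximal/non-maximal dichotomy and with the bookkeeping around zero components and the free/regular split that is built into the definitions of $H_a$ and $U_a$.
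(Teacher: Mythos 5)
Your argument is correct and follows the route the paper intends: the corollary is presented there as an immediate consequence of Lemma~\ref{lem:equivrels} (citing \cite[Corollary 1.6]{AP}), and your proof is a careful expansion of exactly that derivation --- the comparison map $\Psi$, the rewriting to a normal form supported on $\Ma(a)$, and the retraction-based maps $g_a$ all hinge precisely on the identification of $\sim$ with the congruence $\equiv$ generated by the translated pairs. The delicate points you flag are handled correctly (in particular, $\iota_j$ is only ever applied at maximal $j$, so $\zeta_j\in M_j$ even in the free case, and the zero components at free non-maximal indices are harmless since $\widehat{\varphi}_{ji}$ kills them).
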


\begin{nota}{\rm
\label{nota:chisubi} Assume $\mathcal J$ is an $I$-system. For $i\in I$ and $x\in M_i$ we will denote by
$\chi _i(x)$ the element $[\chi (I\dnw i, i, x)]\in M(\mathcal J)$. Note that, by Corollary \ref{cor:presentationofM},
$M(\mathcal J )$ is the monoid generated by $\chi _i (x)$, $i\in I$, $x\in M_i$, with the defining relations
$$\chi _j (x)+\chi _i (y)= \chi _j (x+\varphi _{ji}(y)), \quad i<j, \, x\in M_j,\,  y\in M_i.$$}
\end{nota}

\begin{lem}
 \label{lem:GrotofM} For each $a\in A(I)$, the Grothendieck group of $M_a$ is the group $\widetilde{G}_a$.
 \end{lem}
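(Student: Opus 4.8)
The plan is to unpack the construction of $M_a$ and $\widetilde{G}_a$ and check the universal property of the Grothendieck group directly. Recall that $M_a = H_a/{\sim}$ is a submonoid of $\widetilde{G}_a = \widehat{H}_a/U_a$ by Corollary \ref{cor:NouSubmonoid}, where the inclusion is induced by $H_a \hookrightarrow \widehat{H}_a$ followed by $\Phi_a$. So the first step is to observe that $\widetilde{G}_a$ is already a group containing $M_a$, hence there is a canonical group homomorphism $\theta\colon G(M_a) \to \widetilde{G}_a$ extending the inclusion $M_a \hookrightarrow \widetilde{G}_a$, and the task reduces to showing $\theta$ is an isomorphism.

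For surjectivity, I would argue that $\widehat{H}_a$ is the Grothendieck group of $H_a$: indeed $H_a$ is (a finite sum of) semigroups of the form $\N\times G_i$ or $G_i$, whose group completions are $\Z\times G_i$ or $G_i$, so that $\widehat{H}_a = \bigoplus_{i\in a}\widehat{G}_i = G(H_a)$ — this is essentially the observation already made in Definition \ref{def:I-system}(b) together with the remarks on $G(\N\times G)=\Z\times G$ in the preliminaries. Since $\Phi_a\colon \widehat{H}_a \onto \widetilde{G}_a$ is onto and $\widehat{H}_a$ is generated as a group by $H_a$, the group $\widetilde{G}_a$ is generated by the image of $H_a$, i.e. by $M_a$; hence $\theta$ is surjective. (Concretely: every element of $\widetilde{G}_a$ has the form $[x]-[y]$ with $x,y\in H_a$, which is $\theta(\iota(x)-\iota(y))$ where $\iota\colon M_a\to G(M_a)$.)

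For injectivity, the cleanest route is to produce an inverse. By the universal property of $\widehat{H}_a = G(H_a)$, the composite $H_a \onto M_a \xrightarrow{\iota} G(M_a)$ extends uniquely to a group homomorphism $\widehat{H}_a \to G(M_a)$; I would check that this homomorphism kills the subgroup $U_a$, which is clear because $U_a$ is generated by differences $\chi(a,i,\alpha) - \chi(a,j,\widehat\varphi_{ji}(\alpha))$ and, by Lemma \ref{lem:equivrels}, for $\alpha\in M_i$ these differences already become $0$ in $M_a$ (the relation $x+\chi(a,i,\alpha) \equiv x+\chi(a,j,\varphi_{ji}(\alpha))$), so they map to $0$ in $G(M_a)$; for general $\alpha\in\widehat G_i$ one writes $\alpha$ as a difference of elements of $M_i$ and uses additivity. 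Thus one gets an induced map $\widetilde{G}_a = \widehat{H}_a/U_a \to G(M_a)$, and one verifies on generators that it is a two-sided inverse to $\theta$. The only genuinely delicate point — the main obstacle — is this last verification that $U_a$ lies in the kernel: it requires being careful that the generators of $U_a$ are indexed by $u\in\widehat G_i$ (the full Grothendieck group), whereas the congruence $\equiv$ of Lemma \ref{lem:equivrels} uses only $\alpha\in M_i$, so one must reduce the $\widehat G_i$-case to the $M_i$-case by the device (used throughout \cite{AP}) of writing $u = \alpha_1 - \alpha_2$ with $\alpha_1,\alpha_2 \in M_i$ and exploiting bi-additivity of $\chi(a,i,-)$ and additivity of $\widehat\varphi_{ji}$. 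Everything else is a routine diagram chase.
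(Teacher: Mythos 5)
Your proposal is correct, and its surjectivity half is essentially the paper's entire proof: the authors note that $M_a$ is a submonoid of the group $\widetilde{G}_a$ (Corollary \ref{cor:NouSubmonoid}), so it suffices to show that every element of $\widetilde{G}_a$ is a difference of two elements of $M_a$, which they do by writing each generator $\chi(a,i,x)$, $x\in\widehat{G}_i$, explicitly as a difference of elements of $H_a$ --- this is your group-generation argument made concrete. The injectivity half of your argument (extending $H_a\to G(M_a)$ over $\widehat{H}_a=G(H_a)$, checking that $U_a$ is killed, and producing an inverse to $\theta$) is correct but redundant: once a monoid $M$ embeds as a submonoid of a group $G$, the canonical map $G(M)\to G$ is automatically injective, since $x-y=x'-y'$ in $G$ forces $x+y'=x'+y$, an identity that already holds inside $M$; hence only the ``every element is a difference'' statement requires proof. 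Your flagged delicate point --- reconciling the generators of $U_a$ indexed by $u\in\widehat{G}_i$ with the relation of Lemma \ref{lem:equivrels} indexed by $\alpha\in M_i$ (and the need to add an auxiliary $w\in H_a$ so that both sides of the relation actually lie in $H_a$) --- is handled correctly, but the whole verification can simply be omitted.
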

 \begin{proof}
  Since $M_a$ is a submonoid of $\widetilde{G}_a$, we only have to show that every element of $\widetilde{G}_a$ can be written as
  a difference of two elements of $M_a$. For this, it is enough to show that for each $i\in a$ and each $x\in \widehat{G}_i$, the element 
  $\chi (a, i, x)$ of $\widehat{H}_a$ can be written as a difference of two elements in $H_a$. If $i\in a_{{\rm reg}}$, then we can write
  $$\chi (a, i,x) = \Big( \chi (a,i,x) + \sum_{j\in a_{{\rm free}}} \chi (a, j ,(1,e_j))\Big) - \Big( \sum_{j\in a_{{\rm free}}} \chi (a, j, (1,e_j))\Big) \in H_a - H_a.$$
  If $i\in a_{{\rm free}}$, then select $n\in \N$ such that $(n, e_i)+x\in M_i$, and write
  $$\chi(a,i,x) = $$
  $$\Big( \chi (a, i, (n,e_i)+x) + \sum_{j\in a_{{\rm free}}} \chi (a, j, (1,e_j))\Big) - \Big( \chi (a, i, (n,e_i)) + \sum_{j\in a_{{\rm free}}} \chi (a, j, (1,e_j))\Big) \in H_a-H_a.$$
 \end{proof}

\begin{lem}
 \label{lem:when-a-isidown}
 Let $i\in I$ and set $a= I \dnw i$. Then $M_a= M_i$ and $\widetilde{G}_a = \widehat{G}_i$.
\end{lem}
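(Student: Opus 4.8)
The plan is to unwind the definitions of $H_a$, $U_a$, $\widehat H_a$, $\widetilde G_a$ and $M_a$ in the special case $a = I\dnw i$, exploiting that $i$ is the unique maximal element of this lower set, so that $\Ma(a) = \{i\}$ and every other element of $a$ lies strictly below $i$. The first step is to identify $\widetilde G_a = \widehat G_i$. Since $\Ma(a)=\{i\}$ is a singleton, there are no pairs $j < j'$ of maximal elements, so the generating set $\{\chi(a,j,u)-\chi(a,j',\widehat\varphi_{j'j}(u)) : j<j'\in\Ma(a)\}$ of $U_a$ is empty; hence $U_a = \{0\}$ and $\widetilde G_a = \widehat H_a = \bigoplus_{k\in a}\widehat G_k$. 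That is not literally $\widehat G_i$, so the real content here is that $M_a$ collapses all the other coordinates. I would therefore handle $\widetilde G_a$ and $M_a$ simultaneously rather than separately.

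The main step is to compute $M_a = H_a/{\equiv}$ using Lemma \ref{lem:equivrels}. Because $\Ma(a) = \{i\}$, the congruence $\equiv$ on $H_a$ is generated by no relations of the form coming from pairs of maximal elements — so a priori $M_a = H_a$. But one must be careful: the submonoid structure used to define $M(\mathcal J)$ via $\chi_i$ and Corollary \ref{cor:presentationofM} involves the relations $\chi_j(x)+\chi_k(y) = \chi_j(x+\varphi_{jk}(y))$ for $k<j$, and here every $k\in a\setminus\{i\}$ satisfies $k < i$. So inside $M(\mathcal J)$ (equivalently inside $\widetilde M(\mathcal J)$ via the embedding of $M_a$), the generator $\chi_k(y)$ for $k<i$ gets absorbed: $\chi_i(x)+\chi_k(y) = \chi_i(x+\varphi_{ik}(y))$. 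I would show that every element of $H_a$ is $\equiv$-equivalent to one supported at the single index $i$, by induction on the number of nonzero coordinates: given $z=(z_k)_{k\in a}\in H_a$ with some $z_k\neq 0$ for $k<i$, use that $z_k\in \N\times G_k$ or $z_k\in\{(0,0_k)\}\cup(\N\times G_k)$, write $z = z' + \chi(a,k,z_k)$ with $z'\in H_a$ having one fewer nonzero off-$i$ coordinate, and apply the generating relation $(x+\chi(a,k,\alpha), x+\chi(a,i,\varphi_{ik}(\alpha)))$ with $x$ obtained from $z'$ (noting $i\in\Ma(a)$, so the relation with $k<i\in\Ma(a)$ is available in the list of Lemma \ref{lem:equivrels}). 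One needs the constraint $i\in\Ma(a)_{\mathrm{free}}$ forcing $z_i\in\N\times G_i$, so that after the moves the $i$-coordinate stays in $M_i$ and the result lands in $H_a$ when $i$ is free; when $i$ is regular the verification is easier. This yields a surjection $M_i = H_{\{i\}}\to M_a$; injectivity follows because the composite $M_i\to M_a\hookrightarrow\widetilde G_a = \widehat H_a$ is the coordinate inclusion into the $i$-th summand, which is injective.

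Finally, identifying $\widetilde G_a$: from the description above $\widetilde G_a = \bigoplus_{k\in a}\widehat G_k$ as an abelian group, but Lemma \ref{lem:GrotofM} says $\widetilde G_a$ is the Grothendieck group of $M_a$; combining with $M_a\cong M_i$ and the fact that the Grothendieck group of $M_i$ is $\widehat G_i$ (by Definition \ref{def:I-system}(b), observed there explicitly), we get $\widetilde G_a\cong \widehat G_i$. Alternatively, and more cleanly, I would argue directly that the coordinate projection $\widehat H_a\to\widehat G_i$ restricts to the claimed isomorphism on Grothendieck groups once $M_a$ is identified with the $i$-th summand. The step I expect to be the main obstacle is the inductive absorption argument showing $H_a/{\equiv}$ is carried by the single index $i$ while staying inside $H_a$ at each stage — in particular keeping track of the free/non-maximal bookkeeping in the definition of $H_a$ (the $\{(0,0_k)\}\cup(\N\times G_k)$ versus $\N\times G_k$ distinction) so that the intermediate elements $z'$ and the relation-applications genuinely live in $H_a$; the group-theoretic identification of $\widetilde G_a$ is then essentially formal.
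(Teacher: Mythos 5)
There is a genuine error at the start of your argument: you have misread the generating set of $U_a$. In the notation $\{\chi(a,i,u)-\chi(a,j,\widehat{\varphi}_{ji}(u)) : i<j\in \Ma(a),\ u\in\widehat{G}_i\}$, the condition ``$i<j\in\Ma(a)$'' means ``$i<j$ and $j\in\Ma(a)$'', with $i$ ranging over all of $a$ — not that both indices are maximal. Under your reading, $U_a$ would vanish for \emph{every} $a$ (maximal elements are pairwise incomparable), Dobbertin's construction would degenerate to $\widetilde{G}_a=\widehat{H}_a$ in all cases, and the very statement $\widetilde{G}_a=\widehat{G}_i$ you are proving would be false whenever $a\neq\{i\}$. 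You in fact read the same notation correctly a few lines later, when you invoke the pairs $(x+\chi(a,k,\alpha),\,x+\chi(a,i,\varphi_{ik}(\alpha)))$ with $k<i\in\Ma(a)$ from Lemma \ref{lem:equivrels}; but the two readings are mutually inconsistent, since by Lemma \ref{lem:equivrels} the congruences $\sim$ and $\equiv$ coincide, so $U_a=\{0\}$ would force $\equiv$ to be trivial and your absorption argument could not even start. The error propagates: your injectivity step rests on the composite $M_i\to M_a\hookrightarrow\widetilde{G}_a=\widehat{H}_a$ being a coordinate inclusion, which is not the correct description of $\widetilde{G}_a$, and your closing sentence asserts simultaneously $\widetilde{G}_a=\bigoplus_{k\in a}\widehat{G}_k$ and $\widetilde{G}_a\cong\widehat{G}_i$, which are incompatible unless $a=\{i\}$.

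With the correct reading everything straightens out, essentially along the lines of the paper's proof. Since $\Ma(a)=\{i\}$, $U_a$ is generated by the elements $\chi(a,k,u)-\chi(a,i,\widehat{\varphi}_{ik}(u))$ for $k<i$ and $u\in\widehat{G}_k$, and the homomorphism $\widehat{H}_a\to\widehat{G}_i$ given by $(z_k)_{k\in a}\mapsto\sum_{k\le i}\widehat{\varphi}_{ik}(z_k)$ (with $\widehat{\varphi}_{ii}=\mathrm{id}$) annihilates $U_a$ and is inverse to $u\mapsto[\chi(a,i,u)]$, which gives $\widetilde{G}_a=\widehat{G}_i$ directly rather than via Lemma \ref{lem:GrotofM}. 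The restriction of this retraction to $H_a$, namely $\psi\bigl(\sum_{j\le i}\chi(a,j,x_j)\bigr)=\sum_{j\le i}\varphi_{ij}(x_j)$, is exactly the map the paper uses: it factors through $\equiv$ and inverts $\phi\colon x\mapsto\chi_i(x)$, yielding surjectivity and injectivity in one stroke. Your induction on the number of nonzero coordinates is a workable substitute for the surjectivity half, but you still need this retraction (or the corrected computation of $\widetilde{G}_a$) to obtain injectivity.
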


\begin{proof}
 We have a surjective monoid homomorphism $\phi \colon M_i \to M_a$ sending $x\in M_i$ to $\chi _i (x)$. Define a monoid homomorphism 
 $\psi \colon H_a \to M_i$ by 
 $$\psi (\sum _{j\le i} \chi (x_j, j,a) ) = \sum_{j\le i} \varphi_{ij} (x_j), $$
 where $x_i\in M_i$,  $x_j\in M_j=G_j$ for $j\in a_{{\rm reg}}\setminus \{ i \} $ and $x_j \in \{ (0,e_j)\} \cup M_j$ for $j\in a_{{\rm free}}\setminus \{ i \}$. 
 (We are setting here $ \varphi_{ii}= \text{Id}_{M_i}$.) 
 Then, $\psi $ clearly factors through the congruence $\equiv$ described in Lemma \ref{lem:equivrels}, and so induces a monoid homomorphism $\ol{\psi}\colon M_a\to M_i$, which 
 is clearly the inverse map of
 $\phi$. A similar proof gives that $\widetilde{G}_a = \widehat{G}_i$.
 \end{proof}

We will denote by $\mathcal L (M)$ the lattice of order-ideals of a monoid $M$ and by $\mathcal L (I)$ the lattice of lower subsets
of a poset $I$.

\begin{prop} [{\cite[Proposition  1.9]{AP}}]
 \label{prop:characideals}
 Let $\mathcal J$ be an $I$-system. Then there is a lattice isomorphism $$\mathcal L (I) \cong \mathcal L (M(\mathcal J)).$$
 More precisely, given a lower subset $J$ of $I$, the restricted $J$-system is $$\mathcal J _J :=(J, \le , (G_i)_{i\in J}, \varphi _{ji}, (i<j\in J)), $$
 and the 
 map $J\mapsto M(\mathcal J _J)$ defines a lattice isomorphism from $\mathcal L (I)$ onto $\mathcal L (M(\mathcal J ))$. 
 \end{prop}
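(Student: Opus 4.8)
The plan is to produce the inverse of $J\mapsto M(\mathcal J_J)$ explicitly and to reduce the statement to the computation of the order-ideal of $M(\mathcal J)$ generated by a single element.

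Since as a set $M(\mathcal J)=\bigsqcup_{a\in A(I)}M_a$, with the blocks $M_a\subseteq\widetilde G_a$ pairwise disjoint, each $x\in M(\mathcal J)$ lies in a unique block; write $\sigma(x)=a$ when $x\in M_a$. From the definition of the addition of $M(\mathcal J)$ (inherited from that of $\widetilde M(\mathcal J)$) one has $\sigma(x+y)=\sigma(x)\cup\sigma(y)$, and hence $x\le y$ implies $\sigma(x)\subseteq\sigma(y)$. Define $\Theta(J)=M(\mathcal J_J)$ and $\Psi(S)=\bigcup_{s\in S}\sigma(s)$. Observe that, since the block $M_a$ depends only on the restriction of $\mathcal J$ to $a$, one may identify $M(\mathcal J_J)$ with the submonoid $\bigsqcup_{a\in A(I),\,a\subseteq J}M_a=\{x\in M(\mathcal J):\sigma(x)\subseteq J\}$ of $M(\mathcal J)$. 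Then the routine points are: $\Psi(S)$ is a lower subset of $I$, being a union of the lower subsets $\sigma(s)$; $\Theta(J)$ is a submonoid of $M(\mathcal J)$ because $\{a\in A(I):a\subseteq J\}$ is a subsemilattice of $A(I)$, and it is $\le$-hereditary by the implication above, hence an order-ideal; both $\Theta$ and $\Psi$ preserve inclusions; and $\Psi(\Theta(J))=\bigcup\{a\in A(I):a\subseteq J\}=J$, since every $i\in J$ lies in $I\dnw i\in A(I)$ with $I\dnw i\subseteq J$ and every block $M_a$ is nonempty. It remains to prove $\Theta(\Psi(S))=S$ for every order-ideal $S$.

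The heart of the argument is the claim that, for every $s\in M(\mathcal J)$, the order-ideal $O_s:=\{y\in M(\mathcal J):y\le ns\text{ for some }n\in\N\}$ generated by $s$ equals $M(\mathcal J_{\sigma(s)})$. One inclusion is immediate, $M(\mathcal J_{\sigma(s)})$ being an order-ideal containing $s$. For the converse, set $a=\sigma(s)$; by Corollary~\ref{cor:presentationofM} applied to $\mathcal J_a$ it suffices to show $\chi_i(x)\in O_s$ for all $i\in a$ and $x\in M_i$, and one argues by the position of $i$ in $a$ using the description $M_a=H_a/{\sim}$. If $i\in I_{reg}$, then, with $e_i$ the identity of $G_i=M_i$, one has $\chi_i(e_i)+s=s$ (adding $\chi_i(e_i)$, the class of $\chi(I\dnw i,i,0)$, does not change a representative of $s$), so $\chi_i(e_i)\le s$, and since $M_{I\dnw i}\cong G_i$ is a group (Lemma~\ref{lem:when-a-isidown}) one gets $\chi_i(x)\le\chi_i(e_i)\le s$. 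If $i$ is a free maximal element of $a$, then a representative $(z_k)_{k\in a}\in H_a$ of $s$ has $i$-th coordinate of the form $(n_i,h_i)$ with $n_i\ge1$; given $x=(m,g)\in M_i$, put $N=m+1$, so that $Ns$ is the class of $(Nz_k)_{k\in a}$, whose $i$-th coordinate $(Nn_i,Nh_i)$ has first entry $>m$; then $(Nz_k)_{k\in a}-\chi(a,i,(m,g))$ still belongs to $H_a$, and $Ns=\chi_i(x)+t$ with $t$ its class, so $\chi_i(x)\le Ns$. Finally, if $i$ is not maximal in $a$, choose $j\in\Ma(a)$ with $i<j$; the relation $\chi_j(w)+\chi_i(x)=\chi_j(w+\varphi_{ji}(x))$ gives $\chi_i(x)\le\chi_j(w')$ for a suitable $w'\in M_j$, and $\chi_j(w')\in O_s$ by the first two cases. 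Hence $M(\mathcal J_a)\subseteq O_s$, proving the claim.

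Granting this, let $S$ be an order-ideal and $J=\Psi(S)$. That $S\subseteq\Theta(J)$ is clear, as $\sigma(s)\subseteq J$ for $s\in S$. Conversely, $M(\mathcal J_J)$ is generated as a monoid by $\bigcup_{i\in J}\chi_i(M_i)$ (Corollary~\ref{cor:presentationofM}), and for $i\in J$ one may pick $s\in S$ with $i\in\sigma(s)$, whence $\chi_i(M_i)\subseteq O_s\subseteq S$ by the claim together with the fact that $S$ is an order-ideal containing $s$; so $M(\mathcal J_J)\subseteq S$. Thus $\Theta$ and $\Psi$ are mutually inverse inclusion-preserving bijections between $\mathcal L(I)$ and $\mathcal L(M(\mathcal J))$, hence order-isomorphisms, hence lattice isomorphisms. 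I expect the main obstacle to be exactly the claim about $O_s$: namely the careful bookkeeping with representatives in the semilattice of semigroups $(A(I),(H_a),f^b_a)$ that is needed to verify the inequalities $\chi_i(x)\le Ns$, the case of a free maximal $i$ being the delicate one and being handled by passing to a suitable multiple of $s$.
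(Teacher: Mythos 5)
The paper does not prove this proposition; it is imported verbatim from \cite[Proposition 1.9]{AP}, so there is no in-text argument to compare yours against. Judged on its own, your proof is correct, and it is built entirely from the machinery that Section 2 does reproduce (the block decomposition $M(\mathcal J)=\bigsqcup_{a\in A(I)}M_a$, the description of $H_a$, Corollary \ref{cor:presentationofM}, and Lemma \ref{lem:when-a-isidown}), so it is a legitimate self-contained substitute for the citation. The two points where the argument could have gone wrong are exactly the ones you flag and handle: (1) the identity $\sigma(x+y)=\sigma(x)\cup\sigma(y)$, which rests on $M_a+M_b\subseteq M_{a\cup b}$ (the content of \cite[Lemma 1.3]{AP} invoked in Definition \ref{def:MJ}) and gives both that $\Theta(J)$ is hereditary and that $\Psi(S)$ records supports correctly; and (2) the computation $O_s=M(\mathcal J_{\sigma(s)})$, where the regular case uses that $\chi_i(e_i)$ is represented by the zero of $\widehat H_{I\dnw i}$ and hence is an idempotent below $s$, and the free maximal case needs the strict positivity of the $i$-th coordinate of a representative in $H_a$ (which is where the definition of $H_a$ at maximal free indices is really used) so that subtracting $\chi(a,i,(m,g))$ from $(m+1)s$ stays in $H_a$. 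The reduction of non-maximal indices to maximal ones via the defining relation $\chi_j(w)+\chi_i(y)=\chi_j(w+\varphi_{ji}(y))$ is also sound, since every element of $a\in A(I)$ lies below a maximal one. I find no gap.
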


 \begin{lem}
 \label{lem:GrotIdeals} Let $I$ be a poset and let $\mathcal J $ be an $I$-system. Let $J$ be a finitely generated lower subset of $I$ and let
 $\mathcal J_J$ be the restricted $J$-system. Then the Grothendieck group of the associated order-ideal $M(\mathcal J_J)$ of $M(\mathcal J)$ is precisely
 $\widetilde{G}_J$. 
  \end{lem}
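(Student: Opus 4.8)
The plan is to compare $M(\mathcal J_J)$ with its submonoid $M_J$, whose Grothendieck group is already identified by Lemma~\ref{lem:GrotofM}, and to show that the inclusion $M_J\hookrightarrow M(\mathcal J_J)$ induces an isomorphism on Grothendieck groups.

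First I would unwind the definitions. Since $J$ is a finitely generated lower subset of $I$, it is the greatest element of $A(J)$, and a subset of $J$ is a finitely generated lower subset of $J$ exactly when it is a finitely generated lower subset of $I$ contained in $J$; hence $A(J)=\{a\in A(I):a\subseteq J\}$, and all the auxiliary objects attached to the restricted system $\mathcal J_J$ (the groups $\widehat G_i$, the semigroups $H_a$, the subgroups $U_a$, the groups $\widetilde G_a$, the semigroups $M_a$, and the transition maps $\widetilde f_a^b$) coincide with the corresponding objects attached to $\mathcal J$. In particular, by Definition~\ref{def:MJ} applied to $\mathcal J_J$, we have $M(\mathcal J_J)=\bigsqcup_{a\in A(J)}M_a$, a submonoid of $\widetilde M(\mathcal J_J)=\bigsqcup_{a\in A(J)}\widetilde G_a$; the component $M_J$ (corresponding to $a=J$) is a submonoid of $M(\mathcal J_J)$, and $G(M_J)=\widetilde G_J$ by Lemma~\ref{lem:GrotofM}, the canonical map $M_J\to G(M_J)$ being the inclusion $M_J\hookrightarrow\widetilde G_J$.

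Next I would produce a retraction onto $\widetilde G_J$. Since $a\subseteq J$ for every $a\in A(J)$ and the $\widetilde f_a^b$ form a semilattice of groups, the rule $x\mapsto\widetilde f_a^J(x)$ for $x\in\widetilde G_a$ defines a monoid homomorphism $\widetilde M(\mathcal J_J)\to\widetilde G_J$; let $\pi\colon M(\mathcal J_J)\to\widetilde G_J$ be its restriction, and $\widetilde\pi\colon G(M(\mathcal J_J))\to\widetilde G_J$ the induced group homomorphism. Note $\pi$ restricts on the component $M_J$ to the inclusion $M_J\hookrightarrow\widetilde G_J$, because $\widetilde f_J^J=\mathrm{id}$. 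Now write $j\colon M_J\hookrightarrow M(\mathcal J_J)$ for the inclusion and $\psi\colon M(\mathcal J_J)\to G(M(\mathcal J_J))$ for the canonical map. For surjectivity of $G(j)$: given $y\in M_a\subseteq M(\mathcal J_J)$ with $a\in A(J)$, the element $\widetilde f_a^J(y)$ of $\widetilde G_J=G(M_J)$ can be written as $u-v$ with $u,v\in M_J$; computing $y+v$ in $\widetilde M(\mathcal J_J)$ and using $a\cup J=J$ gives $y+v=\widetilde f_a^J(y)+v=u$, an identity that then holds in the submonoid $M(\mathcal J_J)$ as well, so $\psi(y)=\psi(u)-\psi(v)$ lies in the subgroup generated by $\psi(M_J)$; as the elements $\psi(y)$ generate $G(M(\mathcal J_J))$, the map $G(j)$ is onto. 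For injectivity: the composite $\widetilde G_J=G(M_J)\xrightarrow{G(j)}G(M(\mathcal J_J))\xrightarrow{\widetilde\pi}\widetilde G_J$ fixes every element of $M_J$ (since $\pi\circ j$ is the inclusion $M_J\hookrightarrow\widetilde G_J$ and $M_J$ generates $\widetilde G_J$ as a group), hence equals $\mathrm{id}_{\widetilde G_J}$, so $G(j)$ is injective. Therefore $G(j)$ is an isomorphism with inverse $\widetilde\pi$; that is, $\widetilde\pi\colon G(M(\mathcal J_J))\xrightarrow{\ \cong\ }\widetilde G_J$, the asserted identification.

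I expect the only delicate points to be the bookkeeping of the first paragraph—verifying that passing to the restricted system $\mathcal J_J$ leaves all the auxiliary objects unchanged when $J$ is finitely generated—and the verification that $y+v=u$ in $M(\mathcal J_J)$ in the surjectivity step; both reduce to writing out the definition of addition in $\widetilde M(\mathcal J_J)$ and are routine.
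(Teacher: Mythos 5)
Your proof is correct and follows essentially the same route as the paper: your retraction $\pi(z)=\widetilde f_a^J(z)$ is literally the paper's map $\tau(z)=(x+f_a^J(z))-x$, and both arguments reduce the claim to Lemma~\ref{lem:GrotofM} via the observation that any $y\in M_a$ is pushed into $M_J$ by adding a suitable element $v\in M_J$ (your identity $y+v=u$). The only difference is packaging: the paper verifies the universal property of $\tau$ directly, whereas you show that the inclusion $M_J\hookrightarrow M(\mathcal J_J)$ induces an isomorphism of Grothendieck groups with inverse $\widetilde\pi$ --- an equivalent formulation of the same computation.
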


  \begin{proof}
We have $M(\mathcal J _J) = \bigsqcup _{a\in A(J)} M_a$. Let $x$ be an element in $M_J$, and define a semigroup homomorphism
   $$\tau  \colon M(\mathcal J _J) \longrightarrow G(M_J)$$
   by $\tau (z) = (x+ f_a^J (z))-x$ for $z\in M_a$. Then it is easily seen that $\tau$ is the canonical map from $M(\mathcal J_J)$ to its Grothendieck group, that is, that for every semigroup homomorphism 
   $\lambda \colon M(\mathcal J_J)\to G$, where $G$ is a group, there is a unique group homomorphism $\widetilde{\lambda}\colon G(M_J) \to G$ such that 
   $\lambda = \widetilde{\lambda}\circ \tau$. Indeed, given $\lambda $ as above, $\widetilde{\lambda}$ is just the canonical map from the Grothendieck group
   $G(M_J)$ of $M_J$ to $G$ induced by the restriction of $\lambda $ to $M_J$.

 Now, we can apply Lemma \ref{lem:GrotofM} to derive the result.
\end{proof}

Recall that given a poset $I$, and an element $i\in I$, the {\it lower cover} of $i$ in $I$ is the set 
$$\rL(I,i)=\{j\in I : j<i \text{ and } [j,i]=\{j,i\}\}.$$

We now consider the special case of a finitely generated conical regular refinement monoid $M$. Our goal will be to realize $M$ as the graph monoid
$M(E)$ for some row-finite directed graph $E$. In this case the results from \cite{AP} reduce to Dobbertin's results \cite{Dobb84}.

 For the rest of this subsection, let $M$ be a finitely generated conical \underline{regular} refinement monoid. The antisymmetrization $\ol{M}$ of $M$ is then an antisymmetric 
 regular refinement monoid (cf. \cite[Theorem 5.2]{Brook}). The set $\mathbb P$ is defined by taking a representative $p$ for each prime $\ol{p}\in \mathbb P (\ol{M})$.
 For each $p\in \mathbb P$, the archimedean  component $M_p$ of $p$ is a finitely generated abelian group, denoted by $G_p$. Since we have
 $\ol{p}= \ol{x}$ for all $x\in G_p$, we may take $p=e_p$, the neutral element of the group $G_p$, as a canonical representative of $\ol{p}$, so that 
 $$\mathbb P = \{ e\in M : e=2e \text{ and } e \text{ is prime } \}.$$
 With the order induced from $M$, $\mathbb P$ is a finite poset. Note that $e\le f$ in $\mathbb P$ if and only if $f= e+f$.
 For $e\in \mathbb P$, the associated group is $G_e= \{ x\in M : e\le x \le e \}$, which is precisely the archimedean component $G_M[e]$ of $e$.
 Finally if $e\le f$ in $\mathbb P$, then the induced map $\varphi_{fe}\colon G_e\to G_f$ is defined by $\varphi_{f, e}(x) = x+f$ for $x\in G_e$.
 This structure defines the $\mathbb P$-system $\mathcal J_M$ associated to $M$.

  
\subsection{Graph monoids}

Now, we will recall the basic elements about graphs and their monoids that are necessary in the sequel.

A \emph{(directed) graph} $E=(E^0,E^1,r,s)$ consists of two
countable sets  $E^0,E^1$ and maps $r,s:E^1 \to E^0$. The elements
of $E^0$ are called \emph{vertices} and the elements of $E^1$
\emph{edges}.

A vertex $v\in E^0$ is a sink if $s^{-1}(v)=\emptyset$. A graph $E$
is \emph{finite} if $E^0$ and $E^1$ are finite sets.  If $s^{-1}(v)$
is a  finite set for every $v\in E^0$, then the graph is called
\emph{row-finite}. We will only deal with row-finite graphs in this paper, so 
{\bf we make the convention that all graphs appearing henceforth are row-finite;}
we will make this assumption explicit in the statements of the main results.
A \emph{path} $\mu$ in a graph $E$ is a sequence
of edges $\mu=(\mu_1, \dots, \mu_n)$ such that
$r(\mu_i)=s(\mu_{i+1})$ for $i=1,\dots,n-1$. In such a case,
$s(\mu):=s(\mu_1)$ is the \emph{source} of $\mu$ and
$r(\mu):=r(\mu_n)$ is the \emph{range} of $\mu$. If $s(\mu)=r(\mu)$
and $s(\mu_i)\neq s(\mu_j)$ for every $i\neq j$, then $\mu$  is a
called a \emph{cycle}. We say that a cycle $\mu =(\mu_1, \dots
,\mu_n) $ has an \emph{exit} if there is a vertex $v=s(\mu_i)$ and
an edge $f\in s^{-1}(v)\setminus \{\mu_i\}$. If $v=s(\mu)=r(\mu)$
and $s(\mu_i)\neq v$ for every $i>1$, then $\mu$ is a called a
\emph{closed simple path based at $v$}. For a path $\mu$ we
denote by $\mu^0$ the set of its vertices, i.e.,
$\{s(\mu_1),r(\mu_i)\mid i=1,\dots,n\}$. For $n\ge 2$ we define
$E^n$ to be the set of paths of length $n$, and $E^*=\bigcup_{n\ge
0} E^n$ the set of all paths. 

We define a relation $\ge$ on $E^0$ by
setting $v\ge w$ if there is a path $\mu\in E^*$ with $s(\mu)=v$ and
$r(\mu)=w$. A subset $H$ of $E^0$ is called \emph{hereditary} if
$v\ge w$ and $v\in H$ imply $w\in H$. A set $H$ is \emph{saturated} if
every vertex which feeds into $H$ and only into $H$ is again in $H$,
that is, if $s^{-1}(v)\neq \emptyset$ and $r(s^{-1}(v))\subseteq H$
imply $v\in H$. Denote by $\mathcal{H}$ (or by $\mathcal{H}_E$ when
it is necessary to emphasize the dependence on $E$) the set of
hereditary saturated subsets of $E^0$. 

The set $T(v)=\{w\in E^0\mid v\ge w\}$ is the \emph{tree} of $v$, and it is the smallest hereditary subset of $E^0$  containing $v$. We extend this
definition for an arbitrary set $X\subseteq E^0$ by $T(X)=\bigcup_{x\in X} T(x)$. The \emph{hereditary saturated closure} of a set $X$ is defined as
the smallest hereditary and saturated subset of $E^0$ containing $X$. It is shown in \cite{AMFP} that the hereditary saturated closure of a set
$X$ is $\overline{X}=\bigcup_{n=0}^\infty \Lambda_n(X)$, where
\begin{enumerate}

\item[] $\Lambda_0(X)=T(X)$, and
\item[] $\Lambda_n(X)=\{y\in E^0\mid
s^{-1}(y)\neq \emptyset$ and $r(s^{-1}(y))\subseteq \Lambda_{n-1}(X)\}\cup \Lambda_{n-1}(X)$, for $n\ge 1$.
\end{enumerate}

We recall here some graph-theoretic constructions which will be of interest. For a hereditary subset of $E^0$, the \emph{quotient graph} $E/H$ is
defined as
$$(E^0\setminus H, \{e\in E^1|\ r(e)\not\in H\}, r|_{(E/H)^1}, s|_{(E/H)^1}),$$ 
and the \emph{restriction graph} is
$$E_H=(H, \{e\in E^1|\ s(e)\in H\}, r|_{(E_H)^1}, s|_{(E_H)^1}).$$

\begin{defi}\label{Def:StronglyConnected}
{\rm Given a graph $E$:
\begin{enumerate}
\item We say that $E$ is transitive if every two vertices of $E^0$ are connected through a finite path. 
\item We say that a nonempty subset $S$ of $E^0$ is strongly connected if the graph
$$(S, s^{-1}(S)\cap r^{-1}(S), s\vert_S, r\vert_S)$$
is transitive. In particular, if $F$ is a subgraph of $E$, we say that $F$ is strongly connected if so does the subset $F^0$ of $E^0$.
\end{enumerate}
}
\end{defi}

For a row-finite graph $E$, the {\it graph monoid}  associated to $E$, denoted by $M(E)$, is the commutative monoid given by the generators $\{
a_v\mid v\in E^0\}$, with the relations:
\begin{equation}\label{(M)}
 a_v= \sum _{\{e\in E^1\mid
s(e)=v\}}a_{r(e)}\qquad \text{for every }v\in E^0 \text{ that
emits edges} .
\end{equation}

Let $\mathbb{F}$ be the free commutative monoid on the set $E^0$. The nonzero
elements of $\mathbb{F}$ can be written in a unique form up to permutation
as $\sum _{i=1}^n x_i$, where $x_i\in E^0$. Now we will give a
description of the congruence on $\mathbb{F}$ generated by the relations
(\ref{(M)}) on $\mathbb{F}$. It will be convenient to introduce the
following notation. For $x\in E^0$, write
$${\bf r}(x):=\sum _{\{e\in E^1\mid s(e)=x\}} r(e)\in \mathbb{F} .$$
With this new notation relations (\ref{(M)}) become $x={\bf r}(x)$
for every $x\in E^0$ that emits edges.

\begin{defi}[{\cite[Section 4]{AMFP}}]
\label{binary} {\rm Define a binary relation $\rightarrow_1$ on
$\mathbb{F}\setminus \{0\}$ as follows. Let $\sum _{i=1}^n x_i$ be an
element in $\mathbb{F}$ as above and let $j\in \{1,\dots ,n\}$ be an index
such that $x_j$ emits edges. Then $\sum _{i=1}^n x_i\rightarrow_1
\sum _{i\ne j}x_i+{\bf r}(x_j)$. Let $\rightarrow $ be the
transitive and reflexive closure of $\rightarrow _1$ on
$\mathbb{F}\setminus \{0\}$, that is, $\alpha\rightarrow \beta$ if and only
if there is a finite string $\alpha =\alpha _0\rightarrow _1
\alpha _1\rightarrow _1 \cdots \rightarrow _1 \alpha _t=\beta.$
Let $\sim$ be the congruence on $\mathbb{F}$ generated by the relation
$\rightarrow_1 $ (or, equivalently, by the relation $\rightarrow
$). Namely $\alpha\sim \alpha$ for all $\alpha \in \mathbb{F}$ and, for
$\alpha,\beta \ne 0$, we have $\alpha\sim \beta$ if and only if
there is a finite string $\alpha =\alpha _0,\alpha_1, \dots
,\alpha _n=\beta$, such that, for each $i=0,\dots ,n-1$, either
$\alpha _i\rightarrow _1 \alpha _{i+1}$ or
$\alpha_{i+1}\rightarrow_1 \alpha _i$. The number $n$ above will
be called the {\it length} of the string.}\qed
\end{defi}

It is clear that $\sim $ is the congruence on $\mathbb{F}$ generated by
relations (\ref{(M)}), and so $M(E)=\mathbb{F}/{\sim}$.

\begin{lem}[{\cite[Lemma 4.3]{AMFP}}] 
\label{lem:lem4.3AMFP}
Let $\alpha$ and $\beta$ be nonzero elements in
$\mathbb{F}$. Then $\alpha \sim \beta$ if and only if there is $\gamma\in
\mathbb{F}$ such that $\alpha \rightarrow \gamma$ and $\beta \rightarrow
\gamma$.
\end{lem}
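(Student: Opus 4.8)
The statement is the classical confluence property of the rewriting system $\rightarrow_1$ on $\FF\setminus\{0\}$, and the natural route is to establish that $\rightarrow$ is \emph{confluent} (has the diamond property up to common successor), and then deduce the stated equivalence. First I would prove the key local lemma: if $\alpha \rightarrow_1 \beta_1$ and $\alpha \rightarrow_1 \beta_2$, then there exists $\gamma$ with $\beta_1 \rightarrow \gamma$ and $\beta_2 \rightarrow \gamma$. This splits into two cases. If the two applications of $\rightarrow_1$ rewrite the \emph{same} occurrence of a vertex $x_j$ (i.e. $\beta_1=\beta_2$), there is nothing to do. If they rewrite vertices sitting in two distinct summands — say the $j$-th summand $x_j$ and the $k$-th summand $x_k$ with $j\neq k$, both emitting edges — then the rewrites act on disjoint parts of the formal sum, so one can apply the other rewrite afterwards: from $\beta_1 = \sum_{i\neq j}x_i + \mathbf{r}(x_j)$ one rewrites the still-present summand $x_k$ to reach $\sum_{i\neq j,k}x_i + \mathbf{r}(x_j)+\mathbf{r}(x_k)$, and symmetrically from $\beta_2$; these coincide. (One must note that $x_k$ is still a summand of $\beta_1$ because $j\neq k$, and that adding $\mathbf{r}(x_j)$ does not interfere.)

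Next I would bootstrap this to full confluence: if $\alpha \rightarrow \beta_1$ and $\alpha \rightarrow \beta_2$ then there is $\gamma$ with $\beta_1\rightarrow\gamma$ and $\beta_2\rightarrow\gamma$. This follows from the local version by the standard tiling/induction argument on the total number of $\rightarrow_1$-steps in the two reduction sequences — one fills in the confluence diagram square by square, using at each corner the local lemma, and chases the diagram to the bottom. Since $\rightarrow$ is by definition the reflexive–transitive closure of $\rightarrow_1$, this is a routine diagram induction and I would only sketch it.

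Finally I would assemble the equivalence. The ``if'' direction is immediate: if $\alpha\rightarrow\gamma$ and $\beta\rightarrow\gamma$, then $\alpha\sim\gamma$ and $\beta\sim\gamma$, hence $\alpha\sim\beta$ since $\sim$ is the congruence (equivalence relation) generated by $\rightarrow_1$. For ``only if'', suppose $\alpha\sim\beta$, so there is a finite zigzag $\alpha=\alpha_0,\alpha_1,\dots,\alpha_n=\beta$ with consecutive terms related by $\rightarrow_1$ in one direction or the other. I would induct on $n$: the case $n=0$ is trivial (take $\gamma=\alpha$), and for the inductive step, having found $\gamma'$ with $\alpha\rightarrow\gamma'$ and $\alpha_{n-1}\rightarrow\gamma'$, I distinguish whether $\alpha_{n-1}\rightarrow_1\alpha_n$ (then $\alpha_n\rightarrow\gamma'$ already, done) or $\alpha_n\rightarrow_1\alpha_{n-1}$ (then $\alpha_{n-1}\rightarrow\gamma'$ and $\alpha_{n-1}\rightarrow\alpha_{n-1}$, apply confluence — or more simply $\alpha_n\rightarrow\alpha_{n-1}\rightarrow\gamma'$ is false in general, so here one genuinely needs confluence applied to $\alpha_n\rightarrow\alpha_{n-1}\rightarrow\gamma'$ versus... no: from $\alpha_n\rightarrow_1\alpha_{n-1}$ and $\alpha_{n-1}\rightarrow\gamma'$ we get $\alpha_n\rightarrow\gamma'$ directly by transitivity, so this case is also immediate). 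Thus the only real content is confluence, and the main obstacle — though a mild one — is getting the bookkeeping right in the local confluence lemma, namely checking that rewriting one summand leaves the other summand (and its emitting-edges status) untouched so that the two one-step rewrites genuinely commute.
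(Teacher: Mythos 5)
This lemma is not proved in the paper at all --- it is quoted verbatim from \cite[Lemma 4.3]{AMFP} --- so the comparison is with the original argument there, which is precisely the confluence argument you outline. Your reconstruction is correct in its essentials: the one-step relation $\rightarrow_1$ satisfies a \emph{strong} local confluence (two distinct one-step rewrites of $\alpha$ either coincide or can each be closed by exactly one further step, since they act on disjoint summands of the formal sum and $\mathbf{r}(x_k)$ is unaffected by replacing $x_j$), and the equivalence then follows by the zigzag induction. One conceptual point you should make explicit: the passage from local to global confluence is \emph{not} ``routine'' for an arbitrary locally confluent system --- Newman's lemma needs termination, and this rewriting system is badly non-terminating (any vertex on a cycle can be rewritten forever). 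What saves the tiling induction is exactly the strong form you actually prove, namely that each diamond closes in at most one step on each side, so completing a square never increases the step count. If your local lemma were taken at face value (closure by arbitrary $\rightarrow$-reductions), the induction ``on the total number of $\rightarrow_1$-steps'' would not go through.

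There is also a repairable slip in your final assembly: you have the two cases of the zigzag induction backwards. If $\alpha_n\rightarrow_1\alpha_{n-1}$, then $\alpha_n\rightarrow_1\alpha_{n-1}\rightarrow\gamma'$ gives $\alpha_n\rightarrow\gamma'$ by transitivity --- that is the immediate case. If instead $\alpha_{n-1}\rightarrow_1\alpha_n$, it is \emph{not} true that ``$\alpha_n\rightarrow\gamma'$ already'': here you have two reductions out of $\alpha_{n-1}$, namely $\alpha_{n-1}\rightarrow_1\alpha_n$ and $\alpha_{n-1}\rightarrow\gamma'$, and you must invoke the confluence you established to produce $\gamma$ with $\alpha_n\rightarrow\gamma$ and $\gamma'\rightarrow\gamma$, whence $\alpha\rightarrow\gamma'\rightarrow\gamma$. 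With those two corrections the proof is complete and agrees with the argument in \cite{AMFP}.
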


\begin{prop}[{\cite[Proposition 4.4]{AMFP}}] The monoid $M(E)$ associated with any row-finite
graph $E$ is a refinement monoid.
\end{prop}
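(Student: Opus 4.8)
The plan is to lift the equation from $M(E)=\mathbb{F}/{\sim}$ to the free commutative monoid $\mathbb{F}$, where refinement is essentially automatic, and then transfer the conclusion back down using the confluence statement of Lemma \ref{lem:lem4.3AMFP}. First recall that $\mathbb{F}=\bigoplus_{v\in E^0}\Z^+$ is a refinement monoid: refinement holds in $\Z^+$ (cancellative and totally ordered), and it passes to arbitrary direct sums of commutative monoids; equivalently, one refines coordinate by coordinate using unique factorization in $\mathbb{F}$.

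Now suppose $a+b=c+d$ in $M(E)$. If one of $a,b,c,d$ is $0$ the required refinement is written down by hand, so assume all four are nonzero and choose nonzero representatives $\alpha,\beta,\gamma,\delta\in\mathbb{F}\setminus\{0\}$ with $[\alpha]=a$, $[\beta]=b$, $[\gamma]=c$, $[\delta]=d$. Then $\alpha+\beta\sim\gamma+\delta$, so by Lemma \ref{lem:lem4.3AMFP} there is $\omega\in\mathbb{F}\setminus\{0\}$ with $\alpha+\beta\to\omega$ and $\gamma+\delta\to\omega$. The key step is a \emph{splitting lemma}: whenever $\mu+\nu\to\omega$ with $\mu,\nu$ nonzero, one can decompose $\omega=\omega_1+\omega_2$ in $\mathbb{F}$ with $\mu\to\omega_1$ and $\nu\to\omega_2$. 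This is proved by induction on the length of the reduction $\mu+\nu=\eta_0\to_1\eta_1\to_1\cdots\to_1\eta_t=\omega$: colour each generator occurrence in each $\eta_i$ by a label in $\{\mu,\nu\}$, starting with the occurrences of $\mu$ labelled $\mu$ and those of $\nu$ labelled $\nu$, and decreeing that when a step rewrites $x_j\mapsto\mathbf{r}(x_j)$ the new occurrences in $\mathbf{r}(x_j)$ inherit the label of $x_j$. Letting $\omega_1$ (resp.\ $\omega_2$) be the sub-word of $\omega$ formed by the $\mu$-labelled (resp.\ $\nu$-labelled) occurrences gives $\omega=\omega_1+\omega_2$; and performing only those steps of the string that act on $\mu$-labelled occurrences witnesses $\mu\to\omega_1$, and similarly $\nu\to\omega_2$. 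Here $\omega_1,\omega_2\neq0$, since each step $\to_1$ sends a nonzero element to a nonzero one because $\mathbf{r}(x_j)\neq0$ whenever $x_j$ emits edges.

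Applying the splitting lemma to $\alpha+\beta\to\omega$ and to $\gamma+\delta\to\omega$ yields $\omega=\omega_1+\omega_2=\omega_3+\omega_4$ in $\mathbb{F}$ with $\alpha\to\omega_1$, $\beta\to\omega_2$, $\gamma\to\omega_3$, $\delta\to\omega_4$. Since $\mathbb{F}$ has refinement, write $\omega_1=w+x$, $\omega_2=y+z$, $\omega_3=w+y$, $\omega_4=x+z$ for suitable $w,x,y,z\in\mathbb{F}$. Passing to $M(E)$ via the quotient map and using that $\alpha\to\omega_1$ forces $[\alpha]=[\omega_1]$ (and likewise for the other three), we obtain $a=[w]+[x]$, $b=[y]+[z]$, $c=[w]+[y]$, $d=[x]+[z]$, the desired refinement of $a+b=c+d$.

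The only genuine work is the splitting lemma, i.e.\ the bookkeeping showing that a reduction of a sum decomposes into reductions of the two summands; everything else is formal, resting on Lemma \ref{lem:lem4.3AMFP} and the (trivial) refinement property of the free commutative monoid.
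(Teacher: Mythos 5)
Your argument is correct, and it is essentially the standard one: the paper itself gives no proof of this proposition (it is quoted from [AMFP, Proposition 4.4]), and the proof there proceeds exactly as you do, via the confluence statement of Lemma \ref{lem:lem4.3AMFP} together with a lemma splitting a reduction of a sum $\mu+\nu\to\omega$ into reductions of the summands, followed by refinement in the free commutative monoid $\mathbb{F}$. Your colouring/bookkeeping proof of the splitting lemma, including the observation that $\mathbf{r}(x_j)\neq 0$ keeps each colour class nonzero, is a correct rendering of that step.
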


Let $E$ be a graph. For any subset $H$ of $E^0$, we will denote by $I(H)$ the order-ideal of $M(E)$ generated by $H$.

Recall that we denote by $\mathcal{L}(M)$ the lattice of order-ideals of a commutative monoid $M$. Order-ideals of $M(E)$ correspond to hereditary saturated subsets of $E^0$, as follows:

\begin{prop}[{\cite[Proposition 5.2]{AMFP}}]
\label{prop:order-ideals-Hersats}
For any row-finite graph $E$, there is a natural lattice isomorphism from $\mathcal{H}_E$ to $\mathcal{L}(M(E))$ sending $H\in \mathcal{H}_E$ to the order-ideal $I(H)$ generated by $H$.
\end{prop}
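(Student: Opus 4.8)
The plan is to establish the lattice isomorphism $\mathcal{H}_E \cong \mathcal{L}(M(E))$ by showing the assignment $H \mapsto I(H)$ is a well-defined, order-preserving bijection whose inverse is also order-preserving, which (since both sides are posets under inclusion) suffices to conclude it is a lattice isomorphism. First I would check that for $H \in \mathcal{H}_E$, the set $I(H)$ has a concrete description: one shows that $I(H) = \{[\alpha] \in M(E) : \alpha \to \gamma \text{ for some } \gamma \in \mathbb{F} \text{ supported on } H\}$, equivalently, that the order-ideal generated by $\{a_v : v \in H\}$ consists precisely of classes of monomials that can be rewritten, via the relation $\to$ of Definition \ref{binary}, into a monomial whose vertices all lie in $H$. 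The inclusion $\supseteq$ is immediate since each $a_v$ with $v\in H$ lies in $I(H)$ and order-ideals are closed under sums and summands; the inclusion $\subseteq$ uses that $H$ being hereditary makes $\sum_{v\in H} \mathbb{F}\langle v\rangle$-type elements closed under the moves $\to_1$ (applying $\mathbf{r}$ to a vertex in $H$ stays in $H$ by heredity), together with Lemma \ref{lem:lem4.3AMFP} to handle the symmetric closure $\sim$.

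Next I would define the candidate inverse map. Given an order-ideal $S \in \mathcal{L}(M(E))$, set $H_S := \{v \in E^0 : a_v \in S\}$. I would verify $H_S$ is hereditary: if $v \in H_S$ and $v \ge w$, then following a path from $v$ to $w$ and repeatedly applying the defining relations (\ref{(M)}) exhibits $a_w$ as a summand of an element $\le$-below $a_v$ (more precisely, $a_w \le a_v$ in $M(E)$ whenever $v \ge w$, which follows by induction on path length from the relations), hence $a_w \in S$ since order-ideals are hereditary for $\le$. I would verify $H_S$ is saturated: if $s^{-1}(v) \ne \emptyset$ and $r(s^{-1}(v)) \subseteq H_S$, then $a_v = \sum_{s(e)=v} a_{r(e)} \in S$ because $S$ is closed under sums, so $v \in H_S$. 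Then I must show the two maps are mutually inverse: $H_{I(H)} = H$ follows because $a_v \in I(H)$ with $v \notin H$ would, by the concrete description above, force a monomial $v \to \gamma$ with $\gamma$ supported on $H$, and one checks (again using heredity and an analysis of the rewriting rules, essentially that $v \notin \overline{H}$ when $v\notin H$ and $H$ is already hereditary and saturated) that this cannot happen; and $I(H_S) \subseteq S$ is clear since $H_S \subseteq S$, while $S \subseteq I(H_S)$ requires showing every class $[\alpha] \in S$ has all its vertices — after suitable rewriting — in $H_S$, which one obtains by a refinement/cancellation argument in the refinement monoid $M(E)$: a monomial lying in an order-ideal has each of its generators lying in that ideal.

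Finally, both maps are evidently inclusion-preserving ($H \subseteq H'$ gives $I(H) \subseteq I(H')$ trivially, and $S \subseteq S'$ gives $H_S \subseteq H_{S'}$ trivially), so the order-isomorphism is automatically a lattice isomorphism.

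\textbf{Main obstacle.} The delicate point is the faithfulness of the combinatorics — precisely the claim $H_{I(H)} = H$, i.e.\ that no new vertex outside a hereditary saturated $H$ can be forced into the order-ideal $I(H)$. This requires a careful confluence-type analysis of the rewriting relation $\to_1$ (leaning on Lemma \ref{lem:lem4.3AMFP}) to control exactly which monomials become $\le$-comparable to sums of vertices in $H$, and it is here that both the hereditary and the saturated conditions on $H$ are genuinely used. Equally, the inclusion $S \subseteq I(H_S)$ needs the refinement property of $M(E)$ to pass from "the class of a monomial lies in $S$" to "each generator appearing lies in $S$"; assembling these two directions cleanly is the technical heart of the argument, whereas the remaining verifications (heredity, saturation, monotonicity) are routine.
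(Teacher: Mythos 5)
The paper does not prove this proposition; it is quoted verbatim from \cite[Proposition 5.2]{AMFP}, so there is no internal proof to compare against. Your proposal is correct and is essentially the standard argument from that reference: the pair of monotone maps $H\mapsto I(H)$ and $S\mapsto H_S=\{v\in E^0 : a_v\in S\}$, the verification that $H_S$ is hereditary (via $a_w\le a_v$ whenever $v\ge w$) and saturated, and the confluence analysis via Lemma \ref{lem:lem4.3AMFP} to prove $H_{I(H)}=H$ (an induction on the length of the rewriting string, where saturation is invoked at each step after decomposing $\alpha+\beta\to\gamma$ into sub-rewritings of $\alpha$ and of $\beta$) are exactly the right ingredients, and you correctly locate where heredity and saturation are each genuinely used. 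One small over-complication: the inclusion $S\subseteq I(H_S)$ does not need the refinement property or any rewriting. Every element of $M(E)$ is by construction the class of a monomial $\alpha=\sum_i x_i$ of the free monoid $\mathbb{F}$, so $[\alpha]=\sum_i a_{x_i}$ exhibits each $a_{x_i}$ as a summand of $[\alpha]$; since order-ideals are closed under summands by definition, $[\alpha]\in S$ already forces $a_{x_i}\in S$, hence $x_i\in H_S$ and $[\alpha]\in I(H_S)$.
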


Also, we have:

\begin{lem}[{\cite[Proposition 5.2]{AMFP}, \cite[Lemma 2.1]{APS}}]
Let $H$ be a subset of $E^0$,  with hereditary saturated closure $\overline{H}$. Then $I(H)=I({\overline{H}})$, and
$\overline{H}=I(H)\cap E^0$.
\end{lem}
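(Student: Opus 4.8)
The plan is to argue entirely at the level of order-ideals, never touching the congruence $\sim$ on the free monoid $\mathbb{F}$ directly. I would set
\[
 J := I(H)\cap E^0 = \{\, v\in E^0 : a_v\in I(H)\,\}.
\]
The key claim to establish first is that $J$ is a hereditary saturated subset of $E^0$. For hereditariness: if $v\in J$ and $v\ge w$, pick a path $v=v_0\to v_1\to\cdots\to v_n=w$; applying the defining relation $a_{v_i}=\sum_{s(e)=v_i}a_{r(e)}$ at each intermediate vertex $v_i$ (each of which emits an edge) peels off one summand at a time to give $a_w\le a_{v_{n-1}}\le\cdots\le a_{v_0}=a_v$ in $M(E)$, so $a_w\in I(H)$ because $I(H)$ is closed under summands, i.e. $w\in J$. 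For saturation: if $s^{-1}(v)\ne\varnothing$ and $r(s^{-1}(v))\subseteq J$, then $a_v=\sum_{s(e)=v}a_{r(e)}$ is a finite sum of elements of $I(H)$, hence lies in $I(H)$ since $I(H)$ is a submonoid, so $v\in J$. Thus $J\in\mathcal{H}_E$.

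Since trivially $H\subseteq J$, minimality of the hereditary saturated closure gives $\overline{H}\subseteq J$, in particular $\overline{H}\subseteq I(H)$; combined with the obvious $I(H)\subseteq I(\overline{H})$ this yields the first assertion $I(H)=I(\overline{H})$. To finish I would upgrade $\overline{H}\subseteq J$ to an equality by showing that $\overline{H}$ and $J$ generate the same order-ideal: $J\subseteq I(H)$ forces $I(J)\subseteq I(H)$, while $H\subseteq\overline{H}\subseteq J$ forces $I(H)\subseteq I(\overline{H})\subseteq I(J)$, whence $I(J)=I(\overline{H})$. As the assignment $H'\mapsto I(H')$ is a bijection $\mathcal{H}_E\to\mathcal{L}(M(E))$ by Proposition~\ref{prop:order-ideals-Hersats}, and both $J$ and $\overline{H}$ lie in $\mathcal{H}_E$, we conclude $J=\overline{H}$, which is the second assertion.

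The only step that is not pure formal book-keeping is the verification that $J$ is hereditary, and the subtlety there is exactly that ``$v\ge w$'' is a statement about the existence of a path, not one of the defining relations of $M(E)$, so one must descend along such a path and invoke the relation $a_x=\sum_{s(e)=x}a_{r(e)}$ at each step to extract $a_w\le a_v$. Everything else --- all four inclusions making up the two displayed equalities --- is routine manipulation of order-ideals together with the already-established isomorphism $\mathcal{H}_E\cong\mathcal{L}(M(E))$. One could instead prove $\overline{H}\subseteq I(H)$ by a direct induction on the layers $\Lambda_n(H)$ of the closure, but I would avoid this, since once $J$ is known to be hereditary and saturated that computation is subsumed.
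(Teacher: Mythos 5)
Your argument is correct and complete. The paper itself gives no proof of this lemma --- it is quoted from \cite{AMFP} and \cite{APS} --- so there is nothing internal to compare against, but your route is the standard one: verify that $J=I(H)\cap E^0$ is hereditary (by descending along a path and peeling off summands via the relation $a_x=\sum_{s(e)=x}a_{r(e)}$, using that order-ideals are closed under summands) and saturated (using closure under sums), deduce $\overline{H}\subseteq J$ by minimality, and then obtain both equalities by squeezing $I(J)=I(H)=I(\overline{H})$ and invoking the injectivity of $H'\mapsto I(H')$ on $\mathcal{H}_E$ from Proposition~\ref{prop:order-ideals-Hersats}. That last appeal is legitimate within this paper's ordering of results (the lattice isomorphism is stated immediately beforehand); in the original source these facts are proved together, so a fully self-contained treatment would replace it by the direct induction on the layers $\Lambda_n(H)$ that you mention and set aside.
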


This means that $I(H)$ is generated as a monoid by the set $\overline{H}=I(H)\cap E^0$. We can improve this result, as follows

\begin{lem}\label{lem:OrderIdealGenerator}
Let $E$ be a graph, and let $H$ be a hereditary subset of $E^0$. Then, the order-ideal $I(H)$ is generated as a monoid by the set $\{a_v : v\in H\}$.
\end{lem}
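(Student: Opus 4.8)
The statement to prove is: if $H$ is a hereditary subset of $E^0$, then the order-ideal $I(H)$ is generated as a monoid by $\{a_v : v\in H\}$. The plan is to combine the preceding lemma, which tells us that $I(H)$ is generated as a monoid by $\{a_w : w\in \overline{H}\}$ where $\overline{H}=\bigcup_{n\ge 0}\Lambda_n(H)$ is the hereditary saturated closure, with an induction on the layers $\Lambda_n(H)$ of the saturation process. Since $H$ is already hereditary we have $\Lambda_0(H)=T(H)=H$, so the issue is only to show that each new vertex $v\in\Lambda_n(H)\setminus\Lambda_{n-1}(H)$ satisfies $a_v\in\langle a_w : w\in H\rangle$, the submonoid of $M(E)$ generated by $\{a_w:w\in H\}$.

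\textbf{Key steps.} First I would recall that $\overline{H}=I(H)\cap E^0$ and that, by the previous lemma, $I(H)$ is generated as a monoid by $\{a_v : v\in\overline{H}\}$; hence it suffices to prove $a_v\in\langle a_w : w\in H\rangle$ for every $v\in\overline{H}$. Second, I would set up the induction: for $v\in\Lambda_0(H)=H$ this is trivial. For the inductive step, suppose the claim holds for all vertices in $\Lambda_{n-1}(H)$, and let $v\in\Lambda_n(H)\setminus\Lambda_{n-1}(H)$. By definition of $\Lambda_n$, $v$ emits edges and $r(s^{-1}(v))\subseteq\Lambda_{n-1}(H)$. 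Applying the defining relation (\ref{(M)}) of the graph monoid at $v$, we get
$$a_v=\sum_{\{e\in E^1\mid s(e)=v\}} a_{r(e)},$$
and every $r(e)$ appearing on the right lies in $\Lambda_{n-1}(H)$, so by the induction hypothesis each $a_{r(e)}\in\langle a_w : w\in H\rangle$; since this set is a submonoid, the sum lies in it as well, giving $a_v\in\langle a_w : w\in H\rangle$. This closes the induction, and therefore every generator $a_v$ with $v\in\overline{H}$ of $I(H)$ already lies in $\langle a_w : w\in H\rangle$, so $I(H)=\langle a_w : w\in H\rangle$.

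\textbf{Main obstacle.} There is essentially no serious obstacle here; the only point requiring a little care is making sure the induction is run on the correct quantity — namely the filtration $(\Lambda_n(H))_n$ of $\overline{H}$ rather than on $H$ itself — and invoking the fact (from the cited lemma) that $I(H)$ is already known to be monoid-generated by $\overline{H}$, so that we only need to rewrite the "saturation" generators in terms of the original hereditary set. One should also note explicitly that the relation used is exactly relation (\ref{(M)}) holding in $M(E)=\mathbb{F}/{\sim}$, which is legitimate since $v$ emits edges. The argument is otherwise a routine finite induction.
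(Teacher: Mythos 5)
Your proof is correct and follows essentially the same route as the paper: induction along the filtration $(\Lambda_n(H))_n$ of the hereditary saturated closure, using the defining relation of $M(E)$ at each newly saturated vertex to rewrite $a_v$ in terms of generators from the previous layer. Nothing to add.
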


\begin{proof} Take $v\in \Lambda _1 (H)$. Then $r(s^{-1}(v))$ is a finite subset of $H$, and so
$a_v= \sum _{e\in s^{-1}(v)}  a_{r(e)} $ belongs to the submonoid generated by $\{a_w : w\in H\}$.
By induction, the same happens for a vertex in $\Lambda_n(H)$ for all $n\ge 1$. Since $\ol{H}= \bigcup_{n=0}^{\infty} \Lambda_n (H)$,
and $I(H)$ is generated as a monoid by $\ol{H}$, we obtain the result.
\end{proof}
 
Thus, we can conclude:
 
\begin{lem}\label{lem:Her} 
Let $E$ be a directed graph and let $H$ be a hereditary subset of $E^0$. Then the order-ideal $I(H)$ generated by $H$ is isomorphic to the graph monoid of the restriction graph $E_H$.
\end{lem}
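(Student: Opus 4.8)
The plan is to exhibit an explicit isomorphism between $I(H)$ and the graph monoid $M(E_H)$ of the restriction graph, using the description of $M(E)$ as $\FF/{\sim}$. First I would recall that by Lemma \ref{lem:OrderIdealGenerator} the order-ideal $I(H)$ is generated as a monoid by $\{a_v : v\in H\}$, while $M(E_H)$ is by definition the monoid $\FF_H/{\sim_H}$, where $\FF_H$ is the free commutative monoid on $H$ and $\sim_H$ is the congruence generated by the relations $v = {\bf r}_H(v)$ for those $v\in H$ emitting edges in $E_H$. The key observation is that, since $H$ is hereditary, for every $v\in H$ one has $s_E^{-1}(v) = s_{E_H}^{-1}(v)$ and all ranges $r(e)$ of these edges lie in $H$; hence the defining relations of $M(E_H)$ are exactly the defining relations of $M(E)$ involving only vertices of $H$, and the rewriting rule $\rightarrow_1$ restricted to elements of $\FF_H$ never leaves $\FF_H$.

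Concretely, I would define a monoid homomorphism $\Psi\colon M(E_H)\to I(H)$ by sending the generator $a_v^{E_H}$ to $a_v\in I(H)$ for $v\in H$; this is well-defined because each relation $a_v^{E_H}=\sum_{e\in s_{E_H}^{-1}(v)}a_{r(e)}^{E_H}$ maps to the relation $a_v=\sum_{e\in s_E^{-1}(v)}a_{r(e)}$, valid in $M(E)$ since $s_E^{-1}(v)=s_{E_H}^{-1}(v)$. It is surjective by Lemma \ref{lem:OrderIdealGenerator}. For injectivity I would go through $\FF_H\into\FF$: let $\pi\colon\FF\to M(E)$ and $\pi_H\colon\FF_H\to M(E_H)$ be the quotient maps, and note $\Psi\circ\pi_H = \pi|_{\FF_H}$. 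So I must show that if $\alpha,\beta\in\FF_H$ with $\alpha\sim\beta$ in $\FF$, then already $\alpha\sim_H\beta$ in $\FF_H$. By Lemma \ref{lem:lem4.3AMFP} there is $\gamma\in\FF$ with $\alpha\rightarrow\gamma$ and $\beta\rightarrow\gamma$; since every step $\delta\rightarrow_1\delta'$ starting from an element of $\FF_H$ replaces some $x\in H$ that emits edges by ${\bf r}(x)=\sum_{e\in s^{-1}(x)}r(e)$, and all these $r(e)$ lie in $H$ by heredity, the whole string stays inside $\FF_H$; thus $\gamma\in\FF_H$ and the reductions $\alpha\rightarrow\gamma$, $\beta\rightarrow\gamma$ are reductions in $E_H$, whence $\alpha\sim_H\beta$ by Lemma \ref{lem:lem4.3AMFP} applied to $E_H$. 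This shows $\Psi$ is injective, hence an isomorphism.

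The main obstacle, such as it is, is bookkeeping around the two congruences: one must be careful that $\sim_H$ really is the restriction of $\sim$ to $\FF_H$ and not something coarser, and the crucial input making this work is precisely that $H$ is hereditary, so that the confluence/rewriting argument via Lemmas \ref{lem:lem4.3AMFP} and \ref{lem:OrderIdealGenerator} never produces vertices outside $H$. Once that is in place, matching up generators and relations is routine. (Alternatively, one could invoke Proposition \ref{prop:order-ideals-Hersats} together with the identification of the restriction graph $E_{\overline H}$, but reducing to a hereditary $H$ via Lemma \ref{lem:OrderIdealGenerator} avoids having to pass to the saturated closure.)
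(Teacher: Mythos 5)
Your proof is correct and follows essentially the same route as the paper's: the same generator-to-generator homomorphism, surjectivity via Lemma \ref{lem:OrderIdealGenerator}, and injectivity via the confluence Lemma \ref{lem:lem4.3AMFP} together with the observation that heredity of $H$ keeps every rewriting step inside $\FF_H$. You merely spell out the bookkeeping (e.g.\ $s_E^{-1}(v)=s_{E_H}^{-1}(v)$ for $v\in H$) that the paper leaves implicit.
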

\begin{proof}
Since $H$ is a hereditary subset of $E^0$, the map
$$
\begin{array}{cccc}
\phi: & M(E_H) &\rightarrow   & I(H)  \\
 & a_v & \mapsto  & a_v  
\end{array}
$$
is a well-defined monoid homomorphism. Moreover, $\phi$ is an onto map by Lemma \ref{lem:OrderIdealGenerator}.

To show it is injective, 
let $\alpha $ and $\beta$ be elements in the free commutative monoid generated by $H$, and assume that the elements of 
$M(E)$ represented by $\alpha$ and $\beta $ agree. By Lemma \ref{lem:lem4.3AMFP}, there exists $\gamma \in \mathbb{F}$ such that $\alpha \to \gamma $ and $\beta \to \gamma $. 
By the definition of the relation 
$\to$ and the fact that $H$ is hereditary, it follows that the vertices appearing in $\gamma$ belong to $H$, and that $\alpha \to \gamma $ and $\beta \to \gamma $ in $M(E_H)$. 
Hence, $\alpha$ and $\beta$ represent the same element of $M(E_H)$. 
\end{proof}

Given an order-ideal $S$ of a monoid $M$ we define a congruence
$\sim _S$ on $M$ by setting $a\sim _S b$ if and only if there
exist $e,f\in S$ such that $a+e=b+f$. Let $M/S$ be the factor
monoid obtained from the congruence $\sim _S$; see e.g. \cite{AGOP}.

\begin{lem}[{\cite[Lemma 6.6]{AMFP}}] Let $E$ be a row-finite graph. For a saturated
hereditary subset $H$ of $E^0$, consider the order-ideal
$I(H)$ associated with $H$. Then there is a natural monoid isomorphism $M(E)/I(H)\cong
M(E/H)$.
\end{lem}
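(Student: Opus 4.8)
The plan is to use the presentations of $M(E)$ and $M(E/H)$ by generators and relations and to build mutually inverse monoid homomorphisms between $M(E)/I(H)$ and $M(E/H)$. First I would define a monoid homomorphism $\rho\colon M(E)\to M(E/H)$ on generators by $\rho(a_v)=a_v$ for $v\in E^0\setminus H$ and $\rho(a_v)=0$ for $v\in H$. To see that $\rho$ is well defined I must check that it respects each defining relation $a_v=\sum_{s(e)=v}a_{r(e)}$ of $M(E)$. If $v\in H$, then heredity of $H$ forces $r(e)\in H$ for every $e\in s^{-1}(v)$, so both sides are sent to $0$. If $v\notin H$ emits edges in $E$, I split $s^{-1}(v)$ according to whether $r(e)\in H$ or not; the edges with range in $H$ contribute $0$, so the image of the relation reads $a_v=\sum_{s(e)=v,\, r(e)\notin H}a_{r(e)}$. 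This is exactly the defining relation of $M(E/H)$ at $v$, provided $v$ emits an edge in $E/H$; but if it did not, then $r(s^{-1}(v))\subseteq H$, and saturation of $H$ would give $v\in H$, a contradiction. Hence $\rho$ is a well-defined monoid homomorphism.

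Next I would factor $\rho$ through $M(E)/I(H)$. By Lemma \ref{lem:OrderIdealGenerator}, $I(H)$ is generated as a monoid by $\{a_v : v\in H\}$, and $\rho$ kills each such generator, so $\rho(x)=0$ for all $x\in I(H)$. Consequently, if $a\sim_{I(H)} b$, say $a+e=b+f$ with $e,f\in I(H)$, then $\rho(a)=\rho(b)$; thus $\rho$ induces a monoid homomorphism $\overline{\rho}\colon M(E)/I(H)\to M(E/H)$ with $\overline{\rho}([a_v])=a_v$ for $v\notin H$. In the opposite direction I define $\sigma\colon M(E/H)\to M(E)/I(H)$ on generators by $\sigma(a_v)=[a_v]$ for $v\in E^0\setminus H$. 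To verify the relations, note that for $v$ emitting edges in $E/H$ one has, in $M(E)$,
$$a_v=\sum_{s(e)=v,\, r(e)\in H}a_{r(e)}+\sum_{s(e)=v,\, r(e)\notin H}a_{r(e)},$$
and the first summand lies in $I(H)$; passing to $M(E)/I(H)$ yields $[a_v]=\sum_{s(e)=v,\, r(e)\notin H}[a_{r(e)}]$, which is the image under $\sigma$ of the defining relation of $M(E/H)$. Hence $\sigma$ is a well-defined monoid homomorphism.

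Finally I would check that $\overline{\rho}$ and $\sigma$ are mutually inverse by evaluating on generators: $\overline{\rho}(\sigma(a_v))=\overline{\rho}([a_v])=a_v$ for $v\notin H$, so $\overline{\rho}\circ\sigma=\id$; and $\sigma(\overline{\rho}([a_v]))=\sigma(a_v)=[a_v]$ for $v\notin H$, while for $v\in H$ we have $\overline{\rho}([a_v])=0$ and $\sigma(0)=0=[a_v]$ because $a_v\in I(H)$, so $\sigma\circ\overline{\rho}=\id$. This produces the desired isomorphism, which is natural in the sense that it is induced by the inclusion $E^0\setminus H\hookrightarrow E^0$ of vertex sets. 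I expect the only genuinely delicate point to be the well-definedness of $\rho$ on $M(E)$: the case analysis guaranteeing that the relations of $M(E/H)$ are faithfully reproduced, which is precisely where the saturation hypothesis on $H$ is used. An alternative but more computational route would bypass the presentations and instead describe the congruence $\sim_{I(H)}$ directly through the rewriting relation $\to_1$ of Definition \ref{binary}, comparing it with the congruence $\sim$ defining $M(E/H)$.
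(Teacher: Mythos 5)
Your argument is correct: the well-definedness of $\rho$ (using saturation) and of $\sigma$ (using that the edges with range in $H$ contribute an element of $I(H)$, which vanishes in the quotient), together with the check on generators, do establish the isomorphism. Note that the present paper does not prove this lemma but quotes it from \cite[Lemma 6.6]{AMFP}; your presentation-by-generators-and-relations argument is essentially the standard proof given there, so there is nothing further to compare.
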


\subsection{K-Theory for rings.}

Here, we recall a few elements on K-Theory for Leavitt path algebras, that will be necessary in the sequel.

For a ring $R$ (with local units), let $M_{\infty}(R)$ be the
directed union of $M_n(R)$ ($n\in\mathbb N$), where the transition
maps $M_n(R)\to
M_{n+1}(R)$ are given by $x\mapsto \left( \smallmatrix x&0\\
0&0\endsmallmatrix \right)$. We define $\mathcal V(R)$ to be the set of
isomorphism classes (denoted $[P]$) of finitely generated
projective left $R$-modules, and we endow $\mathcal V(R)$ with the
structure of a commutative monoid by imposing the operation
$$[P]+ [Q] := [P\oplus Q]$$ for any isomorphism classes $[P]$ and
$[Q]$. Equivalently \cite[Chapter 1]{Ros}, $\mathcal V(R)$ can be viewed
as the set of equivalence classes $\mathcal V(e)$ of idempotents $e$ in
$M_\infty(R)$ with the operation
$$\mathcal V(e)+\mathcal V(f) :=
\mathcal V\bigl(  \left( \smallmatrix e&0\\ 0&f
\endsmallmatrix \right) \bigr)$$
for idempotents $e,f\in M_\infty(R)$. The group $K_0(R)$ of a
ring $R$ with local units is the universal group of $\mathcal V(R)$. Recall that, as
any universal group of a commutative monoid, the group $K_0(R)$ has a
standard structure of partially pre-ordered abelian group. The set
of positive elements in $K_0(R)$ is the image of $\mathcal V(R)$ under the
natural monoid homomorphism $\mathcal V(R)\to K_0(R)$.\vspace{.2truecm}

Let $E=(E^0,E^1, r, s)$ be a  graph, and let $K$ be a field. We
define the {\em Leavitt path algebra} $L_K(E)$ associated with $E$
as the $K$-algebra generated by a set $\{v\mid v\in E^0\}$ of
pairwise orthogonal idempotents, together with a set of variables
$\{e,e^*\mid e\in E^1\}$, which satisfy the following relations:

(1) $s(e)e=er(e)=e$ for all $e\in E^1$.

(2) $r(e)e^*=e^*s(e)=e^*$ for all $e\in E^1$.

(3) $e^*e'=\delta _{e,e'}r(e)$ for all $e,e'\in E^1$.

(4) $v=\sum _{\{ e\in E^1\mid s(e)=v \}}ee^*$ for every $v\in E^0$
that emits edges.

Note that the relations above imply that $\{ee^*\mid e\in E^1\}$ is
a set of pairwise orthogonal idempotents in $L_K(E)$. In
general the algebra $L_K(E)$ is not unital, but it has a set of local units given by
$\{ \sum _{v\in F} v  \}$, where $F$ ranges  on all finite subsets of $E^0$. 
So, the above facts apply.

Let $E$ be a graph. For any subset $H$ of $E^0$, with hereditary saturated closure $\overline{H}$, we will denote by $\frak I (H)$ the ideal of $L_K(E)$ generated by $H$.

\begin{theor}[{\cite[Theorem 3.5]{AMFP}}] Let $E$ be a row-finite graph. Then there is a natural
monoid isomorphism $\mathcal V(L_K(E))\cong M(E)$.
\end{theor}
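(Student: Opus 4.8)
The plan is to write down the isomorphism explicitly on generators, reduce to the case of a finite graph, and then let a universal-construction theorem of Bergman do the computation. First I would define a monoid homomorphism $\phi\colon M(E)\to\mathcal V(L_K(E))$ by $a_v\mapsto [L_K(E)v]$, the class of the projective left module generated by the idempotent $v\in E^0$, and check that it is well defined, i.e.\ that relations~\eqref{(M)} are respected. If $v$ emits edges, relation~(4) writes $v=\sum_{s(e)=v}ee^*$ as a sum of idempotents which are pairwise orthogonal by~(3); and relations~(1)--(3) show that right multiplication by $e$ is an isomorphism of left modules $L_K(E)ee^*\to L_K(E)r(e)$ with inverse right multiplication by $e^*$. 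Hence $L_K(E)v\cong\bigoplus_{s(e)=v}L_K(E)r(e)$, so $[L_K(E)v]=\sum_{s(e)=v}[L_K(E)r(e)]$ in $\mathcal V(L_K(E))$, which is precisely relation~\eqref{(M)}; thus $\phi$ factors through $\mathbb{F}/{\sim}=M(E)$.

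Next I would reduce to finite graphs. Since $E$ is row-finite, it is the direct limit of its finite \emph{complete} subgraphs $F$ (finite subgraphs with $s^{-1}(v)\subseteq F^1$ for all $v\in F^0$), and under the induced maps one has $L_K(E)=\varinjlim_F L_K(F)$. Both $\mathcal V(-)$ and $M(-)$ commute with these filtered colimits (for $\mathcal V$ this is the standard fact that nonstable $K$-theory commutes with filtered colimits of rings with local units), and $\phi$ is natural in $E$; so it suffices to prove that $\phi$ is an isomorphism when $E$ is finite, and in particular surjectivity of $\phi$ need not be addressed separately.

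For $E$ finite I would realize $L_K(E)$ as a Bergman universal construction. Start with the finite-dimensional semisimple algebra $A=\prod_{v\in E^0}K$, whose distinguished orthogonal idempotents are the vertices and for which $\mathcal V(A)$ is the free commutative monoid on $E^0$. For each vertex $v$ emitting edges, $Av$ and $\bigoplus_{s(e)=v}A\,r(e)$ are finitely generated projective $A$-modules; adjoin to $A$, universally and simultaneously over all such $v$, an isomorphism $Av\cong\bigoplus_{s(e)=v}A\,r(e)$. The components of these isomorphisms and of their inverses are exactly the generators $e$ and $e^*$, and the defining relations of the resulting universal algebra are precisely the Cuntz--Krieger relations~(1)--(4); hence this universal algebra is $L_K(E)$. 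Bergman's theorem on the effect of such adjunctions on $\mathcal V$ then gives that $\mathcal V(L_K(E))$ is the free commutative monoid on $E^0$ modulo the relations $[v]=\sum_{s(e)=v}[r(e)]$ over the non-sinks $v$ --- that is, $M(E)$ --- and tracing the identification back shows the induced isomorphism is inverse to $\phi$.

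The one genuinely substantial point is the identification of $L_K(E)$ with Bergman's universal algebra: one must match the Cuntz--Krieger relations with the universal property ``freely make $Av$ isomorphic to $\bigoplus_{s(e)=v}A\,r(e)$'' and verify that Bergman's hypotheses hold, namely that the modules in play are finitely generated projective over the semisimple ring $A$ (which is automatic). Granting this, the passage to $\mathcal V$ is a direct application of Bergman's theorem, and everything else --- well-definedness of $\phi$, the direct-limit reduction, and reading off the monoid presentation --- is routine. An alternative, more computational route to the finite case bypasses Bergman by exploiting the $\mathbb{Z}$-grading of $L_K(E)$: its degree-zero subalgebra is a direct limit of finite-dimensional semisimple algebras with transition maps governed by the edge matrix of $E$, from which $\mathcal V(L_K(E))$ can be recovered together with the grading; but the bookkeeping there is heavier, so I would prefer the Bergman argument.
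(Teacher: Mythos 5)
Your proposal is correct and follows essentially the same route as the original proof of this result in \cite{AMFP}: the statement is only cited here, and the proof in \cite[Theorem 3.5]{AMFP} proceeds exactly as you describe, by reducing to finite complete subgraphs via direct limits (both $\mathcal V$ and $M$ commuting with filtered colimits) and then realizing $L_K(E)$ as Bergman's universal algebra obtained from $\prod_{v\in E^0}K$ by adjoining isomorphisms $Av\cong\bigoplus_{s(e)=v}A\,r(e)$, so that Bergman's theorem yields the presented monoid $M(E)$.
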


Moreover, by \cite[Theorem 3.5]{AMFP}, \cite[Theorem 5.2]{AMFP}, \cite[Lemma 6.6]{AMFP} and \cite[Lemma 2.3(1)]{APS}, we conclude that

\begin{lem}
\label{lem:quotientsallright}
Let $E$ be a row-finite graph, and let $H$ a hereditary subset of $E^0$. Then, for any field $K$ we have that
$$\mathcal V(L_K(E))/\mathcal V(\frak I (H))\cong  \mathcal V(L_K(E/H))  \cong  \mathcal V(L_K(E)/\frak I (H)).$$
\end{lem}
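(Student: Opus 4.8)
\emph{Proof plan.} The statement amounts to stringing together results already recorded above: the identification $\mathcal V(L_K(F))\cong M(F)$ for any row-finite graph $F$ (\cite[Theorem 3.5]{AMFP}), the behaviour of graph monoids and of Leavitt path algebras under passing to the quotient by a saturated hereditary set (\cite[Lemma 6.6]{AMFP} together with its ring-theoretic analogue \cite[Lemma 2.3(1)]{APS}), and the correspondence carrying the ideal $\frak I(H)$ to the order-ideal $I(H)$ of $M(E)$ (\cite[Theorem 5.2]{AMFP}). First I would reduce to the case in which $H$ is saturated: the ideal $\frak I(H)$ of $L_K(E)$ and the order-ideal $I(H)$ of $M(E)$ depend only on the hereditary saturated closure $\overline H$ of $H$ (by \cite[Lemma 2.3(1)]{APS} and \cite[Lemma 2.1]{APS} respectively), so it suffices to treat the case $H=\overline H$.

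Assuming $H$ saturated hereditary, the argument runs as follows. By \cite[Theorem 3.5]{AMFP}, $\mathcal V(L_K(E))\cong M(E)$ and $\mathcal V(L_K(E/H))\cong M(E/H)$. Under the first of these isomorphisms, \cite[Theorem 5.2]{AMFP} identifies the order-ideal $\mathcal V(\frak I(H))$ of $\mathcal V(L_K(E))$ with the order-ideal $I(H)$ of $M(E)$, so that $\mathcal V(L_K(E))/\mathcal V(\frak I(H))\cong M(E)/I(H)$; and \cite[Lemma 6.6]{AMFP} gives $M(E)/I(H)\cong M(E/H)$. Together these yield the first isomorphism of the statement. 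For the second, \cite[Lemma 2.3(1)]{APS} supplies a $K$-algebra isomorphism $L_K(E)/\frak I(H)\cong L_K(E/H)$, so applying \cite[Theorem 3.5]{AMFP} once more gives $\mathcal V(L_K(E)/\frak I(H))\cong \mathcal V(L_K(E/H))\cong M(E/H)$. Hence all three monoids are isomorphic to $M(E/H)$, as required.

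I do not anticipate a genuine obstacle here: all of the input is contained in the cited results. The only point requiring care is to verify that the various natural maps involved — the map $\mathcal V(\frak I(H))\to\mathcal V(L_K(E))$, the quotient maps $M(E)\to M(E)/I(H)$ and $L_K(E)\to L_K(E)/\frak I(H)$, and the graph-monoid isomorphisms of \cite[Theorem 3.5]{AMFP} — are mutually compatible, so that the composite isomorphisms exhibited above are the natural ones. Since each of the cited isomorphisms is itself natural in the appropriate sense, this is a routine diagram chase.
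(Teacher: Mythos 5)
Your argument is exactly the paper's: the lemma is stated there as an immediate consequence of \cite[Theorem 3.5]{AMFP}, \cite[Theorem 5.2]{AMFP}, \cite[Lemma 6.6]{AMFP} and \cite[Lemma 2.3(1)]{APS}, which are precisely the results you assemble (your preliminary reduction to saturated $H$ is a reasonable extra care point, not a divergence). The proposal is correct and follows the same route.
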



\section{Representing finitely generated regular refinement monoids}\label{Sect:ReprFinGen}

Dobbertin showed in \cite{Dobb84} that every finitely generated conical regular refinement monoid can be represented as a partial order 
of finitely generated abelian groups (see also \cite{AP}). 
Thus, in order to represent a finitely generated conical regular refinement monoid as a graph monoid, it suffices 
to show that some basic diagrams of abelian groups and homomorphisms of groups can be represented as graph monoids. 
In this section, we will prove that this is possible without further restrictions. In order to make clearer the argument, 
we will divide this task in several steps. \vspace{.2truecm}

First, we will show that any finitely generated abelian group can be represented using a graph monoid.
It is worth to remark here that there are easy constructions of finite graphs realizing the monoids of the 
form $H\cup \{0\}$, where $H$ is a finitely generated abelian group, see for instance \cite[p. 27]{Areal}, where a
construction of the second-named author is outlined.  

We use infinite graphs in Lemma 3.1 because this is definitely needed
to realize a monoid coming from a map between {\it two} finitely generated abelian groups, and we need to prepare 
the ground for that result. The second-named author and Fred Wehrung have shown that in the case where $G=H=\Z_2$ and $\varphi \colon H\to G$ is 
the trivial map sending everything to $0$, the corresponding regular refinement monoid $M(\varphi)$ cannot be realized by using a finite graph. 
However we show in Proposition \ref{L:represmaps} that all the monoids of this sort can be realized using infinite graphs.

\begin{lem}\label{L:represgroup}
Let $H$ be a finitely generated abelian group. Then, $H$ is representable as the semigroup $M(E)^*$ associated to a (not necessarily finite) row-finite directed graph $E$.
\end{lem}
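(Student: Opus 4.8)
The plan is to realize an arbitrary finitely generated abelian group $H \cong \Z^n \oplus \Z_{k_1} \oplus \cdots \oplus \Z_{k_m}$ as $M(E)^*$ by building $E$ as a disjoint union (or, more precisely, a single graph whose vertices all feed into a common ``core'') of pieces handling the free part and each torsion summand. First I would dispose of the two basic building blocks. For a single copy of $\Z$: take the graph with one vertex $v$ and a single loop at $v$; then $M(E) = \{0\} \cup \Z^+$? That is wrong — a bare loop gives $a_v = a_v$, i.e. $M(E) = \Z^+$, whose nonzero part is not a group. The right device for $\Z$ is a ``doubled'' construction: one needs $a_v$ to be an idempotent generating a group, so I would instead use the standard trick of a vertex $v$ with two loops (so $a_v = 2a_v$, making $a_v$ regular) together with further structure, or better, follow the infinite-graph construction alluded to before the lemma. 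For $\Z_k$ I would use a cycle of length related to $k$ with appropriate exits forcing the relation $k a = 0$ on top of an idempotent. The cleanest uniform approach: first construct, for each cyclic summand, a row-finite graph $E_i$ with a distinguished sink-free behaviour so that $M(E_i)^* \cong \Z$ or $\Z_{k_i}$ as a semigroup (necessarily these $M(E_i)$ are the ``$G \cup \{0\}$'' monoids), then glue.

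The key step is the gluing. Suppose I have row-finite graphs $E_1, \dots, E_r$ with $M(E_j) = \za{G_j}$ (i.e. $G_j$ with an external zero adjoined), each $G_j$ cyclic, and I want $M(E)^* \cong G_1 \oplus \cdots \oplus G_r = H$. I would introduce a new vertex $w$ that emits exactly one edge into a chosen vertex $v_j$ of each $E_j$, i.e. $a_w = a_{v_1} + \cdots + a_{v_r}$, where $a_{v_j}$ is the identity element $e_{G_j}$ of $G_j$ inside $M(E_j)$. Because each $a_{v_j}$ is an idempotent (being the identity of a group realized as $M(E_j)^*$) and the $G_j$-components are ``independent'' across the disjoint pieces, a refinement-monoid computation (using Lemma \ref{lem:lem4.3AMFP} and the confluence of $\to$) should show that the submonoid generated by all the $a_{v_j}$ together with $a_w$ — hence all of $M(E)$ once one checks hereditarily that no new identifications creep in — has nonzero part isomorphic to $\bigoplus_j G_j$, with $a_w$ playing the role of the identity $(e_{G_1}, \dots, e_{G_r})$ of $H$. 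The point of adding $w$ (rather than taking a bare disjoint union, whose monoid would be $\za{G_1} \times \cdots \times \za{G_r}$ with many extra nonzero elements having some coordinates zero) is precisely to collapse the ``partial'' elements: in $M(E)$, an element like $a_{v_1}$ alone is still there, so in fact a bare disjoint union plus a common source is not quite enough either. I would therefore instead arrange the gluing so that \emph{every} vertex of every $E_j$ lies below $w$ and $w$ is below nothing new, and more importantly engineer each $E_j$ so that its own graph monoid already forces $a_{v_j}$ to absorb everything below it; the external zero of each $\za{G_j}$ then becomes the genuine zero of $M(E)$, and the product structure degenerates to a direct sum because of conicality of $M(E)$ and the absorption relations.

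The main obstacle I expect is exactly this last point: producing, for a single cyclic group $G$ (especially $G = \Z$), a row-finite graph with $M(E) = \za{G}$ and with a distinguished idempotent vertex that ``absorbs'' — this is where infinite graphs become essential, as the remark before the lemma warns (the finite-graph obstruction for $\Z_2$ with the trivial map is a symptom). For $\Z$ one natural candidate is the graph with vertices $v_0, v_1, v_2, \dots$, an edge $v_0 \to v_0$ and an edge $v_0 \to v_1$, and then a ``shift'' structure on the $v_i$ realizing the group completion; verifying $M(E)^* \cong \Z$ requires carefully analyzing the confluence of $\to_1$ on the free commutative monoid and checking that the only nonzero classes are indexed by $\Z$. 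For the torsion summands $\Z_k$ one uses a finite cycle of length $k$ with a single exit back to the absorbing vertex. Once the single-summand cases are in hand with the right absorption property, assembling them and checking (via Lemma \ref{lem:lem4.3AMFP} applied in the combined graph, together with heredity of each $E_j^0$ as in Lemma \ref{lem:Her}) that $M(E)^* \cong H$ should be routine bookkeeping: the order-ideals generated by the $E_j$ are the $\za{G_j}$, they are independent, and $M(E)$ is their ``join'' inside which the top identity $a_w$ identifies the various zeros.
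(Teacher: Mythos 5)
There is a genuine gap, and it is architectural rather than a missing detail. Your plan is to realize each cyclic summand separately as a graph $E_j$ with $M(E_j)=\za{G_j}$ and then glue the pieces by a new vertex $w$ that emits one edge into each piece. But in any such graph each $E_j^0$ remains a hereditary and saturated subset of $E^0$, so by Proposition \ref{prop:order-ideals-Hersats} and Lemma \ref{lem:Her} the $I(E_j^0)\cong\za{G_j}$ are proper nonzero order-ideals of $M(E)$; in particular $M(E)$ is not simple. A conical monoid whose nonzero part is a group is necessarily simple, so $M(E)^*$ cannot be a group no matter how each $E_j$ is engineered internally: the element $a_{v_1}$ (the identity of $G_1$ sitting inside its piece) is nonzero, and for it to be invertible in $M(E)^*$ with identity $a_{v_1}+\cdots+a_{v_r}$ you would need $a_{v_1}\geq a_{v_2}$, i.e.\ a path from $E_1$ into $E_2$ --- which your gluing never creates. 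You half-notice this (``$a_{v_1}$ alone is still there'') but the proposed remedy, ``absorption'' within each $E_j$, does not touch the problem, which lives \emph{between} the pieces. The paper's construction is the opposite of a disjoint union: it puts all $N+1$ core vertices into a single strongly connected graph (every $v_i$ emits edges to every $v_j$ and to $v_{N+1}$), so that $M(E)$ is simple, and arranges $2v\leq v$ at every vertex, so that $M(E)$ is regular and hence $M(E)^*$ is a group; only then does it solve the relations to identify that group as $H$, with $v_{N+1}$ encoding $-(v_1+\cdots+v_r)$ and infinite tails attached to the torsion vertices forcing $n_i v_i=e$ exactly.

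Two further points. Your building blocks are also not viable as described: a cycle of length $k$ forces $a_{v_1}=a_{v_2}=\cdots=a_{v_k}$ (each vertex equals its successor), not $k\,a=e$, and the ``shift structure'' for $\Z$ is not a construction. And you misread the remark preceding the lemma: finite graphs \emph{do} suffice to realize $\za{H}$ for a single finitely generated abelian group $H$; the infinite tails are used here only to prepare for Proposition \ref{L:represmaps}, where a homomorphism between two groups must be encoded and finite graphs provably fail (the $\Z_2\to\Z_2$ example).
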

\begin{proof}
We start by fixing notation. Using the Structure Theorem for Finitely Generated Abelian Groups, we can assume that 
$$H=\Z ^r\oplus \Z_{n_1}\oplus \cdots \oplus \Z_{n_s}$$
where $r,s \geq 0$, and $n_i\geq 1$ for all $i$. We define $N:=r+s$.

Now, we will define our graph. The graph $E$ has vertices 
$$E_1^0=\{v_1, \dots , v_{N+1}, v_i^j \, \, (r+1\leq i\leq N, j\geq 1)\} .$$
Instead of fixing what are the edges of $E$ extensively, we will express the relations that these edges define on the graph monoid $M(E_1)$, as follows:
\begin{enumerate}
\item $v_{N+1}=2v_{N+1}+\sum\limits_{i=1}^{r}v_i+\sum\limits_{i=r+1}^{N}n_{i-r}v_i$
\item For every $r+1\leq i\leq N$:
\begin{enumerate}
\item $v_i^1=n_{i-r}v_i+v_i^1+v_i^3$
\item for every $j\geq 1$, $v_i^{2j}=v_i^{2j-1}+v_i^{2j}$
\item for every $j\geq 1$, $v_i^{2j+1}=v_i^{2j}+v_i^{2j+1}+v_i^{2j+3}$
\end{enumerate}
\item For every $1\leq i\leq r$, $v_{i}=2v_{i}+\sum\limits_{j=1, j\ne i}^{r}v_j+\sum\limits_{j=r+1}^{N}n_{j-r}v_j+v_{N+1}$
\item For every $r+1\leq i\leq N$, $v_{i}=\sum\limits_{j=1}^{r}v_j+\sum\limits_{j=r+,1j\ne i}^{N}n_{j-r}v_j+(n_{i-r}+1)v_i+v_{N+1}+v_i^1$
\end{enumerate}

Let us carefully explain which are the properties enjoyed by the graph $E$:
\begin{enumerate}
\item[(a)] First, we will show a graphical representation of the part of $E$ described in point (2) above, for any vertex $v_i$ with $r+1\leq i\leq N$. Consider the subgraph $E_{i}$:\vspace{.2truecm}
\[
{
\def\labelstyle{\displaystyle}
\xymatrix{
{\bullet}^{v_i} \ar@/^8pt/ [r] & {\bullet}^{v_i^1}\dloopr{} \ar@/^8pt/ [l] ^{(n_{i-r})}  \ar@/^16pt/ [rr] & {\bullet}^{v_i^2}\dloopr{} \ar[l] & {\bullet}^{v_i^3}\dloopr{} \ar[l] \ar@/^16pt/ [rr] & {\bullet}^{v_i^4}\dloopr{} \ar[l] & {\bullet}^{v_i^5}\dloopr{} \ar[l] \ar@/^16pt/@{.} [rr] & \ar@{.}[l] & \ar@{.}[l] & \cdots
}}
\]
Notice that $E_{i}$ is transitive, and moreover every vertex in $E_{i}^0$ is basis point of at least two different simple closed paths. So, the monoid $M(E_{i})$ associated to $E_{i}$ is simple and regular (because in $M(E_{i})$ we have $2v_i\leq v_i$ and $2v_i^j \leq v_i^j$ for every $j\geq 1$).
\item[(b)] By relation (1) above, there are paths connecting $v_{N+1}$ with every vertex $v_i$ ($1\leq i\leq N$), and thus also with every vertex $v_i^j$ ($r+1\leq i\leq N, j\geq 1$) by relations (2) and (4) above. Moreover, in $M(E)$ we have $2v_{N+1}\leq v_{N+1}$.
\item[(c)] By relation (3) above, for every $1\leq k\leq r$, there are paths connecting $v_{k}$ with every vertex $v_i$ ($1\leq i\leq N+1$), and thus also with every vertex $v_i^j$ ($r+1\leq i\leq N, j\geq 1$) by relations (2) and (4) above. Moreover, in $M(E)$ we have $2v_{k}\leq v_{k}$.
\item[(d)] By relation (4) above, for every $r+1\leq k\leq N$, there are paths connecting $v_{k}$ with every vertex $v_i$ ($1\leq i\leq N+1$), and thus also with every vertex $v_i^j$ ($r+1\leq i\leq N, j\geq 1$) by relations (2) and (4) above. Moreover, in $M(E)$ we have $2v_{k}\leq v_{k}$.
\end{enumerate}

As a consequence:
\begin{enumerate}
\item[(e)] The graph $E$ is transitive, and thus $M(E)$ is a simple monoid.
\item[(f)] For every $v\in E_1^0$, $2v\leq v$ in $M(E)$, whence $M(E)$ is regular. Thus, since $M(E)$ is simple, we conclude that $M(E)^*$ is an abelian group.
\end{enumerate}

Now, we will identify this group up to isomorphism. By relation (1) above, the neutral element of $M(E)^*$ is
\begin{equation}\label{eq1}
e=v_{N+1}+\sum\limits_{i=1}^{r}v_i+\sum\limits_{i=r+1}^{N}n_{i-r}v_i.
\end{equation}
In particular, relations (1) and  (3) above simply say that $v_i=v_i+e$ for $i\in \{1, \dots ,r, N+1\}$. Now, by using relation (2) above, we have:
\begin{itemize}
\item $e=n_{i-r}v_i+v_i^3$ by relation (2a).
\item $e=v_i^{2j-1}$ for every $j\geq 1$ by (2b).
\item $e=v_i^{2j}+v_i^{2j+3}$ for every $j\geq 1$ by (2c), and then $e=v_i^{2j}$ for every $j\geq 1$ by the previous identity.
\end{itemize}
As a consequence, $e=v_i^j$ for every $r+1\leq i\leq N$ and every $j\geq 1$, and thus relation (2a) says that 
\begin{equation}\label{eq2}
e=n_{i-r}v_i \text{ for every } r+1\leq i\leq N. 
\end{equation}
Hence, $\langle v_{r+1},\dots ,v_{N}\rangle$ generates a copy of $\Z_{n_1}\oplus \cdots \oplus \Z_{n_s}$ into $M(E)^*$.\vspace{.2truecm}

Replacing equation (\ref{eq2}) in the corresponding places of equation (\ref{eq1}), we obtain
\begin{equation}\label{eq3}
e=v_{N+1}+\sum\limits_{i=1}^{r}v_i.
\end{equation}
Hence, $\langle v_1,\dots v_r, v_{N+1}\rangle$ generates a copy of $\Z^r$ into $M(E)^*$.  Summarizing, the semigroup of nonzero elements of $M(E)$ is isomorphic to $H$, as desired.
\end{proof}

\begin{rema}\label{R:inverses}
{\rm $\mbox{ }$
\begin{itemize}
 \item[(a)] In Lemma \ref{L:represgroup}, we are representing the group $H$ as a group generated by vertices. In order to ease operating with this representation, we need to identify the inverses of the vertices, seen as elements of the group. If we follow the notation of Lemma \ref{L:represgroup}, $\langle v_{r+1},\dots ,v_{N}\rangle$ generates a copy of $\Z_{n_1}\oplus \cdots \oplus \Z_{n_s}$ into $M(E)^*$, and $(n_i-1)v_i$ will be the symmetric of $v_i$ in $M(E)^*$ for $r+1\leq i\leq N$. Also, $\langle v_1,\dots v_r, v_{N+1}\rangle$ generates a copy of $\Z^r$ into $M(E)^*$. In this copy, $\{v_1,\dots, v_r\}$ are the free generators of the group, while $v_{N+1}$ help us to express $r$-tuples of $\Z^r$ with negative entries. To be precise, since $e=v_{N+1}+\sum\limits_{i=1}^{r}v_i$, for any $1\leq i\leq r$ the element 
$$v_1+\cdots +v_{i-1}+v_{i+1}+\cdots +v_r+v_{N+1}$$ 
will be the symmetric of $v_i$ in $M(E)^*$ for $1\leq i\leq r$. 
In order to simplify the notation in the sequel, when we work with a graph monoid $M(E)$, we will denote by $v^{-}$ the symmetric of the vertex $v\in E^0$ seen as an element in the archimedean component of $M(E)$ containing $v$.
\item[(b)] Note that in the isomorphism $M(E)^* \cong H = \Z ^r\oplus \Z_{n_1}\oplus \cdots \oplus \Z_{n_s}$, the vertices $v_1, \dots , v_N$ correspond to the canonical 
generators of $H$ as an abelian group. We will refer to this fact
saying that $v_1,\dots , v_N$ are the canonical generators of $H$. 
\item[(c)] We will work later in Section \ref{TopFree} with {\it semigroup} generators of a group. Observe that $\{ v_1,\dots , v_N, v_{N+1} \}$ is indeed a family of semigroup
generators of $H$. 
\end{itemize}}
\end{rema}

Let $G$ and $H$ be finitely generated abelian groups, and let $\varphi \colon H\to G$ be a group homomorphism. According to \cite{Dobb84}, there exists a regular refinement
monoid $M(\varphi)$ associated to $\varphi$. The monoid 
$M(\varphi)$ has exactly two prime idempotents $e$ and $f$, with $e+f= f$, $G_M[e]= H$, $G_M[f]= G$, and the map 
$\varphi_{f, e}\colon G_M[e]\to G_M[f]$ given by $\varphi _{f, e} (x) = x+f$ is exactly the map $\varphi$. We will show that $M(\varphi)$ is representable as a graph monoid.

\begin{prop}\label{L:represmaps}
Let $\varphi \colon H\to G$ be a homomorphism between finitely generated abelian groups $H$ and $G$, and let $M(\varphi)$ be the associated regular refinement monoid.
Then $M(\varphi)$ is representable as the monoid $M(E)$ of a (not necessarily finite) row-finite directed graph $E$.
\end{prop}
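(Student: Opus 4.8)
The plan is to build the graph $E$ for $M(\varphi)$ by gluing together two copies of the group-realizing graphs from Lemma~\ref{L:represgroup}, one for $H$ (the ``top'' component associated to the prime $e$) and one for $G$ (the ``bottom'' component associated to $f$), with a layer of connecting edges in between encoding the homomorphism $\varphi$. First I would fix, via the Structure Theorem, presentations $H=\Z^{r}\oplus\Z_{n_1}\oplus\cdots\oplus\Z_{n_s}$ and $G=\Z^{r'}\oplus\Z_{m_1}\oplus\cdots\oplus\Z_{m_{s'}}$, and take the graphs $E_H$, $E_G$ produced by Lemma~\ref{L:represgroup}, with canonical semigroup generators $v_1,\dots,v_N,v_{N+1}$ for $H$ and $w_1,\dots,w_{N'},w_{N'+1}$ for $G$ (Remark~\ref{R:inverses}). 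Since $M(E_G)$ is already the right graph monoid for $G$ alone, the bottom component stays as is; the top component needs its vertices to ``feed into'' $G$ in a way that makes the archimedean component of each $v_i$ collapse onto $H$ while simultaneously adding the relation $v_i+f=\varphi(v_i)$ inside the ideal generated by $G$. Concretely, I would modify relation~(1) (and the analogues (3),(4)) of the $H$-graph so that the ``big'' vertex $v_{N+1}$ also emits into a chosen vertex $f$ of the $G$-part (say $f=w_{N'+1}$, a regular idempotent of $M(E_G)^*$), and then add, for each canonical generator $v_i$ of $H$, edges from an auxiliary vertex realizing $v_i$ to a finite word in the $w_j$'s representing $\varphi(v_i)\in G$ (using the symmetric-vertex notation $w^-$ of Remark~\ref{R:inverses} to realize negative coordinates).

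The key steps, in order: (i) Write down $E$ explicitly as $E_H\sqcup E_G$ plus the connecting edges, exactly as a list of defining relations of $M(E)$, following the format used in Lemma~\ref{L:represgroup}. (ii) Observe that $G^0=E_G^0$ is a hereditary subset of $E^0$, so by Lemma~\ref{lem:Her} the order-ideal $I(G^0)$ is isomorphic to $M(E_G)$, hence $I(G^0)^*\cong G$ and its idempotent $f$ is a prime idempotent of $M(E)$. (iii) Pass to the quotient $M(E)/I(G^0)\cong M(E/G^0)$ by Lemma on quotients; check that $E/G^0$ is (up to adding/removing sinks that die in the monoid) the $H$-graph $E_H$, so the quotient is $H\cup\{0\}$, giving the second prime idempotent $e$ with $G_M[e]=H$ and $e+f=f$. (iv) Compute, inside $M(E)$, the effect of the connecting edges: the relations force $v_i = \varphi(v_i) + (\text{stuff in }I(G^0))$, i.e. $\chi$-type computations show $v_i+f=\varphi(v_i)$ in $M(E)$, so the transition map $\varphi_{f,e}(x)=x+f$ restricted to $G_M[e]=H$ is exactly $\varphi$. (v) Invoke Dobbertin's characterization (as recalled just before the Proposition): a regular conical refinement monoid with exactly two prime idempotents $e,f$, $e+f=f$, archimedean components $H$ and $G$, and transition map $\varphi$, is isomorphic to $M(\varphi)$; verify $M(E)$ is a refinement monoid (automatic, it is a graph monoid) and is regular (every vertex satisfies $2v\le v$ because both $E_H$ and $E_G$ have that property by Lemma~\ref{L:represgroup} and the connecting edges don't destroy it), with order-ideal lattice $\{0\}\subset I(G^0)\subset M(E)$ of length two, forcing exactly the two primes.

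The main obstacle I anticipate is step~(iv): arranging the connecting edges so that the single added relation per generator yields precisely $v_i+f=\varphi(v_i)$ in $M(E)$ \emph{without} accidentally collapsing $H$ further or identifying $v_i$ with $\varphi(v_i)$ already in the quotient $M(E)/I(G^0)$. The subtlety is that $\varphi(v_i)$ is a \emph{difference} of generators of $G$ in general, so one cannot literally write an edge relation $v_i = (\text{positive word})$; the trick (this is exactly why Lemma~\ref{L:represgroup} was set up with the extra vertex $v_{N+1}$ and the symmetric-vertex gadget $w^-$) is to route $v_i$ through an intermediate vertex $u_i$ with relation $u_i = v_i + u_i + (\text{positive representative of }\varphi(v_i)\text{ using }w_j,w_j^-,f)$, mimicking the ``$2j+1$'' relations of Lemma~\ref{L:represgroup} that made the auxiliary vertices equal to the neutral element; then in $M(E)/I(G^0)$ the term in $G$ vanishes and $u_i$ contributes nothing, while in $M(E)$ one reads off $v_i + f = \varphi(v_i)$. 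Verifying that this local modification keeps $E$ row-finite, keeps $G^0$ hereditary, keeps everything regular, and does not create spurious hereditary saturated sets is the bulk of the routine-but-careful checking; the rest follows formally from the cited structure theory.
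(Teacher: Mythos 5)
Your overall two-layer strategy (a graph realizing $H$, a graph realizing $G$, connecting edges encoding $\varphi$) is the same as the paper's, but the orientation of the connecting edges is reversed, and this is fatal rather than cosmetic. In $M(\varphi)$ we have $e+f=f$, i.e.\ $e\le f$, so the unique proper nonzero order-ideal is $H\cup\{0\}$; indeed $e+g=\varphi(e)+g=g$ for every $g\in G$, so $e\le g$ and $G\cup\{0\}$ is not hereditary under $\le$, hence not an order-ideal. Since order-ideals of a graph monoid correspond to hereditary saturated vertex sets (Proposition \ref{prop:order-ideals-Hersats}), any graph realizing $M(\varphi)$ must have the \emph{$H$-subgraph} as its hereditary saturated part, with edges flowing from the $G$-layer down into the $H$-layer. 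Your steps (ii)--(iii) do the opposite: making $E_G^0$ hereditary turns $I(G^0)\cong G\cup\{0\}$ into the proper order-ideal, which forces $f<e$ and produces a monoid of the form $M(\psi)$ for some $\psi\colon G\to H$, not $M(\varphi)$. The inconsistency already surfaces in your step (iv): a graph relation of the shape $v_i=\varphi(v_i)+(\text{stuff in }I(G^0))$ exhibits $a_{v_i}$ as a sum of elements of the order-ideal $I(G^0)$ and hence places $v_i$ inside it, collapsing the $H$-component; and the target identity $v_i+f=\varphi(v_i)$, whose right-hand side lies in $I(G^0)$, would likewise force $v_i\in I(G^0)$ because order-ideals are closed under summands. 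So the relation you want simply cannot hold in a monoid in which $G\cup\{0\}$ is a proper order-ideal missing $v_i$.

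The repair is exactly the paper's construction: keep the graph $E_1$ realizing $H$ as a hereditary saturated subgraph, and let each generator-vertex $w_i$ of the $G$-layer emit, besides the edges realizing $G$ as in Lemma \ref{L:represgroup} (with coefficients shifted by the normalized matrix entries $a_{j,i}$ of $\varphi$), edges \emph{into the $H$-layer} representing the inverse $v_i^{-}$ of $v_i$. Adding $v_i+w_i^{-}$ to both sides of the defining relation of $w_i$ and using $v_i+v_i^{-}=e$, $w_i+w_i^{-}=f$ and $e+f=f$ then yields $v_i+f=\sum_{j}a_{j,i}w_j=\varphi(v_i)$. Negative matrix entries are absorbed inside the $G$-layer via the $w^{-}$ gadget after normalizing the torsion rows, so no auxiliary vertices $u_i$ of the kind you propose are needed. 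As written, your construction does not realize $M(\varphi)$ and cannot be fixed without reversing the direction of the connecting edges.
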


\begin{proof}
We start by fixing notation. Using the Structure Theorem for Finitely Generated Abelian Groups, we can assume that 
$$H=\Z ^r\oplus \Z_{n_1}\oplus \cdots \oplus \Z_{n_s}$$
and
$$G=\Z ^t\oplus \Z_{m_1}\oplus \cdots \oplus \Z_{m_l},$$
where $r,s, t, l \geq 0$, and $n_i,m_j\geq 1$ for all $i,j$. We define $N:=r+s$ and $M:= t+l$. Moreover, if $M<N$, we can add $m_{l+1}=m_{l+2}=\cdots =m_{l+(N-M)}=1$, so that we are thinking 
$$G=\Z ^t\oplus \Z_{m_1}\oplus \cdots \oplus \Z_{m_l}\oplus \Z_1\oplus\cdots \oplus \Z_1.$$
Thus, without loss of generality, we can assume that $N\leq M$. Under this representation of $H$ and $G$, the homomorphism $\varphi$ is represented by a block matrix
$$A=
\left(
\begin{array}{cc}
  A_{1,1} & 0  \\
  A_{2,1}  & A_{2,2}  
\end{array}
\right)
$$
with entries labeled $a_{i,j}\in \Z$, and without loss of generality we can assume that the entries in blocks $A_{2,l}$ ($l=1,2$) satisfy $0< a_{i,j}\leq m_{i-t}$ for every $j$. 

We will define our graph $E$ in two layers, the first one representing the group $H$, while the second will represent simultaneously both the group $G$ and the homomorphism $\varphi$. \vspace{.2truecm}

To construct the first layer, we use Lemma \ref{L:represgroup} to build a graph $E_1$ such that $M(E_1)^*\cong H$, with $$E_1^0= \{v_1, \dots , v_{N+1}, v_i^j (r+1\leq i\leq N, j\geq 1)\},$$ 
such that $v_1,\dots , v_N$ are the canonical generators of $M(E_1)^*=H$ (see Remark \ref{R:inverses}(b)).

Now, we will construct the layer corresponding to $G$, and simultaneously the group homomorphism $\varphi :H\rightarrow G$. To this end, we will define a new collection of 
vertices $E_2^0$, jointly with a family of relations stated in similar terms as those used in Lemma \ref{L:represgroup}. Nevertheless, these new relations will involve not 
only the vertices of $E_2^0$, but also those of $E_1^0$. The new set of vertices $E_2^0$ is 
$$E_2^0=\{w_1, \dots , w_{M+1}, w_i^j (t+1\leq i\leq M, j\geq 1)\} .$$
As in Lemma \ref{L:represgroup}, instead of fixing what are the edges emitted by the vertices of $E_2^0$ extensively, 
we will express the relations that these edges define on the graph monoid. Let $E$ be the graph defined by taking $E^0=E_1^0\sqcup E_2^0$ and where all 
the relations enjoyed by the vertices are as follows:
\begin{itemize}
\item The relations given by the set of edges of the graph $E_1$.
\item The relations we list below:
\begin{enumerate}
\item $w_{M+1}=2w_{M+1}+\sum\limits_{i=1}^{t}w_i+\sum\limits_{i=t+1}^{M}m_{i-t}w_i$
\item For every $t+1\leq i\leq M$:
\begin{enumerate}
\item $w_i^1=m_{i-t}w_i+w_i^1+w_i^3$
\item for every $j\geq 1$, $w_i^{2j}=w_i^{2j-1}+w_i^{2j}$
\item for every $j\geq 1$, $w_i^{2j+1}=w_i^{2j}+w_i^{2j+1}+w_i^{2j+3}$
\end{enumerate}
\item For every $1\leq i\leq t$, 
$$w_{i}=(a_{i,i}+2)w_{i}+\sum\limits_{j=1, j\ne i}^{t}(a_{j,i}+1)w_j+\sum\limits_{j=t+1}^{M}(a_{j,i}+m_{j-t})w_j+w_{M+1}+v_i^{-}.$$
Notice that whenever $a_{j,i}<0$ for some $1\leq j\leq t$, we will replace in the above relation $a_{j,i}w_j$ by $\sum\limits_{k=1,k\ne j}^{t}(-a_{j,i})w_k+(-a_{j,i}w_{M+1})$.
\item For every $t+1\leq i\leq M$, 
$$w_{i}=\sum\limits_{j=1}^{t}(a_{j,i}+1)w_j+\sum\limits_{j=t+1, j\ne i}^{M}(a_{j,i}+m_{j-t})w_j+(a_{i,i}+m_{i-t}+1)w_i+w_{M+1}+w_i^1+v_i^{-}.$$
As in the previous relation, whenever $a_{j,i}<0$ for some $1\leq j\leq t$, we will replace in the above relation $a_{j,i}w_j$ by $\sum\limits_{k=1,k\ne j}^{t}(-a_{j,i})w_k+(-a_{j,i}w_{M+1})$.
\end{enumerate}
\end{itemize}

If $N<M$, and $N<i\le M$, then the term $v_i^-$ appearing in (4) (or in (3) if $N<i \le t$) should be interpreted as the neutral element $0$ of the graph monoid. 
So, for these values of $i$, the relations are of the form (4) (or (3)) in Lemma \ref{L:represgroup}.

Now, consider the graph $E$, and let us identify the properties enjoyed by the elements of $M(E)$:
\begin{itemize}
\item Observe that $E_1^0$ is a hereditary and saturated subset of $E^0$, so that $M(E_1)$ is an order-ideal of $M(E)$, by Lemma \ref{lem:Her}. 
Moreover $M(E_1)^*$ is the archimedean component of $e$
in $M(E)$ and, by construction $M(E_1)^* \cong H$. 
\item By similar arguments to those used in Lemma \ref{L:represgroup}, every vertex in $E_2^0$ is a regular element.
\item By similar arguments to those used in Lemma \ref{L:represgroup}, for any $1\leq i,j\leq M+1$ there exist paths 
from $w_i$ to $w_j$, and also from $w_i$ to $w_j^k$ and from $w_j^k$ to $w_i$ for every $k\geq 1$.
\end{itemize}

Hence, $M(E_1)^*$ equals $\AC{M(E)}{e}$. Also, $E_2^0$ is strongly connected in $E$, and thus the vertices in $E_2^0$ generate an archimedean component $\AC{M(E)}{f}$ of $M(E)$ 
with neutral element $f$. Moreover, $M(E)$ is a regular refinement monoid.

By using the relations defined above, and arguing as in Lemma \ref{L:represgroup}, we have that:
\begin{itemize}
\item By relation (1) above,
\begin{equation}\label{eq4}
f=\sum\limits_{i=1}^{t}w_i+w_{M+1}+\sum\limits_{i=t+1}^{M}m_{i-t}w_i.
\end{equation}
\item By relation (2) above, $f=w_i^j$ for every $t+1\leq i\leq M$ and every $j\geq 1$, while 
\begin{equation}\label{eq5}
f=m_{i-t}w_i \text{ for every } t+1\leq i\leq M.
\end{equation}
\item By replacing equation (\ref{eq5}) in equation (\ref{eq4}), we obtain
\begin{equation}\label{eq6}
f=w_1+\cdots +w_t+w_{M+1}
\end{equation}
\end{itemize}
Hence, the monoid $ \langle w_1,\dots w_M, w_{M+1}\rangle$ generated by $w_1,\dots , w_{N+1}$ is indeed a group, which is a subgroup of $\AC{M(E)}{f}$. 
\vspace{.2truecm}

Now, we will state the relation between $e$ and $f$ in $M(E)$. To this end, notice that:
\begin{itemize}
\item Since $v_1^{-}\in M(E_1)^*$, we have that $e\leq v_1^{-}$.
\item By relation (3) above (or (4) if $t=0$), $v_1^{-}\leq w_1$, so that $e\leq w_1$.
\item By equation (\ref{eq6}) (or (\ref{eq5}) if $t=0$), $w_1\leq f$.
\end{itemize}
Hence, $e\leq f$ in $M(E)$. So there are only two archimedean components in $M(E)$, corresponding to the idempotents $e$ and $f$, and $e\le f$ in $M(E)$.
We want to describe the homomorphism 
$$
\begin{array}{cccc}
\phi_e^f: & \AC{M(E)}{e}  &\rightarrow   &  \AC{M(E)}{f} \\
 & x & \mapsto  & x+f  
\end{array}.
$$
Observing that $v_1,\dots ,v_N$ are canonical generators of
$\AC{M(E)}{e}\cong H$, we are going to compute $\phi_e^f (v_i)$.
We will compute $\phi_e^f(v_i)$ for every $1\leq i\leq N$, by using relations (3) and (4) above. To be precise, recall that $r\leq r+s=N\leq M=t+l$. 
So, it can occur either $N\leq t$ or $N>t$. In the first case, to determine $\phi_e^f(v_i)$ we will only need relation (3) above. 
In the second case, we will use relation (3) above to determine $\phi_e^f(v_i)$ for $1\leq i\leq t$, and relation (4) above to determine $\phi_e^f(v_i)$ for $t+1\leq i\leq N$. 
Let us suppose that we are in the second case. Take $i\in \{1, \dots, t\}$, and add $v_i$ on both sides of relation (3), as follows:
$$w_{i}+ v_i=$$
$$(a_{i,i}+2)w_{i}+\sum\limits_{j=1, j\ne i}^{t}(a_{j,i}+1)w_j+\sum\limits_{j=t+1}^{M}(a_{j,i}+m_{j-t})w_j+w_{M+1}+v_i^{-}+v_i=$$
$$w_{i}+\sum\limits_{j=1}^{M}a_{j,i}w_j+\left[\sum\limits_{j=1}^{t}w_j+w_{M+1}+\sum\limits_{j=t+1}^{M}m_{j-t}w_j\right]+(v_i^{-}+v_i).$$
By definition of $v_i^{-}$ and $w_i^{-}$, we have that $v_i^{-}+v_i=e$ and $w_i^{-}+w_i=f$. Also, $e+f=f$, and by equation (\ref{eq4}), 
$f=\sum\limits_{j=1}^{t}w_j+w_{M+1}+\sum\limits_{j=t+1}^{M}m_{j-t}w_j$. Thus, by adding $w_i^{-}$ on both sides of the previous identity, we have
$$f+v_i= f+\sum\limits_{j=1}^{M}a_{j,i}w_j+f+e=\sum\limits_{j=1}^{M}a_{j,i}w_j+f=\sum\limits_{j=1}^{M}a_{j,i}w_j$$
because $f$ is the neutral element of the group $\AC{M(E)}{f}$. On the other hand, if $i\in \{t+1, \dots, N\}$, a similar argument shows that adding $v_i+w_i^{-}$ on both 
sides of relation (4) gives us
$$v_i+f=\sum\limits_{j=1}^{M}a_{j,i}w_j.$$
Hence, using (\ref{eq5}), (\ref{eq6}) and the above relations, we obtain a monoid homomorphism 
$$\gamma \colon M(\varphi) \to M(E)$$ 
extending the canonical isomorphism 
$H= \AC{M(\varphi)}{e}\to \AC{M(E)}{e}$ and sending the canonical generators of $G= \AC{M(\varphi)}{f}$ to $w_1,\dots ,w_M$. 
We are going to define an inverse $\delta \colon M(E) \to M(\varphi )$ of the map $\gamma $. For this, it is enough to define the map on the vertices of $E$ and to show that the defining
relations of $M(E)$ are preserved by this assignment. The images of the vertices in $E_1$ are dictated by the inverse map of the isomorphism from $H$ onto $\AC{M(E)}{e}$.
Let $\mathbf{x}_1, \dots , \mathbf{x}_M$ be the canonical generators of $G$. We define $\delta (w_i)= \mathbf{x}_i$ for $1\le i \le M$, and 
$$\delta (w_{M+1}) = - (\mathbf{x}_1+\cdots + \mathbf{x}_t).$$
Finally define $\delta (w_i^j) = f$, where $f$ is the neutral element of $G$. It is easy to show that all relations (1)-(4) are preserved by $\delta$, so that 
this assignment gives a well-defined monoid homomorphism $\delta$. It is now clear that $\delta$ is the inverse of $\gamma $. 

Therefore we obtain that $\gamma$ is an isomorphism from $M(\varphi)$ onto $M(E)$. Observe that $\gamma $ sends the canonical set of generators of $G=\AC{M(\varphi)}{f}$
onto $w_1,\dots , w_M$, so that the vertices $w_1,\dots , w_M$ are canonical generators of $\AC{M(E)}{f}$ and the canonical map $\phi^f_e$ has associated matrix $A$ with respect to the canonical
generators $v_1,\dots ,v_N$ of $\AC{M(E)}{e}$ and $w_1,\dots ,w_M$ of $\AC{M(E)}{f}$.
\end{proof}

\begin{rema}
 \label{rem:bigger-than-t}
 {\rm In the above proof, we could have assumed that $N\le l$ and use only relations (4) to encode the map $\varphi$ in the graph monoid $M(E)$.
 The proof is then a little bit shorter, since we would not need to distinguish different cases when we deal with the computation of the
 elements $\phi _e^f (v_i)$. This is the approach that we will follow in the proof of the general case (see Proposition \ref{Th:FinGenRegRefAreRepr2}).} 
 \end{rema}

Let us illustrate our results with two concrete applications of Proposition \ref{L:represmaps}: 

(1) For any $n\in \N$, we will compute the graph $E_n$ such that $M(E_n)$ represents the homomorphism $n\cdot :\Z\rightarrow \Z$. We will follow the same notation as in the proof, so that $r=t=1$, $s=l=0$, $N=M=1$, the matrix $A=(n)$, and for each $n\in\N$ the associated graph $E_n$ is:
\[
{
\def\labelstyle{\displaystyle}
\xymatrix{  \bullet^{w_2}\dloopd{}_{(2)} \ar@/^8pt/ [r] ^{}& {\bullet}^{w_1}\ar[d]
\ar@/^8pt/ [l] ^{}\uloopd{} ^{(n+2)}\\
 \bullet_{v_1}\dloopd{}_{(2)} \ar@/^8pt/ [r] ^{}& {\bullet}_{v_2}
\ar@/^8pt/ [l] ^{}\uloopd{} ^{(2)}}}
\]
\vspace{.2truecm}

(2) We will compute the graph $F$ such that $M(F)$ represents the group homomorphism $0\cdot :\Z_2\rightarrow \Z_2$. As in the previous example, we will follow the same notation as in the proof, so that $r=t=0$, $s=l=1$, $N=M=1$, the matrix $A=(2)$, and the associated graph $F$ is:
\[
{
\def\labelstyle{\displaystyle}
\xymatrix{
{\bullet}^{w_2} \ar@/^{-10pt}/ [d]\ar@/^{-5pt}/ [d]\uloopr{}^{(2)}&  &  &  & & & &  & \\
{\bullet}^{w_1} \ar@/^{-10pt}/ [u]\ar[d]\dloopd{}_{(5)}\ar@/^8pt/ [r] & {\bullet}^{w_1^1}\dloopr{} \ar@/^8pt/ [l] ^{(2)}  \ar@/^16pt/ [rr] & {\bullet}^{w_1^2}\dloopr{} \ar[l]  & {\bullet}^{w_1^3}\dloopr{} \ar[l] \ar@/^16pt/ [rr] & {\bullet}^{w_1^4}\dloopr{} \ar[l] & {\bullet}^{w_1^5}\dloopr{} \ar[l] \ar@/^16pt/@{.} [rr] & \ar@{.}[l] & \ar@{.}[l] & \cdots\\
{\bullet}^{v_1}  \ar@/^{-10pt}/ [d]\dloopd{} _{(3)}\ar@/^8pt/ [r] & {\bullet}^{v_1^1}\dloopr{} \ar@/^8pt/ [l] ^{(2)}  \ar@/^16pt/ [rr] & {\bullet}^{v_1^2}\dloopr{} \ar[l] & {\bullet}^{v_1^3}\dloopr{} \ar[l] \ar@/^16pt/ [rr] & {\bullet}^{v_1^4}\dloopr{} \ar[l] & {\bullet}^{v_1^5}\dloopr{} \ar[l] \ar@/^16pt/@{.} [rr] & \ar@{.}[l] & \ar@{.}[l] & \cdots\\
{\bullet}^{v_2} \ar@/^{-10pt}/ [u]\ar@/^{-5pt}/ [u]\dloopr{}_{(2)}&  &  &  & & & &  & 
}}
\]
\vspace{.2truecm}

The next step is to show a result analogous to Proposition \ref{L:represmaps}, which allows us to represent confluent maps of groups. This is precisely what we need to use in the inductive step
of the proof of the general result.
The proof is very similar to the one of Proposition \ref{L:represmaps}, so we will only give a brief sketch of it. 

Let $H_1, \dots , H_n, G$ be finitely generated abelian groups, and let $\varphi_i:H_i\rightarrow G$ be group homomorphisms ($1\leq i\leq n$).
Let $M(\varphi)$ be the regular refinement monoid associated to $\varphi := (\varphi_1,\dots ,\varphi_n)$,
that is, $M(\varphi)$ has exactly $n+1$ prime idempotents $e_1,\dots ,e_n$ and $f$, with $e_i+f= f$, and $G_M[e_i]= H_i$, $G_M[f]= G$, and the map 
$\varphi_{fe_i}\colon G_M[e_i]\to G_M[f]$ given by $\varphi _{fe_i} (x) = x+f$ is exactly the map $\varphi_i$ for $1\le i \le n$.

\begin{prop}\label{L:represmaps2}
Let $H_1, \dots , H_n, G$ be finitely generated abelian groups, and let $\varphi_i:H_i\rightarrow G$ be group homomorphisms ($1\leq i\leq n$). Then, these maps can be represented 
simultaneously as the monoid $M(E)$ associated to a (not necessarily finite) row-finite directed graph $E$.
\end{prop}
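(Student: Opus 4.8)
The plan is to mimic the construction in the proof of Proposition \ref{L:represmaps}, replacing the single source group $H$ by the finitely generated abelian groups $H_1,\dots,H_n$. First I would fix, via the Structure Theorem for Finitely Generated Abelian Groups, presentations $H_k=\Z^{r_k}\oplus\Z_{n^{(k)}_1}\oplus\cdots\oplus\Z_{n^{(k)}_{s_k}}$ and $G=\Z^t\oplus\Z_{m_1}\oplus\cdots\oplus\Z_{m_l}$, and enlarge $G$'s presentation by adjoining enough copies of $\Z_1$ so that, following Remark \ref{rem:bigger-than-t}, we may assume $N_k:=r_k+s_k\le l$ for every $k$; each $\varphi_k$ is then encoded by an integer matrix $A^{(k)}=(a^{(k)}_{i,j})$ with the entries in the torsion block rows normalized to satisfy $0<a^{(k)}_{i,j}\le m_{i-t}$.

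Next I would build the graph $E$ in layers. For each $k=1,\dots,n$, apply Lemma \ref{L:represgroup} to obtain a graph $E_1^{(k)}$ with $M(E_1^{(k)})^*\cong H_k$, having vertex set $\{v^{(k)}_1,\dots,v^{(k)}_{N_k+1},v^{(k),j}_i\}$, with $v^{(k)}_1,\dots,v^{(k)}_{N_k}$ the canonical generators of $H_k$. Then add a single top layer $E_2^0=\{w_1,\dots,w_{M+1},w_i^j\}$ ($M=t+l$) exactly as in Proposition \ref{L:represmaps}, but replacing, in the analogue of relations (4) (and (3)), the single term $v_i^-$ by the sum $\sum_{k=1}^n v^{(k),-}_i$ over those $k$ for which $v^{(k)}_i$ exists (interpreted as $0$ when it does not); relations (1) and (2) for the $w$'s are unchanged. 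Since $E_2^0$ is strongly connected and sits above each hereditary saturated $(E_1^{(k)})^0$, the monoid $M(E)$ is a regular refinement monoid with archimedean components $\AC{M(E)}{e_1},\dots,\AC{M(E)}{e_n},\AC{M(E)}{f}$ where $\AC{M(E)}{e_k}\cong H_k$, $\AC{M(E)}{f}\cong G$, and $e_k\le f$ for every $k$ (via the chain $e_k\le v^{(k),-}_1\le w_1\le f$, as in the proof of Proposition \ref{L:represmaps}).

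It then remains to compute the maps $\phi^f_{e_k}$. Fix $k$; adding $v^{(k)}_i$ to both sides of the relevant relation (4) for $w_i$ and cancelling in the group $\AC{M(E)}{f}$ using $v^{(k),-}_i+v^{(k)}_i=e_k$, $e_k+f=f$, and equations (\ref{eq5}), (\ref{eq6}) — exactly the computation carried out in Proposition \ref{L:represmaps}, where the extra terms $v^{(k'),-}_i$ with $k'\ne k$ are swallowed because $e_{k'}\le f$ — one gets $v^{(k)}_i+f=\sum_{j=1}^M a^{(k)}_{j,i}w_j$, i.e.\ $\phi^f_{e_k}$ has matrix $A^{(k)}$ on the canonical generators. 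Finally, as in Proposition \ref{L:represmaps}, one constructs the inverse isomorphism $\delta\colon M(E)\to M(\varphi)$ by sending the $E_1^{(k)}$-vertices via the inverse of $H_k\cong\AC{M(E)}{e_k}$, setting $\delta(w_i)=\mathbf{x}_i$, $\delta(w_{M+1})=-(\mathbf{x}_1+\cdots+\mathbf{x}_t)$, $\delta(w_i^j)=f$, and checking (routinely) that relations (1)--(4) are preserved; composing shows $\gamma$ and $\delta$ are mutually inverse, so $M(E)\cong M(\varphi)$. The main obstacle — and the only place where genuine care beyond a verbatim transcription of Proposition \ref{L:represmaps} is needed — is verifying that the additional source layers $E_1^{(k')}$, $k'\ne k$, do not contaminate the computation of $\phi^f_{e_k}$: this is handled by the observation that each $e_{k'}$ is below $f$, so every term $v^{(k'),-}_i$ appearing in a relation lies in $\AC{M(E)}{e_{k'}}$ and is absorbed when we pass to the group $\AC{M(E)}{f}$.
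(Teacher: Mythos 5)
Your overall strategy is the paper's: build one bottom layer per $H_k$ via Lemma \ref{L:represgroup} and a single top layer for $G$ carrying relations that encode the matrices of the $\varphi_k$. But the way you wire the top layer to the bottom layers has a genuine flaw. You normalize so that $N_k\le l$ for \emph{each} $k$ and then let the generators of all the $H_k$ share the same indices, so that the relation for $w_i$ contains the sum $\sum_k v^{(k),-}_i$ over every $k$ with $i\le N_k$. This breaks the computation of the structural maps in two ways. First, your absorption argument is incorrect: for $k'\ne k$ the element $v^{(k'),-}_i$ is a (generally non-neutral) element of the group $\AC{M(E)}{e_{k'}}$, and $v^{(k'),-}_i+f=\phi^f_{e_{k'}}\bigl(v^{(k'),-}_i\bigr)$, which equals $f$ only when $v^{(k'),-}_i$ lies in the kernel of $\phi^f_{e_{k'}}$; only the idempotent $e_{k'}$ itself is absorbed by $f$, not arbitrary elements of its component. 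Second, and more fundamentally, a single relation at $w_i$ involving all the $v^{(k),-}_i$ simultaneously can at best pin down the \emph{sum} $\sum_k\phi^f_{e_k}\bigl(v^{(k)}_i\bigr)$, not each image separately, so the resulting $M(E)$ need not be $M(\varphi)$. A minimal counterexample: $n=2$, $H_1=H_2=G=\Z_2$, $\varphi_1=\mathrm{id}$, $\varphi_2=0$; with your normalization $l=M=1$ there is a single vertex $w_1$ and a single relation of type (4), which cannot encode two different maps.

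The fix is exactly the normalization the paper uses: pad $G$ with copies of $\Z_1$ until $\sum_{k=1}^n N_k\le M$ (indeed $\le l$, as in Remark \ref{rem:bigger-than-t}), and assign to each canonical generator of each $H_k$ its \emph{own} top-layer vertex $w_i$, whose relation of type (4) contains the single term $v^{(k),-}$ for that one generator together with the corresponding column of $A^{(k)}$. With disjoint blocks of $w$-indices for the different $H_k$, each relation involves only one bottom component, the computation of Proposition \ref{L:represmaps} goes through verbatim for each $k$ separately, and the rest of your argument (the inverse $\delta$, the identification of archimedean components) is fine.
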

\begin{proof}
We start by fixing notation. Using the Structure Theorem for Finitely Generated Abelian Groups, we can assume that 
$$H_i=\Z ^{t_i}\oplus \Z_{m_1^i}\oplus \cdots \oplus \Z_{m_{l_i}^i}, $$
for $i=1,\dots , n$, and
$$G=\Z ^r\oplus \Z_{p_1}\oplus \cdots \oplus \Z_{p_s}, $$
where $t_i,l_i, r,s\geq 0$, and $m_i^j, p_i\geq 1$ for all $i,j$. We define  $N_i:=t_i+l_i$ ($i=1,2$) and $M:=r+s$; 
moreover, as in Proposition \ref{L:represmaps}, we can assume  $\sum_{i=1}^n N_i\leq M$.

Now, we will define our graph $E$ essentially as in Proposition \ref{L:represmaps}, defining a first layer corresponding to the groups $H_1,\dots, H_n$, and a second layer corresponding to the group
$G$ and to the different maps $\varphi_i$, which are represented by suitable matrices as in Proposition \ref{L:represmaps}.

To deal with the first layer we use Lemma \ref{L:represgroup} to build $n$ mutually disconnected strongly connected graphs $E_i$ such that $M(E_i)^*\cong H_i$ for $i=1,\dots ,n$, with corresponding sets of vertices 
such that the first $N_i$ vertices of graph $E_i$ are a canonical set of generators of $\AC{M(E_i)}{e_i}= H_i$.

The final step consists of constructing the layer corresponding to the group $G$, together with the homomorphisms $\varphi_i:H_i\rightarrow G$. We will do 
that by adding a new collection of vertices $T^0$, jointly with a family of relations, similar to the ones used in Proposition \ref{L:represmaps}. These relations 
will involve not only vertices in $T^0$, but also in $E_i^0$. The idea is to apply again the procedure described in the proof of Proposition \ref{L:represmaps}. 
Observe that, just as in Proposition \ref{L:represmaps} we obtain an isomorphism $M(\varphi) \to M(E)$ sending the canonical generating sets of each $H_i$ and of $G$ to the canonical generating sets of 
vertices spanning the corresponding archimedean components $\AC{M(E)}{e_i}$ and $\AC{M(E)}{f}$ respectively.   
\end{proof}

We can now obtain the  main result of this section.

\begin{theor}\label{Th:FinGenRegRefAreRepr}
If $M$ is a finitely generated conical regular refinement monoid, then there exists a countable row-finite directed graph $E$ such that $M\cong M(E)$.
\end{theor}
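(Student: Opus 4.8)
The strategy is to build the graph $E$ by gluing together the "local" graphs produced in Propositions \ref{L:represmaps} and \ref{L:represmaps2}, one for each prime of $M$, respecting the covering relations of the poset $\mathbb P$ of prime idempotents. By Brookfield's theorem every finitely generated refinement monoid is primely generated, and since $M$ is regular every prime is a regular element; so by the discussion at the end of Subsection \ref{subsec:Primely-gen} the monoid $M$ is recovered from its associated $\mathbb P$-system $\mathcal J_M$, where $\mathbb P$ is a finite poset, each $G_e=G_M[e]$ is a finitely generated abelian group, and the structural maps $\varphi_{fe}\colon G_e\to G_f$ (for $e<f$ a cover) are the relevant group homomorphisms. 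By Dobbertin's description (via Corollary \ref{cor:presentationofM}), $M=M(\mathcal J_M)$ is generated by $\bigsqcup_{e\in\mathbb P}G_e$ subject to $x+y=x+\varphi_{fe}(y)$ for $e<f$, $x\in G_f$, $y\in G_e$; thus it suffices to realize this presentation by a graph.

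First I would fix a linear extension $e_1,\dots,e_k$ of $\mathbb P$ (so $e_i<e_j\Rightarrow i<j$) and proceed by induction on $k$, building $E$ layer by layer. For the bottom layer, the minimal primes have no incoming structural maps, so I use Lemma \ref{L:represgroup} to attach, for each minimal prime $e$, a strongly connected graph whose monoid of nonzero elements is $G_e$, with its first vertices the canonical generators. For the inductive step, given a prime $f$ whose lower covers in $\mathbb P$ are $e_{i_1},\dots,e_{i_n}$ (all already realized in the graph built so far, with canonical generating vertices available), I adjoin a new block of vertices $T^0_f$ exactly as in the proof of Proposition \ref{L:represmaps2}: relations of type (1)--(2) internal to $T^0_f$ to make the new archimedean component the group $G_f$ with the $w$-vertices as canonical generators, and relations of type (3)--(4) that involve the canonical generators (and their formal inverses $v^-$, as in Remark \ref{R:inverses}) of the lower-cover components, tuned by the matrices of $\varphi_{f,e_{i_j}}$. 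Here I must use Remark \ref{rem:bigger-than-t}: enlarge the free rank bookkeeping of $G_f$ so that all incoming maps can be encoded using only relations of type (4), which keeps the construction uniform across covers and avoids a case split. One must check that the new vertex block is strongly connected (so it is a single archimedean component with idempotent $f$), that $e<f$ holds in $M(E)$ precisely when $e<f$ in $\mathbb P$ (via the paths coming from the $v^-$-terms, exactly as in the $e\le f$ verification in Proposition \ref{L:represmaps}), and that $M(E)$ remains a regular refinement monoid.

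To finish, I would define mutually inverse monoid homomorphisms $\gamma\colon M(\mathcal J_M)\to M(E)$ and $\delta\colon M(E)\to M(\mathcal J_M)$: on generators, $\gamma$ sends the canonical generators of each $G_e$ to the corresponding vertices, and $\delta$ sends each vertex $w$ in the $f$-block among the first $M_f$ of them to the matching canonical generator of $G_f$, the ``$w_{M+1}$''-type auxiliary vertex to minus the sum of the relevant generators, and every ``$w^j$''-type vertex to the neutral element of $G_f$. That $\gamma$ is well defined uses that the defining relations $x+y=x+\varphi_{fe}(y)$ of $M(\mathcal J_M)$ follow from the graph relations (this is the computation $f+v_i=\sum_j a_{j,i}w_j$ carried out in Proposition \ref{L:represmaps}, now done for each cover); that $\delta$ is well defined is the routine check that relations (1)--(4) are preserved; and $\gamma\circ\delta=\id$, $\delta\circ\gamma=\id$ hold on generators, hence everywhere. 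Finally $E$ is row-finite by construction and countable since $\mathbb P$ is finite and each block adds only countably many vertices, giving $M\cong M(E)$.

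\textbf{Main obstacle.} The delicate point is the interaction at a prime $f$ with \emph{several} lower covers whose associated groups have mixed free and torsion parts: one must choose the presentation of $G_f$ (its free rank and torsion invariants, possibly padded with trivial cyclic summands as in Proposition \ref{L:represmaps}) and the relation blocks so that \emph{all} the matrices $A^{(j)}$ representing the $\varphi_{f,e_{i_j}}$ are simultaneously encodable, that no spurious comparabilities among the idempotents are introduced, and that the resulting monoid is exactly $M(\mathcal J_M)$ and not a proper quotient or extension; bounding the number of paths and checking strong connectedness of each enlarged block is the technical heart, but it is a direct iteration of the two-prime and confluent cases already established.
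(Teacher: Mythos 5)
Your proposal is correct and follows essentially the same route as the paper: order-induction over the poset of prime idempotents, realizing minimal primes via Lemma \ref{L:represgroup} and each non-minimal prime by attaching a new block as in Proposition \ref{L:represmaps2} keyed to the matrices of the maps $\varphi_{f,e}$ from the lower cover, then verifying the isomorphism by exhibiting mutually inverse homomorphisms on canonical generators. The only point you pass over lightly — that the defining relations $x+y=x+\varphi_{fe}(y)$ must be checked for \emph{all} $e<f$, not just covers — is handled in the paper by functoriality of the $\widehat{\varphi}$'s together with the induction hypothesis, and is routine given your setup.
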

\begin{proof}
By Dobbertin's result \cite{Dobb84}, $M$ is of the form $M (\mathcal J _M )$, for the $I$-system $\mathcal J _M$ of abelian groups $\{ G_M[e]\mid e\in I \}$, 
where $e$ ranges on the poset $I$ of prime idempotents of $M$, as explained in Section \ref{Sect:Basics}. 

Since $M$ is finitely generated, all the groups $G_M[e]$ are finitely generated. We start by fixing a suitable form for these groups, namely we 
set $$G_M[e]= \Z^{r_e} \oplus \Z_{n^e_1}\oplus \cdots \oplus \Z_{n^e_{s_e}}, $$
where $r_e,s_e \geq 0$, and $n^e_i\geq 1$ for all $i$.
Set $N_e = r_e+s_e$. We can assume that for each $f\in I$, we have $\sum _{i=1}^n N_{e_i} \le N_f$, where $\rL(I,f) = \{ e_1,\dots ,e_n\}$ is the lower cover of $f$ in $I$.

We can now proceed to show the result by order-induction. So assume that we have a lower subset $J$ of $I$, and that we have built a countable row-finite graph 
$E_J$ such that $E_J^0 = \bigsqcup _{e\in J} E_e^0$, where each $E_e$ is a strongly connected graph, with 
$$ E_e^0 = \{  v^e_1, \dots , v^e_{N_e+1}, (v^e)_i^j \, \, (r_e+1\leq i\leq N_e, j\geq 1)\} .$$
and an isomorphism
$$\gamma _J\colon M(J)\to M(E_J) ,$$
where $M(J)$ is the order-ideal of $M$ generated by $J$, such that $\gamma_J $ sends the canonical generators of $G_M[e]$ to $v^e_1,\dots , v^e_{N_e}$ for all $e\in J$.  

In case $J\ne I$, let $f$ be a minimal element of $I\setminus J$ and write $J'= J\cup \{f \}$. We will show that the above statement holds for the lower subset $J'$ in place of $J$. This 
clearly establishes the result, because $I$ is a finite poset.

Let $E_f$ be the graph associated to $G_M[f]$, as in Lemma \ref{L:represgroup}.

There are two cases to consider:

(1) \textbf{$f$ is a minimal element of $I$}. Set   $E_{J'}:= E_f\sqcup E_J$. Then 
$$M(J') = M(\{ f \} ) \oplus M(J) \cong M(E_f) \oplus M(E_J) = M(E_{J'})$$
in a canonical way, showing the result.

(2) \textbf{$f$ is not a minimal element of $I$}. Let $\rL(I,f) = \{ e_1,\dots , e_n \}$ be the lower cover of $f$ in $I$. 
Write $\varphi _i = \varphi_{f,e_i}$ for $i=1,\dots , n$, and consider the graph $E$ associated to the maps $\varphi_i$, $i=1,\dots ,n$, as in
Proposition \ref{L:represmaps2}. The first layer of this graph consists exactly of the disjoint union of the graphs $E_{e_i}$, for $i=1,\dots ,n$, so it is a subgraph of our graph $E_J$.
Let $E_{J'}$ be the graph with $E_{J'}^0= E_{J}^0\sqcup E_{f}^0$ and with $E_{J'}^1= E_J^1 \sqcup s_E^{-1}(E_f)$.  
The graph $E_{J'}$ is a countable row-finite graph 
with  $E_{J'}^0 = \bigsqcup _{e\in J'} E_e^0$, where each $E_e^0$ is a strongly connected subset of $E^0$, and the sets of edges $E_e^1$ have the desired form
for all $e\in J'$. We have to prove the existence of the isomorphism $\gamma _{J'}$. 

Since $E_J^0$ is a hereditary and saturated subset of $E_{J'}^0$ we get from Lemma \ref{lem:Her} that the order-ideal of $M(E_{J'})$ generated by
$E_J^0$ is precisely $M(E_J)$. Similarly, the order-ideal of $M(J')$ generated by $J$ is precisely $M(J)$, and $M(J')$ coincides with $M(\mathcal J _{J'})$, which is the 
monoid associated to the partial order of groups $\mathcal J := \mathcal J _M$ restricted to $J'$ (see e.g. \cite[Proposition 1.9]{AP}).
Define a map
$$ \gamma _{J'} \colon M(J') \to M(E_{J'})$$
as follows. The map $\gamma_{J'}$ agrees with the isomorphism $\gamma_J$ when restricted to the order-ideal $M(J)$ of $M(J')$. 
The map $\gamma _{J'}$ restricted to $\AC{M(J')}{f}= \AC{M}{f}$ is just the map obtained by sending the canonical generators of $\AC{M}{f}$ to the canonical generators
$v^f_1, \dots , v^f_{N_f}$ of $\AC{M(E_{J'})}{f}$. In order to prove that $\gamma_{J'}$ gives a well-defined monoid homomorphism, it suffices by \cite[Corollary 1.6]{AP} to show that
if $e< f$ and $x\in \AC{M}{e}$ then $\gamma _{J'}(x) + \gamma _{J'}(f) = \gamma_{J'}(\varphi_{fe}(x))+ \gamma _{J'}(f)$, that is, 
$\gamma _{J'}(x) + f = \gamma_{J'}(\varphi_{fe}(x))$. Since $e<f$ and $I$ is finite, there is some $i$ such that $e\le e_i$. Using the properties of the map defined in 
Proposition \ref{L:represmaps2} and the induction hypothesis, we get
\begin{align*}
 \gamma_{J'}(\varphi _{fe}(x)) & = \gamma_{J'}(\varphi_{f,e_i}(\varphi_{e_i, e}(x)))= \phi_{e_i}^f(\gamma_{J'}(\varphi_{e_i,e}(x)))\\
 & = \phi_{e_i}^f (\gamma _J (\varphi_{e_i,e}(x))) = \phi_{e_i}^f(\phi_e^{e_i} (\gamma _J (x))) \\
 & = \phi^f_e (\gamma_{J'}(x)) = f+ \gamma_{J'}(x).
\end{align*}

This shows that the defining relations of $M(J')$ are preserved and so the map $\gamma_{J'}$ is a well-defined homomorphism from $M(J')$ to $M(E_{J'})$.
To build the inverse $\delta_{J'}$ of $\gamma_{J'}$ we follow the idea in the proof of Proposition \ref{L:represmaps}. The image by $\delta _{J'}$
of the vertices in $E_J$ is determined by the inverse of $\gamma_J$. Let $\mathbf{x}_1, \dots , \mathbf{x}_{N_f}$ be the canonical generators of $\AC{M}{f}$. 
We define $\delta_{J'} (v^f_i)= \mathbf{x}_i$ for $1\le i \le N_f$, and 
$$\delta_{J'} (v^f_{N_f+1}) = - (\mathbf{x}_1+\cdots + \mathbf{x}_{r_f}).$$
Finally define $\delta_{J'} ((v^f)_i^j) = f$. It is easily checked that $\delta_{J'}$ preserves the defining relations of $M(E_{J'})$, and so it gives a well-defined 
homomorphism from $M(E_{J'})$ to $M(J')$, which is clearly the inverse of $\gamma_{J'}$.

This concludes the proof of the result.
\end{proof}


\section{A simple example}\label{Sect:Example}

We are going to represent one of the most simple examples of primitive monoids, using a method similar to the described above. 

This is the monoid
$M=   \Z^+ \cup \{\infty \}$ which appears in \cite{APW08}. Note that this example can be described as $M= \langle p, a \mid a= 2a , a= a+p \rangle $.
Our method is completely different from the method used in \cite{APW08}, and provides a simpler graph than the one given in \cite[Example 6.5]{APW08}.

\begin{lem}
 \label{lem:thefirst}
 The monoid 
 $M= \langle p, a \mid a= 2a , a= a+p \rangle $ can be represented by a graph monoid. 
\end{lem}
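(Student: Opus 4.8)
The plan is to realize $M=\langle p,a\mid a=2a,\ a=a+p\rangle$ as the graph monoid $M(\varphi)$ of a homomorphism between two trivial groups, and then apply Proposition \ref{L:represmaps}. Indeed, $M$ is precisely the primitive monoid with poset of primes $\{e<f\}$, where $e$ corresponds to $p$ (a \emph{free} prime, but here we are building it as a regular picture by hand), $f$ corresponds to $a$, and both archimedean groups are trivial. Wait---$p$ is a free prime here, so this is not directly a \emph{regular} refinement monoid and Proposition \ref{L:represmaps} does not literally apply. So the cleaner route is to write $M$ directly via an explicit small graph, following the machinery of Section \ref{Sect:ReprFinGen} adapted to the free case.

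Concretely, I would take the graph $E$ with two vertices, $v$ (for $p$) and $w$ (for $a$), together with an edge $v\to w$, a loop at $w$ (so that $w$ receives at least the relation $a_w = a_w + a_w$ is \emph{not} what we want---rather we want $a_w$ regular), plus a further structure making $a_w$ an idempotent. The relation $a=2a$ at $a$ is obtained by having $w$ emit exactly two edges, both looping back with the extra vertex pattern of Lemma \ref{L:represgroup} for the trivial group, so that in $M(E)$ we get $a_w = 2a_w$; the relation $a=a+p$ is obtained by routing one of the edges out of $w$ through $v$: taking $E$ with $E^0=\{v,w\}$ and edges so that the monoid relations read $a_v = a_v$ (v a sink, i.e.\ no relation, giving the free prime $p:=a_v$) and $a_w = a_w + a_w + a_v$. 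Then in $M(E)$ one has $a_w = 2a_w$ (so $a_w$ is an idempotent) and $a_w = a_w + a_v$, and $a_v$ is free since $v$ is a sink. I would then check $M(E)\cong M$ by exhibiting the obvious surjection $M\to M(E)$, $p\mapsto a_v$, $a\mapsto a_w$ (well-defined since the defining relations of $M$ hold in $M(E)$), and an inverse: using the description of $\sim$ via $\to$ from Definition \ref{binary} and Lemma \ref{lem:lem4.3AMFP}, every nonzero element of $M(E)$ reduces to a normal form $m\,a_v + \varepsilon\,a_w$ with $m\ge 0$, $\varepsilon\in\{0,1\}$ (confluence: the only rewrite replaces one copy of $a_w$ by $2a_w+a_v$, which stabilizes the count of $a_w$ at "present or absent" after one step and only increases the $a_v$-count), matching the normal forms of $M=\Z^+\cup\{\infty\}$ under $p^n\leftrightarrow n$, $a\leftrightarrow\infty$.

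The key steps, in order: (i) write down the two-vertex graph $E$ with the single nontrivial relation $a_w = 2a_w + a_v$ at $w$ and $v$ a sink; (ii) verify the defining relations $a_w=2a_w$ and $a_w = a_w + a_v$ hold in $M(E)$, giving a homomorphism $M\to M(E)$; (iii) analyze the rewriting system on $\FF$ (free commutative monoid on $\{v,w\}$) to get confluence and the normal forms $m a_v$ (for $m\ge 1$) and $m a_v + a_w$ (for $m\ge 0$), via Lemma \ref{lem:lem4.3AMFP}; (iv) match these with the elements of $M$ to conclude the homomorphism is an isomorphism.

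The main obstacle I expect is step (iii): one must be careful that the reduction $\to$ on $\FF\setminus\{0\}$ really is confluent in the needed sense and that no two distinct normal forms are identified---in particular that $a_w$ and $2a_w$ are \emph{equal} (they are, after one rewrite of the first $a_w$ on the left... actually $2a_w \to a_w + 2a_w + a_v$, and comparing $a_w \to 2a_w + a_v$ shows both reduce to $2a_w+a_v$, hence equal) but that, say, $a_w$ and $a_v$ are \emph{not} equal and that the $a_v$-multiplicities $m$ are genuine invariants on the regular part while becoming absorbed once $a_w$ is present. This bookkeeping is exactly the $\Z^+\cup\{\infty\}$ structure, so once the normal forms are pinned down the isomorphism is immediate; but getting the normal-form lemma cleanly is where the real work lies, and it is the analogue in miniature of the arguments in Lemma \ref{L:represgroup} and Proposition \ref{L:represmaps}.
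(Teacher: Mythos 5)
Your construction in step (i) does not work, and the error is located exactly where you anticipated trouble, in step (iii). For the two-vertex graph with $v$ a sink and the single relation $a_w = 2a_w + a_v$, every application of $\rightarrow_1$ replaces one occurrence of $a_w$ by $2a_w + a_v$, so it increases the multiplicity of $a_w$ by one \emph{and} the multiplicity of $a_v$ by one; the difference (number of $w$'s) $-$ (number of $v$'s) is therefore invariant under $\rightarrow$. Your claimed common reduct is miscomputed: $2a_w \rightarrow_1 a_w + (2a_w + a_v) = 3a_w + a_v$, not $2a_w + a_v$. By Lemma \ref{lem:lem4.3AMFP}, $a_w \sim 2a_w$ would require a common reduct $m_w a_w + m_v a_v$ with $m_w - m_v = 1$ and $m_w - m_v = 2$ simultaneously, which is impossible. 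So in your $M(E)$ the element $a_w$ is \emph{not} idempotent; the archimedean component of $a_w$ is the group $\Z$ (generated by the class of $a_w$, with $a_w + a_v$ playing the role of the neutral element), and the monoid you obtain is not $\Z^+ \cup \{\infty\}$. No repair within finite graphs is possible: as the paper notes at the end of its proof, $M$ is not the graph monoid of any \emph{finite} graph by \cite[Theorem 6.1]{APW08}, so any correct construction must be infinite.

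The paper's proof keeps your vertex $v_0$ for the free prime $p$ (with a loop, rather than a sink, but that is immaterial), and then attaches to the vertex $v_1$ realizing $a$ the infinite tail of vertices $v_1^j$ ($j \ge 1$) from Lemma \ref{L:represgroup} applied to the trivial group $G=\Z_1$, with the relations $v_1^1 = v_1 + v_1^1 + v_1^3$, $v_1^{2j} = v_1^{2j-1} + v_1^{2j}$, $v_1^{2j+1} = v_1^{2j} + v_1^{2j+1} + v_1^{2j+3}$, and $v_1 = 2v_1 + v_1^1 + v_0$. It is this infinite, strongly connected tail that forces the upper archimedean component to be a trivial group, i.e.\ forces $v_1 = 2v_1$, after which the last relation yields $v_1 = v_1 + v_0$ and hence $M(E) \cong M$. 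Your overall strategy (free generator below, regular idempotent above, connected by one relation) is the right shape, but the idempotency of the top element is precisely the part that cannot be encoded by a single finite relation and requires the infinite-graph device of Section \ref{Sect:ReprFinGen}.
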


\begin{proof}
 The lower component is freely generated by $p$, so we introduce a vertex $v_0$ with a single loop $e_0$ around it. 
 
 Now the second component is regular with trivial associated group, so we use the above method setting $G=\Z _1$, and we consider vertices
 
 $$E^0=\{v_0,  v_1,  v_1^j (j\geq 1)\} .$$
Instead of fixing what are the edges of $E$ extensively, we will express the relations that these edges define on the graph monoid $M(E)$, as follows:
\begin{enumerate}
\item $v_{0}=v_0$
\item 
\begin{enumerate}
\item $v_1^1= v_1+ v_1^1+ v_1^3$
\item for every $j\geq 1$, $v_1^{2j}=v_1^{2j-1}+v_1^{2j}$
\item for every $j\geq 1$, $v_1^{2j+1}=v_1^{2j}+v_1^{2j+1}+v_1^{2j+3}$
\end{enumerate}
\item $v_1 = 2v_1+v_1^1+v_0$.
\end{enumerate}
So, the graph $E$ turns out to be\vspace{.1truecm}

\[
{
\def\labelstyle{\displaystyle}
\xymatrix{
{\bullet}^{v_1} \ar[d]\dloopd{}_{(2)} \ar@/^8pt/ [r] & {\bullet}^{v_1^1}\dloopr{} \ar@/^8pt/ [l]   \ar@/^16pt/ [rr] & {\bullet}^{v_1^2}\dloopr{} \ar[l] & {\bullet}^{v_1^3}\dloopr{} \ar[l] \ar@/^16pt/ [rr] & {\bullet}^{v_1^4}\dloopr{} \ar[l] & {\bullet}^{v_1^5}\dloopr{} \ar[l] \ar@/^16pt/@{.} [rr] & \ar@{.}[l] & \ar@{.}[l] & \cdots\\
{\bullet}^{v_0} \dloopr{}&  &  &  & & & &  & 
}}
\]

 Then one can show as in the previous results that $v_1 =  v_1^j= f$ for all $j\ge 1$, so we obtain from (3) that 
 $f= f+v_0$, so the graph monoid $M(E)$ gives the desired monoid $M$. 
 Note that $M$ is not representable by a graph monoid of a {\it finite} graph by \cite[Theorem 6.1]{APW08}.  
 \end{proof}


\section{Representing finitely generated refinement monoids}\label{TopFree}

In this section, we will obtain our main result (Theorem \ref{thm:main}), which gives a characterization of the finitely generated conical refinement monoids which are graph monoids. 
For this, it is fundamental to use the characterization of these monoids obtained in \cite{AP}, which generalizes the results of Dobbertin \cite{Dobb84} for the regular case, and the results of Pierce 
\cite{Pierce} for the antisymmetric case. \vspace{.2truecm}

First, we will recall some facts from \cite{AP}, that will help to understand the statement of the main result and its proof.

Let $\mathcal{J}=\left(I, \leq , (G_i)_{i\in I}, \varphi_{ji} \, (i<j)\right)$ be an $I$-system as 
in Definition \ref{def:I-system}. For every $i\in I$, $G_i$ is an abelian group. For each $i\in I$, we define a 
commutative semigroup $M_i$ and an abelian group $\widehat{G}_i$ as follows: $M_i=G_i=\widehat{G}_i$ if $i$ is regular, while 
$M_i=\N\times G_i$ and $\widehat{G}_i=\Z\times G_i$ if $i$ is free. Then, the monoid $M(\mathcal{J})$ associated to the $I$-system $\mathcal{J}$ is 
the monoid generated by $\{M_i\}_{i\in I}$, with respect to the defining
relations
$$x+y= x+\varphi _{ji}(y), \quad i<j, \, x\in M_j,\,  y\in M_i.$$

Conversely, given a primely generated refinement monoid $M$, we can associate an $I$-system $\mathcal{J}_M$ to $M$ (\cite[Section 2]{AP}). 
To do this, let $\overline{M}$ be its antisymmetrization, let $\mathbb{P}(\overline{M})$ be its set of prime elements, and let $I \subset \mathbb{P}(M)$ 
be a set of representatives of $\mathbb{P}(\overline{M})$ in $\mathbb{P}(M)$ such that $p=2p$ for any $p\in \mathbb{P}_{reg}$. Then, $I$ is a poset 
with the ordering defined by $q<p$ if and only if $\overline{q}<\overline{p}$ in $ \overline{M}$. The poset $I$ will be called {\it the poset of primes} of $M$. 
For each $p\in I$, let $M_p$ be the archimedean component of $p$ in $M$. 
We have:

\begin{enumerate}
\item If $p$ is regular, then $M_p$ is an abelian group, denoted by $G_p$ (\cite[Lemma 2.7]{Brook}).
\item If $p$ is free, then $G'_p= \{ p+\alpha : \alpha \in M \text{ and } p+\alpha \le p \}$
is an abelian group (with respect to the operation $\circ $ given by $(p+\alpha) \circ (p+\beta ) = p+(\alpha +\beta )$), 
isomorphic to the subgroup $G_p:=\{(p+\alpha)-p : p+\alpha\in G'_p\}$ of $G(M_p)$ (\cite[Remark 2.5]{AP}). Moreover,
$$
\begin{array}{cccc}
\varphi : & \N\times G_p   &\rightarrow   & M_p \\
 & (n, (p+\alpha)-p) & \mapsto  &   np+\alpha \, 
\end{array}
$$
is a monoid isomorphism (\cite[Lemma 2.4]{AP}). 
\end{enumerate}

Thus, given $p,q\in I$ such that $q< p$, the map
$$
\begin{array}{crcl}
\varphi_{pq} : & M_q  &\rightarrow   & G_p  \\
 & x & \mapsto  &   (p+x)-p
\end{array}
$$
is a well-defined semigroup homomorphism, and 
$$\mathcal{J}_M:=\left( I, (G_p)_{p\in I}, \varphi_{pq} (q< p) \right)$$
is the $I$-system associated to $M$.  Moreover, $M(\mathcal{J}_M)$ is naturally isomorphic to $M$ (\cite[Theorem 2.7]{AP}).

\bigskip

Assume that $M$ is a finitely generated conical refinement monoid, with poset of primes $I$. Let $p$ be a 
free prime and let  $\rL(I,p) = \{q_1, \dots , q_n \}$ be the lower cover of $p$.
Then the archimedean component $M_p$ of $p$ has the form $M_{p}= \N\times G_p$ for a finitely generated abelian 
group $G_p$. We will assume that $q_1,\dots , q_r$ are free primes and that $q_{r+1},\dots , q_n$ are regular primes.

We will use here the terminology and notation established in Subsection \ref{subsec:Primely-gen}. 

Let $J_p$ be the lower subset of $I$ generated by $q_1,\dots ,q_n$, and let $M_{J_p}$ be the associated semigroup (cf. Corollary \ref{cor:NouSubmonoid}).
By Lemma \ref{lem:GrotIdeals}, the Grothendieck group of the order-ideal $M(\mathcal J_{J_p})$ associated to $J_p$ is precisely
$\widetilde{G}_{J_p}= G(M_{J_p})$. 

\begin{lem}
 \label{lem:surject}
With the above notation, there exists a surjective semigroup homomorphism 
$$\varphi_p \colon M_{{J_p}} \to G_p$$
induced by the maps $\varphi_{p,q}$ for $q<p$.
 \end{lem}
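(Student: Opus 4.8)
The plan is to obtain $\varphi_p$ as the restriction to $M_{J_p}$ of the obvious group homomorphism $\bigoplus_{i\in J_p}\widehat{G}_i\to G_p$, and then to prove surjectivity directly from Definition \ref{def:I-system}(c)(2), with a small manoeuvre to respect the shape of the elements of $H_{J_p}$.

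Two preliminary observations would come first. Since $I$ is finite and $\rL(I,p)=\{q_1,\dots,q_n\}$, any $k\in I$ with $k<p$ satisfies $k\le q_j$ for some $j$ (take a maximal chain from $k$ to $p$ and read off the element of $I$ covered by $p$), so $J_p=\{k\in I:k<p\}$; hence Definition \ref{def:I-system}(c)(2) applied to the free prime $p$ says exactly that $\bigoplus_{k\in J_p}\varphi_{pk}\colon\bigoplus_{k\in J_p}M_k\to G_p$ is surjective. Moreover $\Ma(J_p)=\{q_1,\dots,q_n\}$ is an antichain (if $q_i<q_j$ then $q_j\in[q_i,p]\setminus\{q_i,p\}$, contradicting $q_i\in\rL(I,p)$), so $U_{J_p}=\{0\}$, and therefore $M_{J_p}=H_{J_p}$ is a subsemigroup of $\widehat{H}_{J_p}=\bigoplus_{i\in J_p}\widehat{G}_i$ by Corollary \ref{cor:NouSubmonoid}.

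With this in hand, I would take $\varphi_p\colon M_{J_p}=H_{J_p}\to G_p$ to be the restriction of the group homomorphism $\widehat{H}_{J_p}\to G_p$ sending $(z_i)_{i\in J_p}$ to $\sum_{i\in J_p}\widehat{\varphi}_{pi}(z_i)$, where the $\widehat{\varphi}_{pi}\colon\widehat{G}_i\to G_p$ ($i<p$) are the structure maps of $\mathcal J$. Being the restriction of a group homomorphism, this needs no further well-definedness check, and it is manifestly ``induced by the $\varphi_{p,q}$, $q<p$'' as in the statement; by the functoriality condition (c)(1) each generator $\chi(J_p,i,u)-\chi(J_p,j,\widehat{\varphi}_{ji}(u))$ of $U_{J_p}$ is killed, so the construction is robust even without the antichain remark.

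The only real content is surjectivity, and this is where I expect the main obstacle: an element of $H_{J_p}$ is constrained to have first coordinate $\ge 1$ in every slot $i\in\Ma(J_p)_{{\rm free}}$, so a preimage coming from $\bigoplus_{k\in J_p}M_k$ cannot simply be re-read as an element of $H_{J_p}$. I would get around this as follows. Relabel so that $q_1,\dots,q_r$ are the free elements of $\rL(I,p)$, and set $\delta_j:=\varphi_{p,q_j}\big((1,e_{q_j})\big)\in G_p$. Given $g\in G_p$, use the surjectivity recorded above to pick $F\subseteq J_p$ and $x_k\in M_k$ ($k\in F$) with $\sum_{k\in F}\varphi_{pk}(x_k)=g-\sum_{j=1}^r\delta_j$ (legitimate, $G_p$ being a group). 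Now define $(z_i)_{i\in J_p}$ by $z_i=x_i+(1,e_i)$ if $i\in\Ma(J_p)_{{\rm free}}\cap F$, $z_i=(1,e_i)$ if $i\in\Ma(J_p)_{{\rm free}}\setminus F$, $z_i=x_i$ if $i\in F\setminus\Ma(J_p)_{{\rm free}}$, and $z_i=0$ otherwise. One then checks that $(z_i)\in H_{J_p}$ (the free maximal slots now have first coordinate $\ge 1$, and the remaining slots are either unconstrained, when regular, or may take the value $0$ of $\widehat{G}_i$, when free non-maximal), and that $\varphi_p\big((z_i)\big)=\sum_{j=1}^r\delta_j+\sum_{k\in F}\varphi_{pk}(x_k)=g$. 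Hence $\varphi_p$ is onto. Everything except this last coordinate bookkeeping is formal; the delicate point is precisely reconciling the admissibility condition defining $H_{J_p}$ with the ``shift by the $(1,e_{q_j})$''.
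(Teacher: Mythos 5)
Your argument is essentially the paper's own: both restrict the group homomorphism $\bigoplus_{q<p}\widehat{\varphi}_{p,q}\colon\widehat{H}_{J_p}\to G_p$ to $H_{J_p}$, invoke Definition \ref{def:I-system}(c)(2) for surjectivity, and repair the admissibility constraint on the free maximal slots by inserting a copy of $(1,e_{q_j})$ into each and compensating with a preimage of $-\sum_{j=1}^r\varphi_{p,q_j}\bigl((1,e_{q_j})\bigr)$; the paper phrases this as ``it suffices to hit $e_p$,'' but the bookkeeping is the same. The proof is correct, with one caveat: your claim that $\Ma(J_p)$ being an antichain forces $U_{J_p}=\{0\}$, hence $M_{J_p}=H_{J_p}$, is false. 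In the definition of $U_a$ the generators $\chi(a,i,u)-\chi(a,j,\widehat{\varphi}_{ji}(u))$ range over all pairs $i<j$ with $j\in\Ma(a)$ and $i\in a$ arbitrary --- this reading is forced by Lemmas \ref{lem:equivrels} and \ref{lem:when-a-isidown} (under your reading one would get $\widetilde{G}_{I\downarrow i}=\bigoplus_{j\le i}\widehat{G}_j$ rather than $\widehat{G}_i$) --- so $U_{J_p}\ne 0$ as soon as some $q_j$ is non-minimal. Fortunately this does no damage: as you yourself note, functoriality kills every generator of $U_{J_p}$, so the map factors through $M_{J_p}=H_{J_p}/{\sim}$, and surjectivity established at the level of $H_{J_p}$ descends trivially to the quotient. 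Just delete the antichain remark and rely on the functoriality argument alone.
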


 \begin{proof}
Let us suppose that $q_1,\dots ,q_r$ are free primes and $q_{r+1},\dots , q_n$ 
are regular primes. Write $S:= \oplus_{q<p} M_q$, and identify $S$ with a subsemigroup of $\widehat{H}_{J_p}= \oplus_{q<p} \widehat{G}_q$. By a slight abuse of notation, we will write $q_i$ for 
$\chi (J_p, q_i,(1,e_{q_i}))\in \widehat{H}_{J_p}$. Note that $H_{J_p}$ is in general a proper subsemigroup of $S$. (See Subsection 2.2 for the definitions of these objects.)
We have a group homomorphism $\psi_p :=\oplus_{q<p} \widehat{\varphi}_{p,q} \colon \widehat{H}_{J_p}\to G_p$ such that $\psi_p(S)=G_p$.
In order to show that $(\psi_p)|_{H_{J_p}}$ is surjective, it is enough to prove that $e_p\in \psi_p(H_{J_p})$. Now, for each $i=1,\dots , r$, we have
$$ - \varphi_{p,q_i} (q_i)= \psi_p (s_i)$$
for some $s_i\in S$. Therefore 
$$e_p= re_p = \psi_p \Big( \sum_{i=1}^r (q_i + s_i) \Big),$$
and $\sum_{i=1}^r (q_i+s_i) \in H_{J_p}$, showing the surjectivity. Now $(\psi_p)|_{H_{J_p}}$ clearly factors through $M_{J_p}= H_{J_p}/{\sim}$, giving rise to a surjective semigroup homomorphism
$\varphi_p\colon M_{J_p}\to G_p$, as desired. 
   \end{proof}

Now, by the universal property of the Grothendieck group, we get a unique group homomorphism
$$G(\varphi_p) \colon \widetilde{G}_{J_p}  \to G_p$$
such that $G( \varphi _p) \circ \iota_{J_p} = \varphi _p$, where $\iota_{J_p} \colon M_{J_p} \to G(M_{J_p})$ is the natural map.

\begin{defi}\label{Def:StriclyPositive}
{\rm
We define the set $G(M_{J_p})^{++}$ of {\it strictly positive} elements of $G(M_{J_p})$ as the image in $G(M_{J_p})$ of the natural map
$\iota_{J_p} \colon M_{J_p}\to G(M_{J_p})$. Note that $G(M_{J_p})^{++}$ is just a subsemigroup of $G(M_{J_p})$, which does not contain the neutral element 
of $G(M_{J_p})$ in general. 
}
\end{defi}

\begin{lem}
 \label{lem:positive-in-G(MJ)} Every element in $\widetilde{G}_{J_p}^{++}= G(M_{J_p})^{++}$ can be represented by an element of the form
 $\sum _{i=1}^r \chi_{q_i}(n_i, g_i) + \sum _{i=r+1}^n \chi _{q_i} (g_i)$ for some $n_i \in \N$, $i=1,\dots , r$  and  $g_i\in G_{q_i}$,
 $i=1,\dots, n$. 
 \end{lem}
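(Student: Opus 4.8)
The plan is to reduce an arbitrary element of $\widetilde{G}_{J_p}^{++}=G(M_{J_p})^{++}$ to the claimed normal form by working directly with the description of $M_{J_p}$ as $H_{J_p}/{\sim}$ coming from Corollary \ref{cor:NouSubmonoid}, together with the presentation of $M(\mathcal J_{J_p})$ by the generators $\chi_q(x)$ and the relations $\chi_q(x)+\chi_{q'}(y)=\chi_q(x+\varphi_{qq'}(y))$ from Notation \ref{nota:chisubi}. First I would take an arbitrary element $\xi\in G(M_{J_p})^{++}$; by Definition \ref{Def:StriclyPositive} it is the image under $\iota_{J_p}$ of some $z\in M_{J_p}$, hence of some element $\sum_{q\in J_p}\chi(J_p,q,x_q)\in H_{J_p}$ with $x_q\in M_q$ for all $q$ (where $M_q=G_q$ for regular $q$ and $M_q=\N\times G_q$ for free $q$, with the boundary condition on the non-maximal free coordinates built into the definition of $H_{J_p}$). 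The goal is then to absorb all the coordinates $\chi(J_p,q,x_q)$ with $q$ strictly below some $q_i$ into the coordinates at the $q_i$ themselves, i.e. at the maximal elements of $J_p$, which are exactly $q_1,\dots,q_n$ since $J_p$ is the lower set generated by $\{q_1,\dots,q_n\}$.

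The key computational step is this absorption. If $q<q_i$ in $I$, then in $M(\mathcal J_{J_p})$ the relation $\chi_{q_i}(x)+\chi_q(y)=\chi_{q_i}(x+\varphi_{q_i,q}(y))$ lets me replace a generator at $q$ by a modification of the generator at $q_i$ — provided there is already something sitting at the coordinate $q_i$ to absorb it into. For the free $q_i$ ($1\le i\le r$) there is no problem: I can always write $(n_i,g_i)=(n_i+m_i,g_i+h_i)-(m_i,h_i)$-style manipulations are not even needed, since I may simply add a large multiple of $(1,e_{q_i})$ at the start and absorb everything below $q_i$ into it, the point being that the first coordinate of a free generator only ever increases under $\to_1$. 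For the regular $q_i$ ($r+1\le i\le n$) the coordinate $\chi_{q_i}(g_i)$ already lies in the group $G_{q_i}$, which is closed under the additions $\varphi_{q_i,q}(\cdot)$, so absorption is automatic. I would carry this out by downward induction on the elements of $J_p\setminus\{q_1,\dots,q_n\}$ (picking maximal such elements first), at each stage using that every non-maximal element of $J_p$ lies strictly below some $q_i$, and that the relations of Corollary \ref{cor:presentationofM} are exactly what is needed to push its contribution up to that $q_i$. One has to be a little careful that, after absorbing, the coordinates at the free non-maximal elements can legitimately be set to their basepoint $(0,e_q)$, which is allowed since those coordinates live in $\{(0,0_q)\}\cup(\N\times G_q)$ by the definition of $H_{J_p}$; and that passing to $G(M_{J_p})$ via $\iota_{J_p}$ does not disturb the form, since $\iota_{J_p}$ sends $\chi_q(x)$ to the corresponding element and the normal-form element is already in $H_{J_p}$.

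The main obstacle I anticipate is bookkeeping rather than conceptual: making sure the induction is set up so that when I push the contribution of a non-maximal $q$ up into some $q_i$, I do not then need to re-process coordinates I have already cleared, and handling the case where $q$ is below several of the $q_i$ simultaneously — here any one choice of $q_i$ with $q\le q_i$ works, so I just fix such a choice. A secondary point to verify is that the coefficients $n_i$ produced at the free $q_i$ genuinely lie in $\N$ (i.e. are strictly positive), not merely in $\Z^+$; this is automatic because the original $x_{q_i}$, if present, already had first coordinate in $\N$, and absorbing further terms only increases it, while if $q_i$ did not originally appear I introduce it with first coordinate $1$. Once the element is in the displayed form $\sum_{i=1}^r\chi_{q_i}(n_i,g_i)+\sum_{i=r+1}^n\chi_{q_i}(g_i)$, there is nothing left to prove. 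I expect the write-up to invoke Lemma \ref{lem:equivrels} or Corollary \ref{cor:presentationofM} for the validity of the rewriting, and Lemma \ref{lem:GrotIdeals} only implicitly, to know that $\widetilde{G}_{J_p}=G(M_{J_p})$ so that the two names for the ambient group agree.
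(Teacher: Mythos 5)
Your proposal is correct and follows essentially the same route as the paper, which simply observes that the claim is immediate from the description of $M_{J_p}=H_{J_p}/{\sim}$ and the fact that $\{q_1,\dots,q_n\}=\Ma(J_p)$; your absorption of the non-maximal coordinates into the maximal ones via the relations of Lemma \ref{lem:equivrels} is exactly the content of that observation. The only superfluous worry is whether a free maximal coordinate might be ``absent'': by the definition of $H_{J_p}$ the coordinates at maximal free elements are forced to lie in $\N\times G_{q_i}$, so there is always something to absorb into and no generator needs to be introduced by hand.
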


 \begin{proof}
  This follows immediately from the description of $M_{J_p}$ given in Section \ref{Sect:Basics}, using that $\{ q_1 ,\dots , q_n \}= \Ma (J_p) $.
   \end{proof}

\begin{exem}
{\rm
 Note that the positive cone $G(M_{J_p})^{++}$ does not coincide in general with the positive cone obtained by considering the image of $M(\mathcal J _{J_p})$ 
 in $G(M(\mathcal J_{J_p}))= G(M_{J_p})$. For instance consider the graph monoid $M= \langle a,b \mid b=b+2a \rangle $, and $J$ the poset of primes $\{ a,b \}$ 
 of $M$. Then $\iota (a)$ is in the image of the canonical map $\iota \colon M\to G(M)$, but $\iota ( a ) \notin G(M_{J_b})^{++}$, since $G(M_{J_b})= \Z\times \Z_2$ 
 and $G(M_{J_b})^{++} = \N\times \Z_2$.
 }
\end{exem}

The following definition is handy to express the conditions charactering finitely generated graph monoids.

\begin{defi}
 \label{def:almost-iso} {\rm Let $G_1$ be an abelian group with a distinguished subsemigroup $G_1^{++}$ of strictly positive elements, and let 
 $G_2$ be an abelian group. We say that a group homomorphism $f\colon G_1\to G_2$ is an {\it almost isomorphism} in case $f$ is surjective and the kernel of $f$
 is a cyclic subgroup of $G_1$ generated by an element in $G_1^{++}$.} 
 \end{defi}

We can now state the main result of the paper.
 
\begin{theor}\label{thm:main}
Let $M$ be a finitely generated conical refinement monoid. Then $M$ is a graph monoid if and only if for each free prime $p$ of $M$, the map
$$ G(\varphi_p) \colon \widetilde{G}_{J_p} \to G_p$$ 
is an almost isomorphism.  
\end{theor}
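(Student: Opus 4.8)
The strategy is a prime-by-prime analysis built on the $I$-system description of \cite{AP}: the forward implication localizes around a free prime of $E$ and is essentially a computation of Grothendieck groups (of $K$-theoretic flavour), while the converse extends the order-induction of Theorem~\ref{Th:FinGenRegRefAreRepr} so as to attach free primes as well. \textbf{Necessity.} Assume $M\cong M(E)$ and fix a free prime $p$ with $\rL(I,p)=\{q_1,\dots,q_n\}$; let $J_p$ be the lower subset of $I$ generated by these, so that, $I$ being finite, $(I\dnw p)\setminus J_p=\{p\}$. Under the isomorphisms of Propositions~\ref{prop:characideals} and~\ref{prop:order-ideals-Hersats} the order-ideals $M(\mathcal J_{J_p})\subseteq M(\mathcal J_{I\dnw p})$ of $M$ correspond to hereditary saturated sets $H\subseteq H^+$ of $E^0$, and by Lemmas~\ref{lem:Her},~\ref{lem:GrotIdeals} and~\ref{lem:when-a-isidown} one has $\widetilde G_{J_p}=G(M(E_H))$ and $\widehat G_p=\Z\times G_p=G(M(E_{H^+}))$. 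The stratum $H^+\setminus H$ of the single prime $p$ is strongly connected; if one of its vertices $v$ emitted two edges with range again in $H^+\setminus H$ then, by strong connectedness, $2a_v\les a_v$, making $p$ regular, a contradiction. Hence $H^+\setminus H$ induces a single cycle $v_1\to\cdots\to v_\ell\to v_1$ in $E$ (a sink when $p$ is minimal, in which case the conclusion is immediate), all of whose exit edges have range in $H$. Setting $c$ to be the sum of those ranges, relation~\eqref{(M)} gives $a_{v_1}=a_{v_1}+c$ with $c\in M(E_H)$; passing to Grothendieck groups of the ideal $M(E_{H^+})$ one finds $\widehat G_p\cong\Z\oplus\bigl(\widetilde G_{J_p}/\langle[c]\rangle\bigr)$, and therefore $G_p\cong\widetilde G_{J_p}/\langle[c]\rangle$. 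Since $a_{v_1}+c=a_{v_1}$ forces $[c]\in\ker G(\varphi_p)$, and a surjective endomorphism of a finitely generated abelian group is injective, we get $\ker G(\varphi_p)=\langle[c]\rangle$, a cyclic group. Finally, every path from $v_1$ into $H$ must traverse an exit edge, so the hereditary saturated closure of $\operatorname{supp}(c)$ equals $H$; in view of $\{q_1,\dots,q_n\}=\Ma(J_p)$ this is exactly the condition of Lemma~\ref{lem:positive-in-G(MJ)} that $[c]\in G(M_{J_p})^{++}$. Thus $G(\varphi_p)$ is an almost isomorphism (Definition~\ref{def:almost-iso}).

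\textbf{Sufficiency.} Assume $G(\varphi_p)$ is an almost isomorphism for every free prime $p$. We argue by order-induction on the finite poset $I$ exactly as in Theorem~\ref{Th:FinGenRegRefAreRepr}: for each lower subset $J$ we construct a countable row-finite graph $E_J$ and an isomorphism $\gamma_J\colon M(\mathcal J_J)\xrightarrow{\sim}M(E_J)$ respecting the canonical generators, and we enlarge $J$ by a minimal element $f$ of $I\setminus J$. When $f$ is regular the construction of Proposition~\ref{L:represmaps2} applies with the only, routine, modification that some lower covers $q_i$ of $f$ may be free, so that $\varphi_{f,q_i}$ has domain $M_{q_i}=\N\times G_{q_i}$; one then encodes the induced group homomorphism $\widehat G_{q_i}=G(M_{q_i})\to G_f$ by the same block of edges from the $f$-gadget, now using the free generator of the $q_i$-gadget together with the generators of $G_{q_i}$.

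The new case is $f$ free. Write $\rL(I,f)=\{q_1,\dots,q_n\}$ with $q_1,\dots,q_r$ free and $q_{r+1},\dots,q_n$ regular, let $J_f$ be the lower subset they generate ($J_f\subseteq J$), and let $E_{H_f}\subseteq E_J$ be the restriction graph realizing the order-ideal $M(\mathcal J_{J_f})$, so $G(M(E_{H_f}))=\widetilde G_{J_f}$ (Lemmas~\ref{lem:Her},~\ref{lem:GrotIdeals}). By hypothesis $\ker G(\varphi_f)=\langle g_0\rangle$ with $g_0\in G(M_{J_f})^{++}$; using Lemma~\ref{lem:positive-in-G(MJ)} and Remark~\ref{R:inverses}(c), realize $g_0$ by a positive combination $c$ of vertices of $E_{H_f}$ whose support meets every $q_i$-gadget (from the representative $\sum_{i=1}^r\chi_{q_i}(m_i,h_i)+\sum_{i=r+1}^n\chi_{q_i}(h_i)$ with $m_i\in\N$ the free covers are reached automatically; for a regular cover $q_i$ with $h_i=e_{q_i}$ one inserts the class-zero positive combination of gadget vertices representing $e_{q_i}$). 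Form $E_{J'}$ by adjoining one new vertex $v_f$ carrying a loop and edges to $\operatorname{supp}(c)$, so that $a_{v_f}=a_{v_f}+c$ holds in $M(E_{J'})$. Then $E_J^0$ is hereditary and saturated in $E_{J'}^0$, the hereditary saturated closure of $\{v_f\}$ is $\{v_f\}\cup H_f$ (this is where the support condition is used, ensuring $f>q_i$ for all $i$), and the Grothendieck-group computation above, applied to the ideal generated by $v_f$, shows that the archimedean component of $a_{v_f}$ is $\N\times(\widetilde G_{J_f}/\langle g_0\rangle)$ with the maps from the $q_i$-gadgets equal to $\varphi_{f,q_i}$ — here the almost-isomorphism hypothesis is used precisely so that $G(\varphi_f)$ identifies $\widetilde G_{J_f}/\langle g_0\rangle$ with $G_f$, hence the induced maps with the $\varphi_{f,q_i}$ (recall $G(\varphi_f)\circ\iota_{J_f}=\varphi_f$ and Corollary~\ref{cor:presentationofM}). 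Finally, arguing as in Theorem~\ref{Th:FinGenRegRefAreRepr}, $\gamma_J$ extends to an isomorphism $\gamma_{J'}\colon M(\mathcal J_{J'})\to M(E_{J'})$ sending $\chi_f(1,e_f)$ to $a_{v_f}$ — the defining relations $\chi_f(x)+\chi_{q_i}(y)=\chi_f(x+\varphi_{f,q_i}(y))$ being preserved because $\ker G(\varphi_f)=\langle g_0\rangle$ corresponds to the multiples of $c$, which are absorbed by $a_{v_f}$ — with inverse determined on vertices by $\delta_J$ and $v_f\mapsto\chi_f(1,e_f)$. Taking $J=I$ yields $M\cong M(E_I)$.

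\textbf{The main obstacle} is the necessity direction, specifically the structural fact that any graph realizing $M$ attaches each free prime through a single cycle all of whose exits lie strictly below it; this is exactly what makes $\ker G(\varphi_p)$ cyclic and forces its generator to be strictly positive. Equivalently, it is the $K$-theoretic obstruction alluded to in the introduction: for the ideal $\mathfrak I(H)\trianglelefteq L_K(E_{H^+})$, the six-term exact sequence has $K_1$ of the quotient Leavitt path algebra $L_K(E_{H^+}/H)\cong\Mat_\ell(K[x,x^{-1}])$ mapping the class of $x$ onto $[c]\in K_0(\mathfrak I(H))=\widetilde G_{J_p}$, and the cokernel identification then reads $G_p\cong\widetilde G_{J_p}/\langle[c]\rangle$.
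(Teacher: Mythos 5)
Your overall architecture matches the paper's: necessity by localizing at $I\dnw p$ and computing the Grothendieck group of the extension of $M(E_H)$ by the top free prime, sufficiency by the same order-induction as in Theorem \ref{Th:FinGenRegRefAreRepr}, attaching a free prime through a single new vertex whose exit edges realize the strictly positive generator of $\ker G(\varphi_p)$. One difference is genuinely interesting: in the necessity direction you replace the paper's algebraic $K$-theory argument (the six-term sequence for $0\to\frak I(H)\to L_k(E)\to k[t,t^{-1}]\to 0$ together with the Menal--Moncasi computation of $\partial([t])$) by a direct presentation computation. Since the Grothendieck group functor preserves presentations, $M(E)=\langle M(E_H),\,a_{v_1}\mid a_{v_1}=a_{v_1}+c\rangle$ gives $G(M(E))\cong\Z a_{v_1}\oplus\bigl(\widetilde G_{J_p}/\langle[c]\rangle\bigr)$, and the Hopfian property of finitely generated abelian groups then pins down $\ker G(\varphi_p)=\langle[c]\rangle$. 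Once you are in the single-loop situation this is correct and more elementary than the paper's route; your positivity argument via hereditary saturated closures is also a legitimate substitute for the paper's primality argument.

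There is, however, a genuine gap in how you reach the single-loop situation. The claim that the stratum $H^+\setminus H$ is strongly connected is false, and so is the conclusion you draw from it (that no vertex of $H^+\setminus H$ emits two edges with range in $H^+\setminus H$). Take $H=\{w\}$ with a loop at $w$, a vertex $v$ with a loop and an edge to $w$, and a vertex $u$ with two edges to $v$: then $u\notin\overline H$, so $H^+\setminus H=\{u,v\}$ is not strongly connected, $u$ emits two edges into the stratum, and yet the top prime is still free ($a_u=2a_v$ lies in the component $\N\cdot a_v$). What is actually true --- and what the paper extracts from the trichotomy for graded-simple Leavitt path algebras applied to $\mathcal V(L_k(E/H))\cong\Z^+$ --- is that $E/H$ has a unique cycle $c$, without exits in $E/H$, to which every other vertex of $E/H$ connects; one must then prune, replacing $E$ by the hereditary subset $H\cup c^0$ (whose saturation is $E^0$, so the monoid is unchanged by Lemma \ref{lem:Her}) and collapsing the cycle to a loop, before your presentation computation applies. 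Without this reduction $M(E)$ has extra generators and relations coming from the non-cycle vertices of $E/H$, and your displayed presentation of $G(M(E))$ is not justified. The gap is fixable (either as in the paper, or by a direct analysis of $M(E/H)\cong\Z^+$), but as written the step fails.

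On the sufficiency side your free-prime step is the paper's construction. Be aware, though, that the regular-prime step is not the ``routine modification'' you describe: when a regular prime has free lower covers, the generators of $G_{q_i}$ are not represented by vertices of the $q_i$-gadget (that gadget is a single vertex), so one must express them as images $\varphi_{q_i,q'}(\cdot)$ of elements from strictly lower strata; this is exactly the purpose of Lemma \ref{lem:Auxiliar1} and Definition \ref{Def:StandardForm} in the paper, and it is a nontrivial part of the induction that your sketch omits.
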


In particular, we can apply Theorem \ref{thm:main} to obtain, using the results in \cite{AB}, the following  partial affirmative answer to the realization problem for von Neumann regular rings.

\begin{corol}\label{cor:realization}
Let $M$ be a finitely generated conical refinement monoid such that, for all free primes $p$ of $M$, the map $ G(\varphi_p) \colon \widetilde{G}_{J_p} \to G_p$ is an almost isomorphism. 
Then, there exists a (countable) row-finite graph $E$ such that, for any field $K$, the von Neumann regular $K$-algebra $Q_K(E)$ of the quiver $E$ satisfies  $\mathcal V (Q_K(E))\cong M$.
\end{corol}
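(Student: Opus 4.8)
The plan is to deduce this corollary as an immediate consequence of the main characterization together with the realization result of \cite{AB} recalled in Theorem \ref{thm:AB-result}. The hypothesis placed on $M$ is exactly the condition appearing on the right-hand side of Theorem \ref{thm:main}: namely, that for every free prime $p$ of $M$ the map $G(\varphi_p)\colon \widetilde{G}_{J_p}\to G_p$ is an almost isomorphism. Thus the first step is simply to invoke the ``if'' direction of Theorem \ref{thm:main} to conclude that $M$ is a graph monoid, i.e.\ that there exists a row-finite graph $E$ with $M\cong M(E)$.

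The second step is to verify that the graph $E$ so obtained may be taken to be \emph{countable}. This is not an extra hypothesis but a feature of the construction: since $M$ is finitely generated, its poset of primes $I$ is finite and each archimedean component is a finitely generated abelian group, so the inductive building procedure underlying Theorem \ref{thm:main} produces a countable row-finite graph, exactly as in the regular case treated in Theorem \ref{Th:FinGenRegRefAreRepr} (where the output graph is explicitly countable and row-finite). I would record this countability explicitly, as it is what makes the graph admissible for the input of Theorem \ref{thm:AB-result}, whose convention is that all graphs are row-finite.

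The final step is to apply Theorem \ref{thm:AB-result} to this countable row-finite graph $E$ and to an arbitrary field $K$. That result furnishes a (not necessarily unital) von Neumann regular $K$-algebra $Q_K(E)$, a universal localization of the Leavitt path algebra $L_K(E)$, satisfying $\mathcal V(Q_K(E))\cong M(E)$. Composing this isomorphism with the isomorphism $M(E)\cong M$ from the first step yields $\mathcal V(Q_K(E))\cong M$, as claimed.

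In honesty, there is no genuine obstacle internal to this corollary: all the substantive work lies in Theorem \ref{thm:main} and in the cited Theorem \ref{thm:AB-result}, and the corollary is a direct concatenation of the two. The only point requiring a word of care is the countability of $E$, needed so that the row-finiteness hypothesis of \cite{AB} is met; this is guaranteed by the finite generation of $M$ through the explicit form of the graphs constructed in the proofs of Theorems \ref{Th:FinGenRegRefAreRepr} and \ref{thm:main}.
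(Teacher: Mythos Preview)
Your proposal is correct and matches the paper's approach: the corollary is stated without a separate proof, as an immediate consequence of Theorem~\ref{thm:main} together with Theorem~\ref{thm:AB-result}. Your added remark about countability is fine but not strictly needed here, since in this paper all graphs are countable by definition.
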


Theorem \ref{thm:main} enables us to build examples `a la carte' of monoids which are or aren't graph monoids (see also Corollary \ref{cor:APW}). 
The easiest example of a non-graph monoid is still the following:

\begin{exem} {\rm (\cite[Theorem 4.2]{APW08})}
\label{Exam:NoGraphMonoid}
{\rm
Consider the monoid
$$M:=\langle p, a, b : p=p+a,\,  p=p+b\rangle.$$
Then $M$ is a finitely generated conical refinement monoid which is not a graph monoid. 
Indeed, the generators $p, a, b$ are free primes of $M$, and $M$ is an antisymmetric monoid, so that $G_i$ is the trivial group for $i\in \{p,a,b\}$.
The map $G(\varphi_p)\colon \widetilde{G}_{J_p}\to G_p$ becomes the map $\Z^2\to 0$, whose kernel is obviously non-cyclic. 
Therefore $M$ is not a graph monoid by Theorem \ref{thm:main}. 
}
\end{exem}

We will split the proof of Theorem \ref{thm:main} in two parts. First, we prove that the condition is necessary.

\begin{prop}
 \label{thm:free-charac}
 Let $M$ be a finitely generated conical refinement monoid, and let $p$ be a free prime of $M$. If $M$ is a graph monoid then the map
 $$ G(\varphi_p) \colon \widetilde{G}_{J_p} \to G_p$$ is an almost isomorphism.  
\end{prop}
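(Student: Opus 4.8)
The plan is to assume $M \cong M(E)$ for a row-finite graph $E$ and to translate the structure around a free prime $p$ into K-theoretic information about the Leavitt path algebra $L_K(E)$. First I would recall that, by Proposition \ref{prop:order-ideals-Hersats} and Proposition \ref{prop:characideals}, the order-ideal $M(\mathcal J_{J_p})$ corresponds to a hereditary saturated subset $H$ of $E^0$, so that by Lemma \ref{lem:Her} we have $M(\mathcal J_{J_p}) \cong M(E_H)$, and by Lemma \ref{lem:quotientsallright} the quotient $M(E)/I(H) \cong M(E/H)$ realizes the quotient monoid $M/M(\mathcal J_{J_p})$. The point is that $p$ becomes (the generator of) a free prime that is \emph{minimal} in the quotient graph $E/H$, and the archimedean component of $p$ in the quotient is $M_p \cong \N \times G_p$; the lower cover of $p$ has collapsed. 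I would localize attention to the saturated hereditary subset $H' = \overline{H \cup \{w_0\}}$ where $w_0$ is a vertex representing $p$, so that $M(E_{H'})$ is the order-ideal generated by $J_p \cup \{p\}$, whose Grothendieck group (by Lemma \ref{lem:GrotIdeals} and Lemma \ref{lem:when-a-isidown}) sits in an exact-type relation with $\widetilde G_{J_p}$ and $\widehat G_p = \Z \times G_p$.

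The heart of the argument is then a $K_0$ computation for the Leavitt path algebra. I would use that $K_0(L_K(E_{H'}))$ is computed by the usual presentation: it is the cokernel of the map $\Z^{(E^1)} \to \Z^{(E^0)}$ (or, for the relevant finitely generated piece, the cokernel of $I - A^t$ where $A$ is the adjacency-type matrix of the vertices emitting edges), and that this cokernel, together with the monoid $\mathcal V(L_K(E_{H'})) \cong M(E_{H'})$, fits into the standard six-term sequence relating the ideal $\frak I(H)$, the algebra, and the quotient $L_K(E/H)$. Concretely, $G(\varphi_p)\colon \widetilde G_{J_p} \to G_p$ is identified with (a restriction/quotient of) the connecting map, and its kernel is controlled by $K_1$ of the quotient corner, which for a graph with a single extra strongly connected "top" component is either $0$ or $\Z$; when it is $\Z$, a generator of $\ker G(\varphi_p)$ is the image of the "loop class" at $p$, which is a strictly positive element of $\widetilde G_{J_p}^{++}$. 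This is exactly the content of "almost isomorphism" in Definition \ref{def:almost-iso}: surjectivity of $G(\varphi_p)$ is Lemma \ref{lem:surject}, and the kernel is cyclic, generated by a positive element.

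More operationally, and staying closer to the monoid side, I would argue as follows. Surjectivity of $G(\varphi_p)$ already holds by Lemma \ref{lem:surject} and the universal property of the Grothendieck group. For the kernel: pick a vertex $w_0 \in E^0$ with $a_{w_0}$ generating the archimedean component of $p$, so $a_{w_0}$ is free and $2a_{w_0} \not\le a_{w_0}$. Examine the defining relation at $w_0$, namely $a_{w_0} = \sum_{s(e)=w_0} a_{r(e)}$; separate the edges returning (eventually) into the component of $p$ from those landing in lower components $H$. Because $p$ is free, exactly one unit of "mass" stays at the level of $p$, so the relation reads, in $G(M(E_{H'}))$, as $0 = (\text{one loop contribution}) + (\text{element of } \widetilde G_{J_p})$; this produces a single relation in $\widehat G_p$ of the form $x = x + u$ with $u \in \widetilde G_{J_p}^{++}$ the image of the lower-cover part, and it is the \emph{only} relation linking the $p$-level to the $J_p$-level. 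Unwinding, $\ker(G(\varphi_p))$ is generated by this single positive element $u$, hence cyclic. The main obstacle I anticipate is making the bookkeeping rigorous: one must show there is \emph{no other} source of relations in $\ker G(\varphi_p)$ beyond the single relation coming from the vertex(es) of the $p$-component, i.e. that the only way $E$ can "glue" the free prime $p$ onto $J_p$ is through one cyclic amount of identification — equivalently, that $\widehat G_p / (\text{image of } \widetilde G_{J_p}) \hookrightarrow$ (the quotient monoid data) forces the kernel to be monogenic. This is where the structure theory of $I$-systems from \cite{AP}, the fact that in a graph the quotient $M(E)/I(H)\cong M(E/H)$ is again a graph monoid (Lemma \ref{lem:quotientsallright}), and the K-theoretic identification of the connecting map (which is the announced "obstruction of K-theoretical nature") all have to be combined; I expect the cleanest route is the Leavitt path algebra $K_0$/$K_1$ exact sequence rather than a bare-hands monoid computation.
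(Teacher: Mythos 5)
Your plan follows essentially the same route as the paper's proof: reduce to the case where $p$ is the largest prime, pass to the Leavitt path algebra, and extract the kernel of $G(\varphi_p)$ from the $K_1$--$K_0$ exact sequence of the extension $0 \to \mathfrak{I}(H) \to L_k(E) \to L_k(E/H) \to 0$, with the connecting map producing the generator of the kernel; surjectivity comes from Lemma \ref{lem:surject} exactly as you say. Two steps, however, are asserted rather than argued. First, you need to know that $E/H$ can be taken to be a single cycle without exits, so that $L_k(E/H)$ is Morita equivalent to $k[t,t^{-1}]$ and the connecting map receives exactly one copy of $\Z$ (the $k^{\times}$ summand of $K_1(k[t,t^{-1}])$ lifts to $K_1(L_k(E))$ and contributes nothing). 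This is obtained in the paper from the trichotomy for graded-simple Leavitt path algebras combined with $\mathcal V(L_k(E/H))\cong \Z^+$ and the standing hypotheses that $E$ is row-finite without sinks, followed by a graph reduction collapsing the cycle to a loop at a vertex $v$; your phrase ``a single extra strongly connected top component'' presupposes this identification rather than deriving it.

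Second, and more substantively, the claim that the kernel generator $\partial([t]) = \sum_{e\in s^{-1}(v)\setminus\{c\}}[r(e)]$ is \emph{strictly positive} is not automatic and is the real content of the last third of the paper's proof. Strict positivity in the sense of Definition \ref{def:almost-iso} means the element lies in the image of the \emph{maximal} archimedean component $M_{J_p}$ inside $\widetilde G_{J_p}=G(M_{J_p})$, not merely that it is a nonnegative combination of vertex classes: a priori all the edges $e\in s^{-1}(v)\setminus\{c\}$ could land in a proper lower subset of $J_p$, placing the sum in a smaller component of $M(\mathcal J_{J_p})$. The paper excludes this by noting that the hereditary saturated closure of the tree $T(v)$ must be all of $E^0$, so $M(E)$ is generated as a monoid by $\{a_w : w\in T(v)\}$ (Lemma \ref{lem:OrderIdealGenerator}), and then invoking primality of each $q_i$ in the lower cover of $p$ to get $q_i \le a_{r(e)}$ for some $e\in s^{-1}(v)\setminus\{c\}$, which forces the sum into $M_{J_p}$. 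Your proposal needs this argument (or an equivalent one) to close; without it the ``almost isomorphism'' conclusion only gives a cyclic kernel, not a cyclic kernel with positive generator.
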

\begin{proof}
Assume that $M$ is a graph monoid. Let $E$ be a (row-finite) countable graph without sinks such that $M\cong M(E)$.
(The condition that $E$ does not have sinks can be assumed because we can add a loop at every sink, without changing the corresponding graph monoid.) 
Let $p$ be a free prime of $M$. Let $\mathcal J = \mathcal J _M$  be the $I$-system associated to $M$, so that $M= M(\mathcal J)$ \cite[Theorem 2.7]{AP}. 
The condition in the statement only depends on the restricted $(I\downarrow p)$-system $\mathcal J_{I\downarrow p}$. Moreover, by 
Proposition \ref{prop:characideals}, Proposition \ref{prop:order-ideals-Hersats} and Lemma \ref{lem:Her}, 
we get
$$M( \mathcal J_{I\downarrow p})\cong I(H) \cong M(E_H) ,  $$ 
where $H$ is the hereditary and saturated subset of $E^0$ corresponding to the order-ideal $M( \mathcal J_{I\downarrow p})$  of $M= M(\mathcal J )$.
Therefore $M( \mathcal J_{I\downarrow p})$ is a graph monoid and, restricting attention to $I\downarrow p$, we may (and will) assume that $p$ is the largest element of $I$. 

We have 
$$ \mathcal V (L_k(E)) \cong M(E) \cong M $$
for any field $k$ \cite[Theorem 3.5]{AMFP}. 
Fix a field $k$ for the rest of the argument. By Proposition \ref{prop:characideals}, 
the order-ideals of $M$ correspond to the lower subsets of the poset $I$.
Moreover, the order-ideals of $M$ correspond to the graded-ideals of $L_k(E)$ and to the hereditary saturated subsets of $E^0$ \cite[Theorem 5.3]{AMFP}. 

Since $p$ is the largest element of $I$, and $p+x= p + \varphi_{p,q} (x)$ for all $x\in M_q$, we get that $K_0(L_k(E))= G(M)= G(M_p)= \Z \times G_p$, which is denoted by 
$\widehat{G}_p$ (see Lemmas \ref{lem:when-a-isidown} and  \ref{lem:GrotIdeals}).
Now, let $\frak I$ be the graded ideal of $L_k(E)$ corresponding to the lower subset $J_p$ of $I$ generated by the primes $q_1,\dots , q_n$ in the lower cover of $p$. Let $H$ 
be the hereditary and saturated subset
of $E^0$ corresponding to $\frak I$, so that $\frak I = \frak I (H)$. Since $J_p$ is a maximal lower subset of $I$, it follows that $\frak I= \frak I (H)$ is a maximal graded-ideal of $L_k(E)$, and 
at the same time it gives rise to the maximal order-ideal $S:= M(\mathcal J _{J_p})$
of $M$ associated to the restricted $J_p$-system $\mathcal J _{J_p}$. Observe that $M/S \cong \Z^+$. 
Moreover, by Lemma \ref{lem:quotientsallright}, we have
$$\mathcal V(L_k (E)/\frak I) \cong \mathcal V(L_k(E))/\mathcal V(\frak I) \cong M/S \cong \Z^+ .$$
Since  $L_k (E)/ \frak I = L_k (E)/ \frak I (H)\cong L_k (E/H)$ is a graded-simple Leavitt path algebra, the tricotomy holds for $L_k(E/H)$
(see \cite[Proposition 3.1.14]{AAS}). Since $\mathcal V(L_k(E/H))\cong \Z^+$,
$L_k(E/H)$ must be Morita-equivalent to either $k$ or $k[t,t^{-1}]$, the $k$-algebra of Laurent polynomials. As the graph $E$ is row-finite and does not have 
sinks, the only possibility is that $L_k(E/H)$ is Morita-equivalent to $k[t,t^{-1}]$. Thus, the graph $E/H$ must have a unique cycle $c$, without exits, to which all the other vertices 
of $E/H$ connect. 
Let $E'$ be the graph obtained from $E$ by removing all vertices in $E^0\setminus (H\cup c^0)$ and all the edges emitted by them. Then
$(E')^0$ is hereditary in $E^0$, and its saturation in $E$ is precisely $E^0$. Therefore $ M(E')=M(E) $ by Lemma \ref{lem:Her}. Hence, replacing $E$ with $E'$, we can assume that 
$E/H$ consists exactly of a unique cycle $c$. Further, let $v$ be a vertex in the cycle $c$. It is easily shown, by a direct computation which only involves
the definition of the graph monoid, that replacing the cycle $c$ with a single loop based at $v$ 
does not change the monoid $M(E)$. (In this construction, we just re-define the starting vertex of the edges emitted by vertices of the cycle to $H$ to be $v$.)
Thus,  we can assume that $E/H$ is a single loop $c$, based at $v$. In particular, we have
$$L_k(E)/ \frak I (H) \cong L_k(E/H) \cong k[t,t^{-1}].$$
Observe that $K_0(\frak I) = G (\mathcal V(\frak I))= G (M_{J_p}) = \widetilde{G}_{J_p}$ by Lemma \ref{lem:GrotIdeals}. 
Hence, the map $K_0(\frak I) \to K_0(L_k(E))$ can be identified with the map $\iota \circ G(\varphi_p)$, where $\iota \colon G_p \to \widehat{G}_p= \Z \times G_p$ is the canonical inclusion. 
On the other hand, we have
$$K_1(L_k(E)/\frak I)= K_1(k [t,t^{-1}])= K_1(k) \oplus K_0 (k) \cong k^{\times}\oplus \Z$$ 
(see e.g. \cite[Theorem III.3.8]{Weibel}). Clearly, we have that the factor $k^{\times}$, which is generated by the units of the field $k$, 
is contained in the image of the map $K_1(L_k(E))\to K_1(L_k(E)/\frak I)$, while the factor $\Z$ is generated by multiplication by the unit $t$ of $k[t,t^{-1}]$. 

Hence, the exact sequence in $K_0$ and $K_1$ corresponding to the short exact sequence
\begin{equation*}
 \begin{CD}
0 @>>> \frak I  @>>>  L_k(E) @>>> k[t,t^{-1}] @>>>  0   
 \end{CD}
\end{equation*}
gives the exact sequence
\begin{equation}
 \begin{CD}
\Z  @>{\partial }>>  \widetilde{G}_{J_p} @>{\iota \circ G( \varphi_p )}>> \widehat{G_p} @>>> \Z @>>>  0 .  
 \end{CD}
\end{equation}
The map $\partial \colon \Z \to \widetilde{G}_{J_p}$ is induced by the connecting homomorphism in algebraic K-theory.
Since we can lift the unit $t$ to the von Neumann regular element $c$ in $v L_k(E) v$, we obtain (cf. \cite[Proposition 1.3]{MM})  that 
$$ \partial ([t]) = [v-cc^*] - [v-c^*c] \in K_0 (\frak I ) .$$
Now, $c^*c= v$ and $cc^*+ \sum _{ e\in s^{-1}(v)\setminus \{c \} } ee^*= v$, so that
$$\partial ([t]) =   \sum _{ e\in s^{-1}(v)\setminus \{c \} } [ee^*] =  \sum _{ e\in s^{-1}(v)\setminus \{c \} } [r(e)] \in K_0 (\frak I).$$
Hence, the kernel of the canonical map $G(\varphi_p )\colon \widetilde{G}_{J_p}   \to  G_p$ is generated by the element
in $\widetilde{G}_{J_p}= G(M_{J_p})$ corresponding to the image of the element  $\sum _{ e\in s^{-1}(v)\setminus \{c \} } [r(e)]$ under the isomorphism
$K_0(\frak I) \to G(M_{J_p})$. 

It remains to show that  $\sum _{ e\in s^{-1}(v)\setminus \{c \} } [r(e)]$ is a strictly positive element of $\widetilde{G}_{J_p}$.
Let $\gamma \colon M \to M(E)$ be the isomorphism between $M$ and $M(E)$. Then each $\gamma (q_i)$ is a prime element of the graph monoid $M(E_H)$
(see Lemma \ref{lem:Her}). Consider the tree $T(v)$ of $v$. Then, $T(v)$ is a hereditary subset of $E^0$ containing $v$, and by hypothesis 
the order-ideal $\frak I (T(v))$ generated by $T(v)$ must be $M(E)$. 
By Lemma \ref{lem:OrderIdealGenerator}, $\frak I (T(v))$ (and so $M(E)$)  is generated {\it as a monoid}  by all the elements of the form $a_w$ with $w\in T(v)$. 
Fix an index $i\in \{ 1,\dots , n \}$. By the preceding argument,  we can write  
$\gamma (q_i) = \sum _{j=1}^m a_{w_j}$, where $w_j\in T(v)$ for all $j$.
Since $a_{w_j}\leq \gamma (q_i)$ for all $j$, we see that all $w_j\in H$.
Since $\gamma (q_i) $ is prime, we get that $\gamma (q_i) \le a_{w_j}$ for some $j$. So, 
for some vertex $w\in T(v)\cap H$, $\gamma (q_i) \equiv a_w$ (where $\equiv$ denotes the antisymmetric relation generated by $\leq$). Thus, 
we can choose an edge $e\in s^{-1}(v)\setminus \{ c \}$ such that $r(e)$ connects to $w$, and so $\gamma (q_i) \le a_{r(e)}$. This shows that
 $q_i \le \gamma ^{-1} (a_{r(e)})\in M(\mathcal J _{J_p})$. Therefore, there exists some lower subset $J'$ of ${J_p}$ such that $\gamma ^{-1} (a_{r(e)})\in M_{J'}$, and since 
 $q_i \le \gamma ^{-1} (a_{r(e)})$, it follows that $q_i \in J'$. Since this holds for every $i=1, \dots , n$, we conclude that $\gamma ^{-1} (\sum _{ e\in s^{-1}(v)\setminus \{c \} } a_{r(e)})$
 belongs to the maximal component $M_{J_p}$ of $M(\mathcal J_{J_p})$. Hence, $\sum _{ e\in s^{-1}(v)\setminus \{c \} } [r(e)]$ is the image of 
 $\gamma ^{-1} (\sum _{ e\in s^{-1}(v)\setminus \{c \} } a_{r(e)})$ under the canonical map $M_{J_p} \to G(M_{J_p})$, and so it is strictly positive in $G(M_{J_p})$.  
 
 This shows that $G(\varphi _p)$ is an almost isomorphism.
\end{proof}

To show the converse, we will use a method which is similar to the method employed in the proof of Theorem \ref{Th:FinGenRegRefAreRepr}. 

\begin{point}\label{Point:Canonical}
{\rm
In order to establish the correct setting for the induction argument, we need to introduce some terminology and notation. 

Since $M$ is finitely generated, all the groups $G_M[p]$, for $p$ a regular prime, are finitely generated. In the case where $p$ is regular, we will assume that
$p$ is the neutral element of the group $G_p = M_p=G_M[p]$.

If $p$ is a free prime, then the archimedean component $G_M[p]$ is of the form 
$$G_M[p]= \N \times G_p, $$
where $G_p$ is a finitely generated group. Here the free prime $p$ is identified with the element $(1, e_p)$ of $\N\times G_p$. 

We start by fixing a suitable form for these groups, namely we 
set, for $p=e$ a regular prime,
$$G_e= \Z^{r_e} \oplus \Z_{n^e_1}\oplus \cdots \oplus \Z_{n^e_{s_e}}, $$
where $r_e,s_e \geq 0$, and $n^e_i\geq 1$ for all $i$.  For $p=e$ a regular prime, set $N_e = r_e+s_e$. 
We denote by $\mathbf{x}_1, \dots , \mathbf{x}_{N_e}$ the canonical set of group generators of the group 
$G_e= \Z^{r_e} \oplus \Z_{n^e_1}\oplus \cdots \oplus \Z_{n^e_{s_e}}$. Further, we set $\mathbf{x}_{N_e+1}= -(\mathbf{x}_1+\dots + \mathbf{x}_{r_e})$, and we observe that
$\mathbf{x}_1, \dots , \mathbf{x}_{Ne}, \mathbf{x}_{N_e+1}$ is a family of semigroup generators for $G_e$, which we will call the {\it canonical family of semigroup generators for}
$G_e$.

For $p$ a free prime, we will denote by $ \{ g^p_1,\dots , g^p_{N_p} \}$ a family of {\it semigroup generators} of the group $G_p$, that is,
every element of $G_p$ is a finite sum of some of the elements in the family $ \{ g^p_1,\dots , g^p_{N_p} \}$. Now, we have the following result:
}
\end{point}

\begin{lem}\label{lem:Auxiliar1}
If $p$ is a free prime, then we can assume that each $g^p_i$ is of the form $\varphi _{p, q} (g)$ for $q<p$, where $g$ is either one of the canonical semigroup generators of $G_q$ if $q$ is a regular prime, or $g$ is $q$ if $q$ is a free prime.
\end{lem}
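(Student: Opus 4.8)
The plan is to establish a sharper statement from which the lemma follows at once: for every free prime $p$, the \emph{finite} set
$$S_p:=\{\, \varphi_{p,q}(\mathbf{x}^{q}_{i}) : q<p \text{ regular},\ 1\le i\le N_q+1 \,\}\ \cup\ \{\, \varphi_{p,q}(q) : q<p \text{ free} \,\}$$
generates $G_p$ as a semigroup; here $\mathbf{x}^{q}_{1},\dots,\mathbf{x}^{q}_{N_q+1}$ is the canonical family of semigroup generators of $G_q$ from \ref{Point:Canonical}, and $q$ denotes the element $(1,e_q)\in M_q$. Granting this, one takes $\{g^p_1,\dots,g^p_{N_p}\}$ to be an enumeration of $S_p$, which is legitimate since $S_p$ is finite ($I$ is finite and each regular $G_q$ has finitely many canonical generators) and each of its elements has the required form.

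I would prove this by induction along the finite poset $I$, on the cardinality of $(I\dnw p)\setminus\{p\}$. The engine is Definition \ref{def:I-system}(c)(2): the map $\bigoplus_{k<p}\varphi_{pk}\colon\bigoplus_{k<p}M_k\to G_p$ is surjective, so $G_p$ is generated as a semigroup by $\bigcup_{k<p}\varphi_{pk}(M_k)$, and it suffices to show each $\varphi_{pk}(M_k)$ lies in the subsemigroup $\langle S_p\rangle$ generated by $S_p$. If $k$ is regular this is immediate, since $M_k=G_k=\langle\mathbf{x}^{k}_{1},\dots,\mathbf{x}^{k}_{N_k+1}\rangle$ forces $\varphi_{pk}(M_k)=\langle\varphi_{pk}(\mathbf{x}^{k}_{1}),\dots,\varphi_{pk}(\mathbf{x}^{k}_{N_k+1})\rangle\subseteq\langle S_p\rangle$. (The base case, $p$ minimal, is subsumed here: then $\bigoplus_{k<p}M_k=\{0\}$, so $G_p$ is trivial and $S_p=\varnothing$.)

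The substantive case is $k$ free. Then $M_k=\N\times G_k$ lies inside $\widehat{G}_k=\Z\times G_k$, and for $(n,g)\in M_k$ one has in $\widehat{G}_k$ the identity $(n,g)=n\cdot(1,e_k)+(0,g)$. Applying the group homomorphism $\widehat{\varphi}_{pk}$, which extends $\varphi_{pk}$ and sends $(1,e_k)=k$ to $\varphi_{pk}(k)$, gives
$$\varphi_{pk}(n,g)=n\,\varphi_{pk}(k)+\widehat{\varphi}_{pk}(0,g),$$
where $n\ge1$ and $\varphi_{pk}(k)\in S_p$. To handle the last term, which is $\widehat{\varphi}_{pk}$ evaluated on $g\in G_k$ under the identification $g\leftrightarrow(0,g)$, I would invoke the induction hypothesis for the free prime $k$: write $g=s_1+\cdots+s_m$ with $s_j\in S_k$, so $s_j=\varphi_{k,q'}(y_j)$ for some $q'<k$ with $y_j$ a canonical semigroup generator of $G_{q'}$ (if $q'$ regular) or $y_j=q'$ (if $q'$ free). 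Since $\widehat{\varphi}_{pk}$ is additive, $\widehat{\varphi}_{pk}(0,g)=\sum_j\widehat{\varphi}_{pk}(s_j)$; and the functoriality axiom Definition \ref{def:I-system}(c)(1) — restricting $\widehat{\varphi}_{pq'}=\widehat{\varphi}_{pk}\circ\widehat{\varphi}_{kq'}$ to $M_{q'}$, where it reads $\varphi_{pq'}=\widehat{\varphi}_{pk}\circ\varphi_{kq'}$ — shows $\widehat{\varphi}_{pk}(s_j)=\varphi_{pq'}(y_j)\in S_p$, because $q'<k<p$. Hence $\varphi_{pk}(n,g)\in\langle S_p\rangle$, closing the induction.

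I expect the argument to be largely bookkeeping; the one place that needs care is keeping straight the embeddings $G_i\hookrightarrow\widehat{G}_i$ (via $g\leftrightarrow(0,g)$ when $i$ is free) together with the compatibility $\widehat{\varphi}_{pk}\circ\varphi_{kq'}=\varphi_{pq'}$, which is precisely what allows the recursion to descend through free primes and, since $I$ is finite, to terminate.
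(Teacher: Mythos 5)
Your proposal is correct and follows essentially the same route as the paper: order-induction over the finite poset, the surjectivity axiom of Definition \ref{def:I-system}(c)(2) to reduce to generators coming from the $M_k$ with $k<p$, the decomposition $(n,g)=n(1,e_k)+(0,g)$ for free $k$ combined with the inductive hypothesis for $G_k$, and the functoriality $\widehat{\varphi}_{p,q'}=\widehat{\varphi}_{p,k}\circ\widehat{\varphi}_{k,q'}$ to rewrite $\widehat{\varphi}_{p,k}(\varphi_{k,q'}(h'))$ as $\varphi_{p,q'}(h')$. Your version is slightly more explicit about the bookkeeping (the set $S_p$, the embeddings $G_i\hookrightarrow\widehat{G}_i$), but the argument is the same.
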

\begin{proof}
We will show the result  by (order-)induction. If $p$ is a minimal prime which is free, then $G_p$ is a trivial group, and the statement holds 
vacuously, taking the empty family of generators for $G_p$. Assume now that $p$ is a free prime, and that the statement holds for all free primes below $p$.
By \cite[Definition 1.1(c2)]{AP} the map 
$$\bigoplus_{q<p} \varphi_{p,q} \colon \bigoplus _{q < p} M_q\to G_p$$
is surjective. Thus, a family of semigroup generators for $G_p$ is obtained by taking $\varphi _{p,\tilde{q}} (h)$, where $\tilde{q} <p$ ranges on the set of regular primes below $p$, and $h$ ranges on the family 
of canonical semigroup generators
of the group $G_{\tilde{q}}$, together with the family $\{ \varphi _{p,q}(q) \} \cup \{ \widehat{\varphi}_{p,q} ( h) \}$, where $q<p$ ranges on the set of free primes below $p$, and $h$ ranges on the family 
of canonical semigroup generators of $G_q$. 
By induction hypothesis, applied to the free prime $q$, each $h$ can be taken of the form $\varphi _{q,q'} (h')$, where either $q' <q$ is a regular prime and $h'$ is a canonical semigroup generator in $G_{q'}$, 
or it is the form $\varphi _{q,q'} (q')$, where $q' <q$ is 
a free prime. In the former case, we get
$$\widehat{\varphi}_{p,q} (h) = \widehat{\varphi}_{p,q} (\varphi _{q,q'} (h'))= \widehat{\varphi}_{p,q} (\widehat{\varphi} _{q, q' } (h'))= \widehat{\varphi}_{p, q' } (h')= \varphi _{p, q'}(h') ,$$ 
and in the latter case we get
$$\widehat{\varphi}_{p,q} (h)= \widehat{\varphi} _{p,q} (\widehat{\varphi}_{q,q'}(q')) = \widehat{\varphi}_{p,q'} (q') =  \varphi _{p,q'} (q'),$$
which shows the result. 
\end{proof}

\begin{defi}\label{Def:StandardForm}
{\rm 
Let $p\in I_{{\rm free}}$. We say that a family of elements  $\{ g^p_1,\dots , g^p_{N_p} \}$ of $G_p$  is {\it the canonical set of semigroup generators}  if this family consists of all elements 
of the form $\varphi _{p, q} (g)$ for $q<p$, where $g$ is either one of the canonical semigroup generators of $G_q$ if $q$ is a regular prime, or $g$ is $q$ if $q$ is a free prime.
}
\end{defi}

We are now ready to prove the converse of Proposition \ref{thm:free-charac}.

\begin{prop}\label{Th:FinGenRegRefAreRepr2}
Let $M$ be a finitely generated conical refinement monoid such that the natural map $G(\varphi_p )\colon \widetilde{G}_{J_p} \to G_p$ is an almost isomorphism
for every free prime. Then there exists a countable row-finite directed graph $E$ such that $M\cong M(E)$.
\end{prop}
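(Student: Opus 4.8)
The plan is to prove the result by order-induction on the poset $I$ of primes of $M$, following closely the strategy used in the proof of Theorem \ref{Th:FinGenRegRefAreRepr}, but now handling free primes in addition to regular ones. So I would assume that $J$ is a lower subset of $I$ and that a countable row-finite graph $E_J$ has already been built, together with an isomorphism $\gamma_J\colon M(\mathcal J_J)\to M(E_J)$ respecting the canonical (semigroup) generators of each $G_e$, $e\in J$, in the sense made precise in \ref{Point:Canonical} and Definition \ref{Def:StandardForm}. For a minimal element $f$ of $I\setminus J$, I would set $J'=J\cup\{f\}$ and extend the construction. The regular case of $f$ is already covered by the devices of Section \ref{Sect:ReprFinGen} (Lemma \ref{L:represgroup} and Propositions \ref{L:represmaps}, \ref{L:represmaps2}, in the form indicated in Remark \ref{rem:bigger-than-t}), so the genuinely new work is when $f$ is a \emph{free} prime. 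This is where the almost-isomorphism hypothesis enters.

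The heart of the argument is thus the following local construction: given the free prime $f$ with lower cover $\rL(I,f)=\{q_1,\dots,q_n\}$, the semigroup $M_{J_f}$ (already realized, as an order-ideal, inside $M(E_J)$ via $\gamma_J$), and the surjective map $\varphi_f\colon M_{J_f}\to G_f$ of Lemma \ref{lem:surject} whose Grothendieck extension $G(\varphi_f)\colon \widetilde G_{J_f}\to G_f$ is assumed to be an almost isomorphism, I want to attach finitely many new vertices forming a strongly connected graph $E_f$ (together with finitely many new edges out of the $E_{q_i}$) so that the resulting graph monoid realizes $M(\mathcal J_{J'})$. The recipe is modelled on Proposition \ref{thm:free-charac}: the new archimedean component should be \emph{free} (of the form $\Z^+$ over $G_f$), realized by a single loop based at a new vertex $v$, with $s^{-1}(v)\setminus\{c\}$ consisting of one edge for each generator $q_i$ of the kernel — here I use that $\ker G(\varphi_f)$ is cyclic, generated by a \emph{strictly positive} element, which by Lemma \ref{lem:positive-in-G(MJ)} can be written as $\sum_{i=1}^r\chi_{q_i}(n_i,g_i)+\sum_{i=r+1}^n\chi_{q_i}(g_i)$; realizing that positive element as $\sum_{e\in s^{-1}(v)\setminus\{c\}}a_{r(e)}$ then reproduces, at the level of graph monoids, exactly the relation $p+x=p$ forced by passing to the quotient $\widetilde G_{J_f}\twoheadrightarrow G_f$. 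Concretely I would route $n_i$ edges from $v$ into (a vertex equivalent to) $q_i$ in $E_{q_i}$ for each $i$, plus, to account for the group elements $g_i\in G_{q_i}$, additional edges into vertices realizing $g_i$ in $E_{q_i}$ — using Remark \ref{R:inverses} to handle negative coordinates via the auxiliary vertices $v^-$. The loop $c$ at $v$ makes $v$ a regular-looking vertex that is actually free because of the extra edges; the surjectivity of $\varphi_f$ (equivalently of $G(\varphi_f)$) guarantees that $G_f$ itself is generated by the images of the $q_i$'s, so the component of $v$ has Grothendieck group exactly $G_f$.

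Having built $E_{J'}$, I would then exhibit the isomorphism $\gamma_{J'}\colon M(\mathcal J_{J'})\to M(E_{J'})$ exactly as in the regular case: it agrees with $\gamma_J$ on the order-ideal $M(\mathcal J_J)$ (which is the ideal generated by the hereditary saturated set $E_J^0$, by Lemma \ref{lem:Her}), and on the new component $M_f=\N\times G_f$ it sends $f$ to $a_v$ and a chosen family of canonical semigroup generators of $G_f$ to the vertices $a_{v^f_1},\dots,a_{v^f_{N_f}}$. To check this is well-defined I would invoke Corollary \ref{cor:presentationofM}: it suffices to verify $\gamma_{J'}(x)+a_v=\gamma_{J'}(\varphi_{fq}(x))+a_v$ for $q<f$ and $x\in M_q$, and — using Lemma \ref{lem:Auxiliar1}/Definition \ref{Def:StandardForm} to reduce to the case where $x$ runs over the canonical generators, which are by construction of the form $\varphi_{f,q_i}(g)$ — this reduces to the single computation that, in $M(E_{J'})$, adding $a_v$ absorbs exactly the kernel of $G(\varphi_f)$, which is how the edges out of $v$ were chosen. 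The inverse $\delta_{J'}$ is defined vertex-by-vertex as in Proposition \ref{L:represmaps}: on $E_J$ it is $\gamma_J^{-1}$; on the new vertices $v=v^f_{N_f+1}\mapsto f$-component data, $(v^f)_i^j\mapsto f$, and the generating vertices map to the chosen semigroup generators of $G_f$; one checks routinely that $\delta_{J'}$ respects relations (1)--(4) of the local construction, and that $\delta_{J'}\gamma_{J'}=\id$, $\gamma_{J'}\delta_{J'}=\id$. Since $I$ is finite, iterating reaches $J=I$ and yields the desired graph $E=E_I$ with $M\cong M(E)$, and $E$ is countable and row-finite by construction.

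\textbf{Main obstacle.} The delicate point is the local free step: one must verify that the new component really is free with associated group \emph{exactly} $G_f$ (not a proper quotient or extension) and that the maps $\varphi_{f,q_i}$ are reproduced faithfully. This is precisely where almost-isomorphy is used in full strength — surjectivity of $G(\varphi_f)$ gives that the $q_i$-images generate $G_f$, while cyclicity of the kernel by a strictly positive element is what allows the relation $a_v = a_v + (\text{that element})$ to be imposed by adding \emph{finitely many ordinary edges out of the loop vertex} rather than requiring the infinite gadgetry of Lemma \ref{L:represgroup} at this stage. Making the bookkeeping of edges (especially the translation of the $\Z_{k}$-summand coordinates $g_i$ and of negative $\Z$-coordinates via the $v^-$ vertices) precise, and confirming that no \emph{spurious} identifications arise in $M(E_{J'})$ beyond those predicted by $M(\mathcal J_{J'})$ — equivalently, that $\delta_{J'}$ is well-defined — is the step I expect to require the most care.
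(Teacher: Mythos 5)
Your plan follows the paper's proof essentially verbatim: order-induction on the finite poset of primes, the adapted gadgets of Section \ref{Sect:ReprFinGen} (with the extra $A_i$, $v(i)$ terms) when the new prime is regular, and, when the new prime $p$ is free, a single new vertex carrying a loop together with edges realizing $\gamma_J(x)$ for the strictly positive cyclic generator $x$ of $\ker G(\varphi_p)$, with the isomorphism established via the presentation of Corollary \ref{cor:presentationofM} and an explicit inverse $\delta_{J'}$ whose only new relation to check is $p=p+x$. One minor slip that does not affect the argument: in the free-prime step there are no new vertices $v^f_1,\dots,v^f_{N_f}$ --- the group $G_f$ lives entirely in the images of the lower components via the induced cokernel map, exactly as your own reduction to generators of the form $\varphi_{f,q_i}(g)$ already indicates.
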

\begin{proof}
Notice that, thanks to Lemma \ref{lem:Auxiliar1}, we can always choose a canonical set of semigroup generators of $G_p$ for any free prime $p\in M$. 
We will also require that $r + \sum _{i=1}^n N_{q_i} \les s_p$ for each $p\in I_{{\rm reg}}$, where $\rL (I, p) = \{ q_1,\dots , q_n \}$ is the lower cover of $p$, 
and $r$ is the number of free primes in $\rL (I,P)$. 

We will show the result by order-induction. 

Assume that we have a lower subset $J$ of $I$, and that we have built a countable row-finite graph 
$E_J$ such that $E_J^0 = \bigsqcup _{q\in J} E_q^0$, where each $E_q$ is a strongly connected graph, with 
$$ E_q^0 = \{  v^q_1, \dots , v^q_{N_q+1}, (v^q)_i^j \, \, (r_q+1\leq i\leq N_q, j\geq 1)\} $$
for a regular prime $q\in J$, and $E_q^0= \{v^q \}$ if $q\in J$ is a free prime.  
We also assume that there is an isomorphism
$$\gamma _J\colon M(J)\to M(E_J) ,$$
where $M(J)$ is the order-ideal of $M$ generated by $J$, such that $\gamma_J $ sends the canonical semigroup generators $\mathbf{x}^q _1,\dots , \mathbf{x}^q_{N_q+1}$ of $G_q$ 
to $v^q_1,\dots , v^q_{N_q+1}$ for all regular $q\in J$, and sends the
element $q$ of $G_M[q]$ to $v^q$ for all free  $q\in J$.

For $q'<q$ in $J$ and $x\in M_{q'}$, we have $\gamma_J (\varphi _{q,q'}(x )) = \phi^{q}_{q'}(\gamma_J (x))$, where $\phi_{q'}^q \colon M(E_J)[q'] \to G(E_J)_q$ is the structural map associated to the 
$\mathcal J _J$-system coming from the finitely generated conical refinement monoid $M(E_J)$. 

In case $J\ne I$, let $p$ be a minimal element of $I\setminus J$ and write $J'= J\cup \{p \}$. We will show that the above statement holds for the lower subset $J'$ in place of $J$. This 
clearly establishes the result, because $I$ is a finite poset.\vspace{.2truecm}

There are two cases to consider:

\noindent (1) \textbf{$p$ is a minimal element of $I$}: In this case, we will associate a graph $E_p$ to $G_M[p]$:  when $p$ is a 
regular prime, we define $E_p$ using Lemma \ref{L:represgroup}, while in case $p$ is a free prime, we take $E_p$ to be the one-loop graph based at the vertex $v^p$. Now,  let 
$E_{J'}:= E_p\sqcup E_J$. Then 
$$M(J') = M(\{ p \} ) \oplus M(J) \cong M(E_p) \oplus M(E_J) = M(E_{J'})$$
in a canonical way, showing the result.\vspace{.2truecm}

\noindent (2) \textbf{$p$ is not a minimal element of $I$}: In this case, let $\rL(I,p) = \{ q_1,\dots , q_n \}$ be the lower cover of $p$ in $I$. Here, we will assume that
$q_1, \dots , q_r$ are free primes and $q_{r+1},\dots , q_n$ are regular primes.  We will define $E_{J'}^0= E_J^0\sqcup E_p^0$, where $E_p^0$ will be specified later. The edges 
in the graph $E_{J'}$ will be the edges coming from $E_J$ and a new family of edges that we will describe. Now, we have two different cases to consider.

(i) \underline{$p$ is a regular prime}: 
In this case, the set of vertices $E_p^0$ is defined as in Lemma \ref{L:represgroup}, so that
$$ E_p^0 = \{  v^p_1, \dots , v^p_{N_p+1}, (v^p)_i^j \, \, (r_p+1\leq i\leq N_p, j\geq 1)\} .$$
The relations $\mathcal{R}$ that define the {\it new} edges of the graph $E_{J'}$, all departing from the vertices in $E_p^0$, are as follows:
\begin{enumerate}
\item $v^p_{N_p+1}=2v^p_{N_p+1}+\sum\limits_{i=1}^{r}v^p_i+\sum\limits_{i=r+1}^{N_p}n^p_{i-r}v^p_i$
\item For every $r+1\leq i\leq N$:
\begin{enumerate}
\item $(v^p)_i^1=n^p_{i-r}(v^p)_i+(v^p)_i^1+(v^p)_i^3$,
\item for every $j\geq 1$, $(v^p)_i^{2j}=(v^p)_i^{2j-1}+(v^p)_i^{2j}$,
\item for every $j\geq 1$, $(v^p)_i^{2j+1}=(v^p)_i^{2j}+(v^p)_i^{2j+1}+(v^p)_i^{2j+3}$,
\end{enumerate}
\item For every $1\leq i\leq r$, $$v^p_{i}=2v^p_{i}+\sum\limits_{j=1, j\ne i}^{r}v^p_j+\sum\limits_{j=r+1}^{N_p}n^p_{j-r}v^p_j+v^p_{N_p+1} \, \, ,$$
\item For every $r+1\leq i\leq N_p$, $$v^p_{i}=\sum\limits_{j=1}^{r}v^p_j+\sum\limits_{j=r+,1j\ne i}^{N_p}n^p_{j-r}v^p_j+(n^p_{i-r}+1)v^p_i+v^p_{N_p+1}+(v^p)_i^1+ A_i+ v(i) \, . $$
\end{enumerate}
Here, for each $r+1\leq i\leq N_p$:
\begin{itemize}
\item $A_i$ is a nonnegative integral linear combination of the vertices $v^p_1,\dots, v^p_{N_p+1}$, which depends on an element $g(i)\in \bigsqcup_{i=1}^n \widehat{G}_{q_i}$, as described below. 
\item $v(i)$ is a certain vertex in the graph $E_J$, which will be described below. 
\end{itemize}
There is a map $i\mapsto g (i)$ from $[r_p+1, r_p + r + \sum_{i=1}^n N_{q_i}]\cap \Z$ to $\bigsqcup_{i=1}^n \widehat{G}_{q_i}$
such that  
\begin{itemize}
\item sends the set $[r_p+1, r_p + r]\cap \Z$ bijectively to the set of free primes $q_1,\dots , q_r $ in $\rL (I,p)$. 
\item establishes a correspondence between $[r_p+r+ 1, r_p + r + \sum_{i=1}^r N_{q_i}]\cap \Z$ 
and $\bigcup _{i=1}^r \{ g^{q_i}_j: j=1,\dots , N_{q_i} \}$, and
\item establishes a correspondence between $[r_p + r+ \sum_{i=1}^r N_{q_i} +1,\,  r_p + r+ \sum _{i=1}^n N_{q_i}]\cap \Z$ and 
$\bigcup _{t=r+1}^{n} \{ {\bf x}^{q_t}_j: j=1,\dots , N_{q_t} \}$. 
\end{itemize}
Of course, we take $A_i$ as the trivial linear combination (with all coefficients being $0$)
in case $i$ is larger than $r_p + r + \sum _{i=1}^n N_{q_i} $.  

Now, we specify the value of the term $A_i$ and the corresponding vertex $v(i)$, which depend on the form of the specific generator $g(i)$. 
Suppose first that $i$ belongs to the interval $[r_p + r+ \sum_{i=1}^r N_{q_i} +1,\,  r_p + r+ \sum _{i=1}^n N_{q_i}]$. In this case $g(i)$ is a canonical {\it group generator}
of a group $G_q$ corresponding to a regular prime $q$ in the lower cover of $p$. We write
$$- \varphi _{p,q}(g(i)) = \sum _{j=1}^{N_p+1} a_{ji} \mathbf{x}^p_j $$
for some non-negative integers $a_{ji}$. 
Then, we define
$$A_i = \sum _{j=1}^{N_p+1} a_{ji} v^p_j, \, \text{ and } \,   v(i) = \gamma _J (g(i)) .$$
We next consider the case where $i$ belongs to the interval $[r_p+1, r_p+r]$, so that $g(i)=q$ for a free prime $q$ in the lower cover of $p$. 
Then we write
$$- \varphi _{p,q}(q) = \sum _{j=1}^{N_p+1} a_{ji} \mathbf{x}^p_j $$
for some non-negative integers $a_{ji}$, and we define 
$$A_i = \sum _{j=1}^{N_p+1} a_{ji} v^p_j , \, \text{ and } \,    v(i) = v^q= \gamma _J (q) .$$
Finally, we need to consider the case where $g{(i)}$ is a canonical semigroup generator of the group $G_q$ for a free prime $q$ in the lower cover of $p$. 
This means by definition that either $g{(i)}$ is of the form $\varphi_{q,q'}(h)$, where
$q'<q$ is a regular prime and $h$ is a canonical semigroup generator of $G_{q'}$, or that  $q'<q$ is a free prime and $g{(i)}= \varphi _{q,q'}(q')$.
In the former case, we set
$$- \varphi _{p, q'} (h)=  \sum _{j=1}^{N_p+1} a_{ji} \mathbf{x}^p_j $$
for some non-negative integers $a_{ji}$, and we define
$$A_i = \sum _{j=1}^{N_p+1} a_{ji} v^p_j , \, \text{ and } \,\,    v(i) =  \gamma _J (h) .$$
In the latter case, we compute the non-negative integers $a_{ji}$ using $-\varphi _{p,q'} (q')$, and we define
$$A_i = \sum _{j=1}^{N_p+1} a_{ji} v^p_j ,  \, \text{ and }  \,\,   v(i) =  v^{q'} = \gamma _J (q') .$$

Note that, in this situation, the same arguments as in Proposition \ref{L:represmaps} give that the subgraph $E_p$  is strongly connected, and that there is a group homomorphism 
$\gamma _p \colon G_p\to M(E_{J'})[f]$ from the group $G_p$ to the archimedean
component $M(E_{J'})[f]$ of the graph monoid $ M(E_{J'}) $ corresponding to the vertices in $E_p^0$, where $f$ denotes the neutral element of that component,  which sends the 
canonical semigroup generators $ \mathbf{x}^p_1, \dots , \mathbf{x}^p_{N_p+1}$ of $G_p$ to the canonical set $ v^p_1,\dots , v^p_{N_p+1} $ of elements of the group
$ M(E_{J'})[f] $. 

Define a map
$$ \gamma _{J'} \colon M(J') \to M(E_{J'})$$
as follows. The map $\gamma_{J'}$ agrees with the isomorphism $\gamma_J$ when restricted to the order-ideal $M(J)$ of $M(J')$. 
The map $\gamma _{J'}$ restricted to $\AC{M(J')}{p}= \AC{M}{p}=G_p$ is just the map $\gamma _p$ above, which  sends the canonical semigroup generators of $G_p$ 
to the elements
$v^p_1, \dots , v^p_{N_p+1}$ of $\AC{M(E_{J'})}{f}$.  
In order to prove that $\gamma_{J'}$ gives a well-defined monoid homomorphism, it suffices by \cite[Corollary 1.6]{AP} to show that
if $q< p$ and $x\in \AC{M}{q}$ then $\gamma _{J'}(x) + \gamma _{J'}(p) = \gamma_{J'}(\varphi_{p,q}(x))+ \gamma _{J'}(p)$, that is, 
$\gamma _{J'}(x) + f = \gamma_{J'}(\varphi_{p,q}(x))$. 
By the same argument used in the proof of Theorem \ref{Th:FinGenRegRefAreRepr}, it is enough to consider the case where $q$ belongs to the lower cover of $p$. 
We will consider only the case where $q$ is a free prime. (The case where $q$ is a regular prime is easier and is left to the reader.)
Assume that $q$ is a free prime in the lower cover of $p$. Consider first the case where $x=q$. There is some $i$ such that $g (i) =q$ and thus $v(i) = v^q$.
Observe that, after using the relations $\mathcal{R}$, relation $\mathcal{R}$(4) can be expressed as:
$$v^p_i = v^p_i + f+ A_i+ v(i) = v^p_i+f - \gamma_{J'}(\varphi _{p,q} (q)) + v^q . $$
So, we obtain
$\gamma_{J'}(\varphi_{p,q} (q)) = f+ v^q$, that is, $f+\gamma_{J'}(x) = \gamma_{J'}(\varphi_{p,q} (x))$, as desired. 
Now suppose that $x\in M_q$. Write $x= mq+ \sum _t \varphi _{q,q'_t} (h_t) $, where $m\in \N$, $q'_t<q$, and $h_t$ is either a canonical semigroup generator of $G_{q'_t}$ (in case 
$q'_t$ is a regular prime) or $q'_t$ (in case  $q'_t$ is a free prime). Assume that we have proven that 
\begin{equation}
\label{eq:palante-patras-1}
\gamma _{J'}(\widehat{\varphi}_{p,q} \varphi_{q, q'_t} (h_t)) = f+ \gamma _{J'} (h_t)
\end{equation}
for each $t$. Then, using (\ref{eq:palante-patras-1}), the induction hypothesis and the fact that $\gamma _J$ and $\gamma _p$ are semigroup homomorphisms, we get
\begin{align*}
 \gamma _{J'} (\varphi _{p,q} (x)) & = \gamma_{J'} ( \varphi_{p,q} (mq+ \sum _t \varphi _{q,q'_t} (h_t) )) \\
 & = m\gamma_{J'} ( \varphi_{p,q} (q)) + \sum _t \gamma _{J'} (\widehat{\varphi}_{p,q} \varphi _{q,q'_t} (h_t) )\\
 & = f+ m\gamma_{J}(q) + f+ \sum _t  \gamma _{J} (h_t) \\
 & = f+ \gamma_J (mq+ \sum _t h_t ) =  f+ \gamma_J ( mq+ \sum _t \varphi _{q,q'_t} (h_t)) \\
 & = f+ \gamma_{J'} (x).
 \end{align*}
Thus, it remains to prove (\ref{eq:palante-patras-1}). Assume that $q_t'$ is a free prime, so that $h_t= q_t'$. Let $i$ be the index such that 
$g(i)= \varphi_{q,q_t'} (q_t')$ and $v(i) = v^{q'_t}= \gamma _{J'}(q_t') $. Then $\mathcal{R}$(4) gives again
$$v_i= v_i + f - \gamma _{J'}( \varphi _{p,q_t'} (q_t') ) +  \gamma _{J'}(q_t') .$$
Since $\varphi_{p,q_t'}(q_t') = \widehat{\varphi_{p,q}}\varphi_{q,q_t'} (q_t')$, we get 
$$\gamma_{J'} (\widehat{\varphi_{p,q}}\varphi_{q,q_t'} (q_t')) = f+ \gamma_{J'} (q_t') , $$
as desired. The case where $q_t'$ is a regular prime and $h_t$ is a canonical semigroup generator of $G_{q'_t}$ is treated in the same way.

This shows that the defining relations of $M(J')$ are preserved. So, the map $\gamma_{J'}$ is a well-defined homomorphism from $M(J')$ to $M(E_{J'})$.
To build the inverse $\delta_{J'}$ of $\gamma_{J'}$, we follow the idea in the proof of Proposition \ref{L:represmaps}. The image by $\delta _{J'}$
of the vertices in $E_J$ is determined by the inverse of $\gamma_J$.  
We define $\delta_{J'} (v^p_i)= \mathbf{x}^p_i$ for $1\le i \le N_p+1$, and 
$\delta_{J'} ((v^p)_i^j) = p$. It is easily checked that $\delta_{J'}$ preserves the defining relations of $M(E_{J'})$, and so it gives a well-defined 
homomorphism from $M(E_{J'})$ to $M(J')$, which is clearly the inverse of $\gamma_{J'}$.

(ii) \underline{$p$ is a free prime}: In this case, the canonical map  $ G(\varphi_p) \colon \widetilde{G}_{J_p} \to G_p$ is an almost isomorphism by hypothesis. 
Hence, $G(\varphi _p)$ is surjective and its kernel is generated by a strictly positive element $x$. Note that $J_p$ is a lower subset of $J$. 

By Lemma \ref{lem:positive-in-G(MJ)}, we can write
$$x= \sum _{i=1}^r \chi_{q_i}(n_i, g_i) + \sum _{i=r+1}^n \chi _{q_i} (g_i)$$ 
for some $n_i \in \N$, $i=1,\dots , r$,  and  $g_i\in G_{q_i}$, $i=1,\dots, n$.
By Lemma \ref{lem:Auxiliar1}, for $i\in \{ 1,\dots , r \}$ we can write
$$ g_i = \sum _{q'<q_i , q'\in I_{{\rm free}}} a_{i}^{q'} \varphi_{q_i, q'} (q') + \sum _{q''<q_i, q''\in I_{{\rm reg}}}  \sum _{j=1}^{N_{q''}+1} b_{ji}^{q''} \varphi_{q_i, q''} (\mathbf{x}^{q''}_j)  $$
for some non-negative integers $a_{i}^{q'}$ and $b^{q''}_{ji}$, and 
we can write, for $i\in \{ r+1, \dots , n \}$, 
$$ g_i = \sum _{j=1}^{N_{q_i} +1} a_{ji} \mathbf{x}_j^{q_i} , $$
for some non-negative integers $a_{ji}$.
Define the graph $E_{J'}$ with $E_{J'}^0 = E_{J}^0 \sqcup \{ v^p \}$, and with $E_{J'}^1$ the union of $E_J^1$ and a set of edges starting at $v^p$, which are determined by the following
formula:
\begin{equation}
\label{eq:arrows-for-free} 
v^p = v^p + \sum _{i=1} ^r n_i v^{q_i} + \sum_{i=1}^r ( A_i+ B_i) + \sum _{i={r+1}}^n C_i ,
\end{equation}
where 
\begin{equation}
\label{eq:arrows-for-free(2)}
A_i = \sum _{q'<q_i , q'\in I_{{\rm free}}} a_{i}^{q'} v^{q'}, \quad B_i = \sum _{q''<q_i, q''\in I_{{\rm reg}}}  \sum _{j=1}^{N_{q''}+1} b_{ji}^{q''} v^{q''}_j,\qquad (i=1,\dots , r)
\end{equation}
and 
\begin{equation}
 \label{eq:arrows-for-free(3)}
 C_i = \sum _{j=1}^{N_{q_i}+1} a_{ji} v^{q_i}_j, \qquad (i=r+1, \dots, n). 
\end{equation}
If we define $\widehat{x}:= \sum _{i=1} ^r n_i v^{q_i} + \sum_{i=1}^r ( A_i+ B_i) + \sum _{i={r+1}}^n C_i $, then we have that $\gamma_J(x)=\widehat{x}$. 

The element $v_p$ of $M(E_{J'})$ is a free prime, and so $M(E_{J'})[v_p] = \N \times G'_{v_p}$ for some abelian group $G'_{v_p}$.
It follows easily from the induction hypothesis and the form of the relation (\ref{eq:arrows-for-free}) that the map $\gamma _J|_{J}$ extends to an order-isomorphism
from $J'= J\sqcup \{ p \}$ to the set $\mathbb P$ of primes of $M(E_{J'})$, by sending $p$ to $v^p$. 
Since $M(E_{J'})$ is a finitely generated conical refinement monoid, the map $\phi_p : M(E_{J'})_{\gamma _J(J_p)} \to G'_{v_p}$ induced by the various
semigroup homomorphisms 
$$
\begin{array}{crcl}
\phi_q^p\colon & M(E_{J'})_{\gamma _J (q)}  & \rightarrow   & G'_{v^p}  \\
 & y & \mapsto  &   (v^p + y ) - v^p
\end{array}
$$ 
for $q<p$ is surjective. So, we obtain a surjective group homomorphism
$$G(\phi _p ) \colon  G(M(E_{J'})_{\gamma_J(J_p)}) \to G'_{v^p}.$$
In order to somewhat simplify the notation, we will write $M(E_{J'})_{J_p}$ instead of $M(E_{J'})_{\gamma _J(J_p)}$. 

Since $E_J^0$ is a hereditary and saturated subset of $E_{J'}^0$, the order ideal $\frak I (E_J^0)$ of $M(E_{J'})$ generated by
$E_J^0$ coincides with the monoid $M(E_J)$ (by Lemma \ref{lem:Her}), and the component $M(E_{J'})_{J_p}$ coincides with the component
$M(E_J)_{J_p}$. 
The monoid isomorphism $\gamma _J \colon M(J) \to M(E_J)$ restricts to a semigroup isomorphism 
$M_{J_p} \to M(E_J)_{J_p}$, which induces a group isomorphism
$$\widetilde{\gamma}_{J_p} \colon G(M_{J_p}) \to G(M(E_{J})_{J_p})$$
of the Grothendieck groups. 
Set $K:= \ker (G(\phi_p ))$, and notice that the relation (\ref{eq:arrows-for-free}) 
implies that $\widetilde{\gamma}_{J_p}  (x)=\widehat{x} \in K $.  

Hence, there is a commutative diagram with exact rows
\begin{equation}\label{diagram:cap-avall}
\begin{CD}
0 @>>> \langle x \rangle  @>>> G(M_{J_p}) @>{G(\varphi_p )}>> G_p @>>> 0 \\
 & &    @VVV  @VV{\widetilde{\gamma}_{J_p}}V  @VV{ \gamma_ p}V \\
0 @>>> K @>>> G(M(E_J)_{J_p}) @>{G(\phi_p )}>> G'_{v_p} @>>> 0 \, \, ,
\end{CD}
 \end{equation}
where $\gamma _p \colon G_p \to G'_{v_p}$ is the map induced from the cokernel of the inclusion $\langle x \rangle \hookrightarrow G(M_J)$ to the cokernel of the inclusion 
$K \hookrightarrow G(M(E_J)_{J_p})$. Notice that $\gamma_p$ is an onto map.

Now, we define a map
$$ \gamma _{J'} \colon M(J') \to M(E_{J'})$$
extending the monoid isomorphism $\gamma _J: M(J)\rightarrow M(E_J)$, and defining $\gamma_{J'}$ on the component $M_p\cong \N\times G_p$ of $M(J')$ by the formula 
$$\gamma_{J'}( mp + g) = m v^p + \gamma_p (g) , $$
for $m\in \N$ and $g\in G_p$. 
By \cite[Corollary 1.6]{AP}, to show that $\gamma_{J'}$ is a well-defined monoid homomorphism, it suffices  to show that
if $q< p$ and $y\in \AC{M}{q}=M_q$ then $\gamma _{J'}(y) + \gamma _{J'}(p) = \gamma_{J'}(\varphi_{p,q}(y) + p)$, that is,  
$\gamma _{J}(y) + v^p = \gamma_{p}(\varphi_{p,q}(y))+ v^p$. As $y\in \AC{M}{q}= M_q$ and we are identifying $G(M(J_p))\cong G(M_{J_p})=\widetilde{G}_{J_p}$ (Lemma \ref{lem:GrotofM}), 
there exists a composition map 
$$\tau _q\colon M_q \to M(J_p) \to G(M(J_p)) \cong G(M_{J_p})$$ 
such that $\varphi_{p,q} = G(\varphi _p) \circ \tau _q$. Analogously, we have a map 
$$\tau_{\gamma_J(q)} \colon M(E_{J'})_{\gamma_J(q)} \to  G(M(E_{J'})_{J_p}) = G(M(E_{J})_{J_p})$$ 
such that  $\phi^{v^p}_{\gamma_J(q)} = G(\phi_p ) \circ \tau _{\gamma_J(q)}$, and clearly
 $\widetilde{\gamma} _{J_p} \circ \tau_q = \tau_{\gamma _J (q)} \circ \gamma_J|_{M_q} $. 
 
Using this fact, and the commutativity of (\ref{diagram:cap-avall}), we have that
\begin{align*}
 \gamma _p (\varphi_{p,q} (y)) + v^p & = \gamma _p (G(\varphi _p) (\tau_q  (y))) + v^p \\
 &  =  G(\phi_p) ( \widetilde{\gamma} _{J_p} (\tau _q (y))) + v^p \\
& =  G(\phi_p) ( \tau_{\gamma_J (q)} (\gamma_J(y))) + v^p \\
& = \phi^{v^p}_{ \gamma_J(q)} (\gamma _J(y)) + v^p  \\
 & = ((v^p + \gamma_J (y) )-v^p) + v^p  \\
 & = v^p + \gamma _J (y) \, ,
\end{align*}
as desired. 

This shows that there is a well-defined monoid homomorphism $\gamma_{J'} \colon M(J') \to M(E_{J'})$ sending the canonical semigroup generators of $M(J')$ to the corresponding canonical 
sets of vertices
seen in $M(E_{J'})$. In particular, $\gamma_{J'} $ is an onto map. 

In order to prove the injectivity of $\gamma_{J'} $, we can build an inverse map $\delta_{J'}:  M(E_{J'}) \to M(J')$, 
as follows.  On $M(E_{J})$ we define $\delta_{J'}$ to be $\gamma_J^{-1}$, while $\delta_{J'}(v_p):=p$. Notice that the only 
relation on $M(E_{J'})$ not occurring already in $M(E_J)$ is $v^p=v^p+\widehat{x}$, where $\gamma_J(x)=\widehat{x}$. Thus, $\delta_{J'}(\widehat{x})=x$. 
But $x$ generates the kernel of the map
$$G(\varphi_p): \widetilde{G}_{J_p}\rightarrow G_p\hookrightarrow \widehat{G}_p=\Z\times G_p,$$
so that $(p+x)-p$ equals $0$ in $G_p$. Hence, the relation $p=p+x$ holds in $M(J')$. Thus, $\delta_{J'}$ is a well-defined monoid homomorphism, and it is the inverse of $\gamma_{J'}$. This completes the proof of the inductive step, and so the result holds, as desired.
\end{proof}

As an illustration of Theorem \ref{thm:main}, we obtain the characterization of antisymmetric finitely generated graph monoids \cite{APW08}.

\begin{corol} \cite[Theorem 5.1]{APW08}
 \label{cor:APW}
 Let $M$ be a finitely generated antisymmetric refinement monoid. Then $M$ is a graph monoid if and only if for each free prime $p$ in $M$ the lower cover of $p$ contains at most one free prime.
 \end{corol}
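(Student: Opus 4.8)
The plan is to deduce this from Theorem \ref{thm:main} by unwinding what the ``almost isomorphism'' condition says in the antisymmetric case. First I would observe that since $M$ is antisymmetric, every archimedean component is trivial as a group: for a regular prime $e$ the group $G_e$ is the group $G_M[e] = \{x : e \le x \le e\}$, which by antisymmetry is just $\{e\}$; and for a free prime $p$, the group $G_p$ (a subgroup of $G(M_p)$ built from $\{p+\alpha : p+\alpha \le p\}$, again forced to be trivial by antisymmetry) is the trivial group. So for every $i \in I$ we have $G_i = 0$, and $M$ is precisely the primitive monoid associated to the poset $I$ with all primes free (if there were a regular prime, it would be an idempotent, and antisymmetry plus conicity would make it comparable structure, but in any case the statement refers to free primes only—here all primes are free since no nontrivial regular primes can occur in a nonzero antisymmetric conical monoid).

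Next I would compute $\widetilde{G}_{J_p}$ for a free prime $p$ with lower cover $\rL(I,p) = \{q_1,\dots,q_n\}$. Since all the groups $G_{q_i}$ are trivial, and $J_p$ is the lower set generated by the $q_i$, the key point is to identify $G(M_{J_p}) = \widetilde{G}_{J_p}$. Using Lemma \ref{lem:GrotIdeals} and the construction of $M_{J_p}$, with $\Ma(J_p) = \{q_1,\dots,q_n\}$ and each $q_i$ free with trivial group, one gets $M_{J_p} \cong \bigoplus_{i=1}^n \N$ (the $U_{J_p}$ subgroup is trivial because the transition maps $\widehat\varphi_{q_j q_i}$ between incomparable maximal elements don't contribute, and all groups are trivial), hence $\widetilde{G}_{J_p} = G(M_{J_p}) \cong \Z^n$. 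Meanwhile $G_p = 0$, so the map $G(\varphi_p)\colon \Z^n \to 0$ is automatically surjective, and its kernel is all of $\Z^n$. By Definition \ref{def:almost-iso}, $G(\varphi_p)$ is an almost isomorphism if and only if this kernel $\Z^n$ is cyclic, i.e.\ if and only if $n \le 1$.

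Finally I would assemble: $M$ is a graph monoid $\iff$ for every free prime $p$, $G(\varphi_p)$ is an almost isomorphism (Theorem \ref{thm:main}) $\iff$ for every free prime $p$, the lower cover $\rL(I,p)$ has at most one element $\iff$ (since in the antisymmetric case all primes in the lower cover are free) the lower cover of each free prime contains at most one free prime. The one subtlety worth stating carefully is the identification $\widetilde{G}_{J_p} \cong \Z^n$; I would double-check that the congruence $\sim$ defining $M_{J_p}$ is trivial here, which follows from Lemma \ref{lem:equivrels} since the generating pairs $(x+\chi(a,i,\alpha), x+\chi(a,j,\varphi_{ji}(\alpha)))$ require $i < j$ in $\Ma(J_p)$, but the elements of $\Ma(J_p) = \{q_1,\dots,q_n\}$ are pairwise incomparable, so no nontrivial identifications arise. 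That is the only place any real verification is needed; everything else is direct substitution into Theorem \ref{thm:main}. The main obstacle, such as it is, is simply being careful that ``at most one prime in the lower cover'' and ``at most one \emph{free} prime in the lower cover'' coincide here, which they do because antisymmetry forces all primes to be free.
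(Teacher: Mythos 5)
Your overall strategy --- specializing Theorem \ref{thm:main} by computing $\widetilde{G}_{J_p}$ and $G_p$ in the antisymmetric case --- is exactly the paper's, but there is a genuine error in the execution: the claim that ``no nontrivial regular primes can occur in a nonzero antisymmetric conical monoid'' is false. Regular primes in an antisymmetric refinement monoid are precisely the prime idempotents, and these certainly occur: the monoid $M=\Z^+\cup\{\infty\}=\langle p,a\mid a=2a,\ a=a+p\rangle$ of Section \ref{Sect:Example} is antisymmetric (it is a primitive monoid), and $a$ is a regular prime. Antisymmetry forces every group $G_q$ to be trivial, but it does not force every prime to be free. Consequently the lower cover $\rL(I,p)=\{q_1,\dots,q_n\}$ of a free prime $p$ may contain regular primes, and your identification $M_{J_p}\cong\bigoplus_{i=1}^n\N$ fails whenever it does: a regular $q_i$ contributes $M_{q_i}=G_{q_i}=\{0\}$, i.e.\ nothing, so the correct computation is $M_{J_p}\cong\N^r$ and $\widetilde{G}_{J_p}\cong\Z^r$, where $r$ is the number of \emph{free} primes in the lower cover. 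As written, your argument would declare a monoid whose free prime $p$ has lower cover consisting of one free and one regular prime to be a non-graph monoid (kernel $\Z^2$, non-cyclic), whereas the correct kernel is $\Z$ and such a monoid does satisfy the criterion. Your two mistakes (all primes are free; hence $n$ counts everything) cancel in the final sentence, but the equivalence ``at most one prime in the lower cover $\iff$ at most one free prime in the lower cover'' that you invoke there is simply not available.

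The repair is short and is what the paper does: since all $G_q$ are trivial, the regular members of $J_p$ contribute nothing to $\widehat{H}_{J_p}$, the subgroup $U_{J_p}$ kills the coordinates of the non-maximal free elements of $J_p$, and one is left with $\widetilde{G}_{J_p}\cong\Z^r$ mapping onto $G_p=0$; the kernel $\Z^r$ is cyclic, and generated by a strictly positive element (namely the class of $\sum_{q\in\rL(I,p)}q$), if and only if $r\le 1$. Note also that Definition \ref{def:almost-iso} requires not merely that the kernel be cyclic but that it be generated by an element of $\widetilde{G}_{J_p}^{++}$; for $r=1$ this holds because $1\in\Z$ is the image of $q_1\in M_{J_p}$, a point you should state rather than only checking cyclicity.
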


\begin{proof}
 Note that in the antisymmetric case we have that the groups $G_q$ are trivial for all primes $q$. Consequently the maps $\varphi _{p,q}$, for $q<p$ are all the zero map.
 
 Let $p$ be a free prime of $M$. If $q_1,\dots , q_r$ are the free primes in the lower cover of $p$, then the component $M_ {J_p}$ is isomorphic to $\N ^r$ and thus 
 $G(M_{J_p}) = \Z^r$. The canonical map 
 $$G(\varphi_p) \colon G(M_{J_p}) \longrightarrow G_p$$ reduces to the trivial map $\Z^r\longrightarrow  0$. If $r>1$ the kernel of this map cannot be cyclic. 
 Conversely if $r\le 1$ the kernel of this map is generated by a strictly positive element coming from $\sum_{q\in \rL (I,p)} q$. 
 \end{proof}

\section{Concluding remarks}

We finish the paper with a short overview of the current status of the realization problem for von Neumann regular rings, with special emphasis on the impact of the present paper
in this program.

After the work done in this paper, we obtain the realization of a large amount of finitely generated 
conical refinement monoids as graph monoids, and thus, using the results in \cite{AB}, we can realize those monoids as $\mathcal V$-monoids of 
suitable von Neumann regular rings (see Corollary \ref{cor:realization}). Moreover, we understand the reason why some finitely generated conical 
refinement monoids 
are not graph monoids. In order to obtain a general realization result for all finitely generated conical refinement monoids, it would be helpful to find a 
more comprehensive class of algebras, extending the class of Leavitt path algebras, which should include in particular the class of algebras considered in \cite{Aposet}
associated to finite posets. A natural candidate for such a class could be the class of Kumjian-Pask algebras associated to higher rank graphs \cite{ACHR}, or some variant
of it.  

In \cite{AGSemigForum}, the first-named author and Ken Goodearl have introduced a fundamental distinction between refinement monoids.
A refinement monoid $M$ is said to be {\it tame} in case it is the direct limit of a directed system of finitely generated refinement monoids, 
and $M$ is said to be {\it wild} otherwise. Tame refinement monoids inherit from the finitely generated refinement monoids which approximate them  
good cancellation and decomposition properties, and include large classes of refinement monoids (see \cite[Section 3]{AGSemigForum}).
Every graph monoid $M(E)$ of an arbitrary graph $E$ is tame (\cite[Theorem 4.1]{AGSemigForum}).  
Also, every primely generated conical refinement monoid is tame (\cite[Theorem 0.1]{AP}), but not every graph monoid of a row-finite graph
is primely generated (see \cite[p. 171]{AMFP}). This raises the question of determining those tame conical refinement monoids which are graph monoids.
Even assuming that the monoids are countable, the techniques introduced in the present paper are not enough for this purpose, a first difficulty 
(but not the only one) is that we cannot assume that the monoids are primely generated. 

For the monoid $M:=\langle p, a, b : p=p+a,\,  p=p+b\rangle $ considered in Example \ref{Exam:NoGraphMonoid}, it was proven in \cite[Theorem 4.1]{APW08} that
$M$ is not even a direct limit of graph monoids. However, $M$ has been realized as the $\mathcal V$-monoid of a von Neumann regular $K$-algebra for any field $K$,
see \cite[Example 3.3]{Aposet}. 

We present here some examples of conical refinement monoids which can be shown to be graph monoids by using our results.

\begin{prop}
 \label{finite-monoids} 
 Let $M$ be a conical finite refinement monoid. Then $M$ is a graph monoid.  
 \end{prop}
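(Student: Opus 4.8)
The plan is to reduce the statement to results already proved in the paper, by showing that a conical finite refinement monoid has no free primes at all. First I would observe that a finite monoid is trivially finitely generated, so by Brookfield's theorem \cite[Corollary 6.8]{Brook} it is primely generated, and in particular separative by \cite[Theorem 4.5]{Brook}; hence every element of $M$ is either free or regular.

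Next I would argue that $M$ contains no free element. Indeed, given $x\in M$, the elements $x,2x,3x,\dots$ cannot all be distinct, as $M$ is finite, so $nx=mx$ for some $n<m$. Then $mx\le nx$ while $m>n$, contradicting the freeness of $x$. Therefore every element of $M$ is regular, that is, $M$ is a finitely generated conical regular refinement monoid.

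It then suffices to invoke Theorem \ref{Th:FinGenRegRefAreRepr}, which asserts precisely that every finitely generated conical regular refinement monoid is isomorphic to $M(E)$ for some countable row-finite graph $E$. Alternatively one can appeal directly to Theorem \ref{thm:main}: since $M$ has no free primes, the hypothesis that $G(\varphi_p)$ be an almost isomorphism for every free prime $p$ holds vacuously, so $M$ is a graph monoid.

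The only real content is the finiteness argument ruling out free elements; once that is in place the conclusion is immediate, so there is no serious obstacle.
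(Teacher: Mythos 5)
Your proposal is correct and follows essentially the same route as the paper: both reduce to showing that finiteness forces $M$ to be a regular monoid and then invoke Theorem \ref{Th:FinGenRegRefAreRepr}. Your explicit pigeonhole argument ruling out free elements just spells out what the paper leaves implicit.
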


\begin{proof}
 Every prime of $M$ is necessarily regular, so $M$ is regular and the result follows from Theorem \ref{Th:FinGenRegRefAreRepr}. 
 \end{proof}

 \begin{exem}
  \label{exam:free} {\rm We consider a family of $I$-systems (where $I$ is a fixed poset) closely related to the ones introduced in Example \ref{exam:Easy-I-system}.  
  Let $$I= \{p,q_1,q_2,\dots , q_r \},$$ where $p>q_i$ for all $i$, and all $q_i$ are pairwise incomparable.  
We set $I= I_{{\rm free}}$. Since $q_i$ are minimal free primes we must have $G_{q_i}=\{e_{q_i}\}$.  
If $r=1$, then $G_p$ must be a finite cyclic group (Example \ref{exam:Easy-I-system}(1)), and 
the criteria in Theorem \ref{thm:main} are then satisfied, so $M(\mathcal J)$ is a graph monoid. If $r>1$, then we see from Theorem \ref{thm:main} that $M(\mathcal J)$ is a graph monoid 
if and only if $G_p$ is of the form $\Z^r/\langle (n_1,n_2,\dots , n_r)\rangle $, where $n_i$ are strictly positive integers for all $i$, and the maps $\varphi_{p,q_i}$ are the obvious maps
$\N \to \Z^r/\langle (n_1,n_2,\dots , n_r)\rangle $ sending the generator of $\N$ to the image of the canonical basis vector $e_i$. In terms of the examples given in Example \ref{exam:Easy-I-system}(2), we see that 
they give rise to a graph monoid if and only if $s=r-1$.} 
\end{exem}

 Finally we want to add some words about the situation regarding the realization problem for wild refinement monoids. The 
{\it abelianized Leavitt path algebras} $L^{{\rm ab}}(E,C)$ associated to separated graphs $(E,C)$ (see \cite{AE}, and also \cite{AG12})
have the property that their monoids $\mathcal V (L^{{\rm ab}}(E,C))$ are refinement monoids which may lack any possible cancellation or order-cancellation property 
which can fail in a conical abelian monoid. In particular, the monoids $\mathcal V (L^{{\rm ab}}(E,C))$ are often wild refinement monoids.
Some particular examples of this sort have been examined in \cite{AGtrans}, showing that they can be realized by exchange $K$-algebras (\cite[Theorem 4.10]{AGtrans}) for an arbitrary field $K$, and by 
von Neumann regular $K$-algebras exactly when the field is countable (\cite[Theorem 5.5]{AGtrans}).  This shows that there are evident differences with the case of tame refinement monoids, 
and suggests that different techniques have to be developed to deal with the case of wild refinement monoids.


\section*{Acknowledgments}

Part of this work was done during visits of the second author to the
Departament de Matem\`atiques de la Universitat Aut\`onoma de Barcelona (Spain). The second author thanks the center for its kind hospitality. 
Both authors thank Kevin O'Meara and the referee for various suggestions which helped to improve the original version of this paper.


\end{document}